\newtheorem{theo}{Theorem}
\newtheorem{prop}{Proposition}[section]
\newtheorem{defi}{Definition}[section]
\numberwithin{equation}{section}
\DeclareMathOperator{\Real}{Re}
\DeclareMathOperator{\Imag}{Im}
\DeclareMathOperator{\loc}{loc}
\DeclareMathOperator{\comp}{comp}
\DeclareMathOperator{\const}{const}
\DeclareMathOperator{\supp}{supp}
\DeclareMathOperator{\Hol}{Hol}
\DeclareMathOperator{\Div}{div}
\DeclareMathOperator{\Vol}{Vol}
\DeclareMathOperator{\sgn}{sgn}
\title[Quasi-normal modes for Kerr--de Sitter]%
{Quasi-normal modes and exponential energy decay\\
for the Kerr-de Sitter black hole}
\author{Semyon Dyatlov}
\email{dyatlov@math.berkeley.edu}
\address{Department of Mathematics, Evans Hall, University of California,
Berkeley, CA 94720, USA}
\begin{document}

\begin{abstract}
We provide a rigorous definition of quasi-normal modes for a rotating
black hole.  They are given by the poles of a certain meromorphic
family of operators and agree with the heuristic definition in the
physics literature.  If the black hole rotates slowly enough, we show
that these poles form a discrete subset of $\mathbb C$. As an
application we prove that the local energy of linear waves in that
background decays exponentially once orthogonality to the zero
resonance is imposed.
\end{abstract}

\maketitle

%%%%%%%%%%%%%%%%%%%%%%%%%%%%%%%%%%%%%%%%%%%%%%%%%%%%%%%%%%%%%%%%%%%%%%%%%%%%%%%%
%                                 INTRODUCTION                                 %
%%%%%%%%%%%%%%%%%%%%%%%%%%%%%%%%%%%%%%%%%%%%%%%%%%%%%%%%%%%%%%%%%%%%%%%%%%%%%%%%

Quasi-normal modes are the complex frequencies appearing in expansions
of waves; their real part corresponds to the rate of oscillation and
the nonpositive imaginary part, to the rate of decay.  According to the physics
literature~\cite{k-s} they are expected to appear in
gravitational waves caused by perturbations of black holes (for more
recent references and findings, see for example~\cite{b-c-s,k-z-1,k-z-2}).
In the mathematics literature they were studied by Bachelot and
Motet-Bachelot~\cite{b-1,b-2,b-3} and S\'a Barreto and
Zworski~\cite{sb-z}, who applied the methods of scattering theory and
semiclassical analysis to the case of a spherically symmetric black
hole.  Quasi-normal modes were described in~\cite{sb-z} as
\textbf{resonances}; that is, poles of the meromorphic continuation of
a certain family of operators; it was also proved that these poles
asymptotically lie on a lattice. This was further developed by Bony
and H\"afner in~\cite{b-h}, who established an expansion of the
solutions of the wave equation in terms of resonant states.
As a byproduct of this result, they obtained exponential decay
of local energy for Schwarzschild--de Sitter. Melrose, S\'a Barreto,
and Vasy~\cite{m-sb-v} have extended this result to more general manifolds
and more general initial data.

In this paper, we employ different methods to define quasi-normal
modes for the Kerr--de Sitter rotating black hole. As
in~\cite{sb-z} and~\cite{b-h}, we use the de Sitter model; physically,
this corresponds to a positive cosmological constant; mathematically,
it replaces asymptotically Euclidean spatial infinity with an asymptotically
hyperbolic one.
Let $P_g(\omega)$, $\omega\in \mathbb C$, be the stationary
d'Alembert--Beltrami operator of the Kerr--de Sitter metric (see
Section~1 for details). It acts on functions
on the space slice $M=(r_-,r_+)\times \mathbb S^2$.
We define quasi-normal modes as poles of a
certain (right) inverse $R_g(\omega)$ to $P_g(\omega)$. Because of the
cylindrical symmetry of the operator $P_g(\omega)$, it leaves
invariant the space $\mathcal D'_k$ of distributions with fixed
angular momentum $k\in \mathbb Z$ (with respect to the axis of
rotation); the inverse $R_g(\omega)$ on $\mathcal D'_k$ is constructed
by
\begin{theo}\label{t:r-g-omega-k}
Let $P_g(\omega,k)$ be the restriction of $P_g(\omega)$ to
$\mathcal D'_k$. Then there exists a family of operators
$$
R_g(\omega,k):
L^2_{\comp}(M)\cap \mathcal D'_k
\to H^2_{\loc}(M)\cap \mathcal D'_k
$$
meromorphic in $\omega\in \mathbb C$ with poles of finite rank and
such that $P_g(\omega,k)R_g(\omega,k)f=f$ for each $f\in
L^2_{\comp}(M)\cap \mathcal D'_k$.
\end{theo}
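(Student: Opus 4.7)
The plan is to build $R_g(\omega,k)$ by constructing local inverses near each of the two horizons, combining them with an interior parametrix (valid since $P_g(\omega,k)$ is elliptic away from the horizons), and then invoking analytic Fredholm theory.

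First, near each horizon $r = r_\pm$, I would pass to a coordinate that resolves the degeneration — for instance a Regge-Wheeler-type variable such as $x = \log(r - r_-)$ at the event horizon and analogously at the cosmological one — and analyze the operator as a regular singular radial operator whose coefficients are operators in the angular variables. The indicial roots at each horizon are explicitly computable in terms of $\omega$, $k$, the surface gravity $\kappa_\pm$, and the horizon angular velocity $\Omega_\pm$. Selecting the \emph{outgoing} indicial root — the one giving solutions that decay as $r \to r_\pm$ for $\Im\omega \gg 0$ — I would construct a local right inverse $R_\pm(\omega,k)$ mapping compactly supported $L^2$ data near $r_\pm$ to $H^2_{\loc}$ solutions of $P_g(\omega,k)u = f$. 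The outgoing condition can be enforced via a Mellin transform in the near-horizon variable or, as in S\'a Barreto–Zworski and Bony–H\"afner, by complex deformation (complex scaling) in the Regge-Wheeler coordinate, so that the resulting operator becomes Fredholm on the deformed contour.

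Second, on the interior region $[r_-+\varepsilon, r_+-\varepsilon]\times \mathbb S^2$, $P_g(\omega,k)$ is a uniformly elliptic second-order operator depending holomorphically on $\omega$, so a standard pseudodifferential parametrix gives an interior inverse modulo a compact remainder. Using a partition of unity I would glue the local inverses from the two horizons with the interior parametrix into a global right parametrix $\widetilde R_g(\omega,k):L^2_{\comp}(M)\cap\mathcal D'_k\to H^2_{\loc}(M)\cap\mathcal D'_k$ satisfying
\[
P_g(\omega,k)\widetilde R_g(\omega,k) = I + K(\omega,k),
\]
where $K(\omega,k)$ is a holomorphic family of compact operators (compactness coming from cut-off commutators supported in compact subsets of the interior and from the improved regularity of the near-horizon inverses). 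To close the argument I would verify that $I+K(\omega,k_0)$ is invertible at some $\omega_0$ with $\Im\omega_0$ large, using a pairing/energy estimate that exploits outgoing boundary conditions together with classical ellipticity of $P_g$ at large frequency. Analytic Fredholm theory then extends $(I+K(\omega,k))^{-1}$ meromorphically to all of $\mathbb C$ with poles of finite rank, and composing yields $R_g(\omega,k) = \widetilde R_g(\omega,k)(I+K(\omega,k))^{-1}$, a meromorphic right inverse with poles of finite rank.

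The hardest step will be the near-horizon construction. In the rotating case, frame dragging couples $\omega$ to the azimuthal mode $k$ through the horizon angular velocity $\Omega_\pm$, so the outgoing indicial root at each horizon is $\omega$- and $k$-dependent; one must verify that this choice is holomorphic in $\omega$, mutually compatible between the two horizons, and that the associated complex deformation remains elliptic on the deformed contour. Moreover, because restricting to $\mathcal D'_k$ fixes only the azimuthal momentum and does not diagonalize the full angular Laplacian (spheroidal harmonics depend on $\omega$), the near-horizon local inverses must be treated as operator-valued rather than scalar ODE inverses — this is where most of the technical work lies.
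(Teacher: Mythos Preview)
Your proposal outlines a genuinely different route from the paper's, and the contrast is instructive.

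The paper does \emph{not} build a parametrix by gluing near-horizon inverses to an interior elliptic parametrix and then invoke analytic Fredholm theory. Instead it exploits the full Carter/Teukolsky separability: on $\mathcal D'_k$ one has $P_g(\omega,k)=P_r(\omega,k)+P_\theta(\omega)$, where $P_r$ is an ordinary differential operator in $r$ alone and $P_\theta$ acts only in the angular variables. The paper then constructs one-dimensional resolvents $R_r(\omega,\lambda,k)=(P_r+\lambda)^{-1}$ (via the Regge--Wheeler substitution and explicit outgoing solutions $u_\pm$) and $R_\theta(\omega,\lambda)=(P_\theta-\lambda)^{-1}$, and defines
\[
R_g(\omega,k)=\frac{1}{2\pi i}\int_\gamma R_r(\omega,\lambda,k)\otimes R_\theta(\omega,\lambda)\,d\lambda
\]
over a suitable contour $\gamma$ separating the poles of the two factors. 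Meromorphy with finite-rank poles follows from an abstract two-variable analytic Fredholm/Weierstrass-preparation argument (Proposition~\ref{l:sep-mer}), and the right-inverse property is checked directly on a spanning set. Your remark that ``restricting to $\mathcal D'_k$ \dots\ does not diagonalize the full angular Laplacian'' is correct but beside the point: the paper never diagonalizes $P_\theta(\omega)$ (indeed it cannot, since $P_\theta(\omega)$ is non-self-adjoint for $\omega\notin\mathbb R$); the contour integral replaces the eigenfunction expansion.

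Your approach---local outgoing inverses near each horizon glued to an interior elliptic parametrix, then analytic Fredholm on $I+K(\omega,k)$---is in the spirit of Mazzeo--Melrose or Vasy-type constructions and could in principle be made to work for fixed $k$, since $P_g(\omega,k)$ is elliptic on $M$ once $D_\varphi$ is replaced by $k$. But two points are underspecified. First, you need a fixed Banach space on which $K(\omega,k)$ is a holomorphic family of \emph{compact} operators; $L^2_{\comp}$ and $H^2_{\loc}$ are not Banach spaces, so you must pass to the complex-scaled picture or to weighted spaces and check compactness there. Second, the ``operator-valued ODE'' near each horizon is exactly the step the paper sidesteps by separating $r$ from $\theta$: with separation it is a scalar regular-singular ODE with explicit indicial roots $\pm i A_\pm^{-1}\omega_\pm$, and the outgoing solution is written down directly. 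Without separation you must prove Fredholmness of the scaled operator uniformly in the angular variables, which is doable but substantially heavier. In short, your plan trades the paper's algebraic use of hidden symmetry for a more robust but more laborious PDE construction.
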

Since $R_g(\omega,k)$ is meromorphic, its poles, which we call
$k$-resonances, form a discrete set. One can then say that
$\omega\in \mathbb C$ is a resonance, or a quasi-normal mode, if
$\omega$ is a $k$-resonance for some $k\in \mathbb Z$.  However, it is
desirable to know that resonances form a discrete subset of $\mathbb
C$; that is, $k$-resonances for different $k$ do not accumulate near some
point. Also, one wants to construct the inverse $R_g(\omega)$ that
works for all values of $k$.
For $\delta_r>0$,\footnote{In this paper, the subscript in the constants
such as
$\delta_r$, $C_r$, $C_\theta$ does not mean that these constants
depend on the corresponding variables, such as $r$ or $\theta$; instead,
it indicates that they are related to these variables.} put
$$
K_r=(r_-+\delta_r,r_+-\delta_r),\
M_K=K_r\times \mathbb S^2,
$$
and let $1_{M_K}$ be the operator of multiplication
by the characteristic function of $M_K$ (which will, based on the context,
act $L^2(M_K)\to L^2(M)$ or $L^2(M)\to L^2(M_K)$).
Then we are able to construct $R_g(\omega)$ on $M_K$ for a slowly
rotating black hole:
\begin{theo}\label{t:r-g-omega}
Fix $\delta_r>0$. Then there exists $a_0>0$
such that if the rotation speed of the black hole satisfies $|a|<a_0$,
we have the following:

1. Every fixed compact set can only contain $k$-resonances for a
finite number of values of $k$.  Therefore, quasi-normal modes form a
discrete subset of $\mathbb C$.

2. The operators
$1_{M_K}R_g(\omega,k) 1_{M_K}$ define a family of operators
$$
R_g(\omega):L^2(M_K)\to H^2(M_K)
$$ 
such that $P_g(\omega)R_g(\omega)f=f$ on $M_K$ for
each $f\in L^2(M_K)$ and $R_g(\omega)$ is meromorphic
in $\omega\in \mathbb C$ with poles of finite rank.
\end{theo}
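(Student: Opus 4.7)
The strategy is to exploit angular momentum $k$ as a large parameter. For $|k|$ large, $P_g(\omega,k)$ is a semiclassical object with effective parameter $h=1/|k|$, and one can show that inside any fixed compact $\omega$-region the radial estimate is strong enough to rule out resonances and yield a uniform bound on $1_{M_K} R_g(\omega,k)1_{M_K}$. The two statements of the theorem then follow by combining this tail estimate with the meromorphic families from Theorem~\ref{t:r-g-omega-k} for the finitely many remaining values of $k$.

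\textbf{Step 1 (approximate separation).} Using the Carter-type ansatz for slowly rotating Kerr--de Sitter, I would write $P_g(\omega)$ in a form that nearly separates into a radial operator $D_r(\omega)$ on $(r_-,r_+)$ and an angular operator $D_\theta(\omega,k)$ on $\mathbb S^2$, with the coupling error of size $O(a)$. On $\mathcal D'_k$ the angular part has discrete spectrum $\{\lambda_l(\omega,k)\}_{l\ge |k|}$, and for $a$ small and $\omega$ ranging over a fixed compact set $\Omega\subset\mathbb C$ these eigenvalues satisfy $\lambda_l\ge c\,l(l+1)$, in particular $\lambda_l\ge c\,k^2$ for every $l\ge|k|$.

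\textbf{Step 2 (radial estimate for large $|k|$).} After expansion in angular modes, $P_g(\omega,k)$ reduces, mode-by-mode in $l\ge |k|$, to a one-dimensional radial problem whose effective potential is dominated by $\lambda_l(\omega,k)$. On the compact interval $K_r$, the radial coefficients are $O(1)$ uniformly for $\omega\in\Omega$, so the problem becomes a Schr\"odinger-type operator with potential $\ge c\,k^2$. Treating $h=|k|^{-1}$ as a semiclassical parameter, a standard energy (or Agmon) estimate produces an inverse $L^2(K_r)\to H^2(K_r)$ of norm $O(|k|^{-2})$, with no poles in $\Omega$. Summing over $l\ge |k|$ yields $1_{M_K}R_g(\omega,k)1_{M_K}$ holomorphic and uniformly bounded in $\omega\in\Omega$ for all $|k|\ge K_0(\Omega,\delta_r)$.

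\textbf{Step 3 (assembling $R_g(\omega)$).} Part~1 is immediate: only the finitely many $k$ with $|k|<K_0(\Omega,\delta_r)$ can contribute poles in $\Omega$, and each such $R_g(\omega,k)$ from Theorem~\ref{t:r-g-omega-k} has a discrete pole set, so quasi-normal modes form a discrete subset of $\mathbb C$. For Part~2, given $f\in L^2(M_K)$, I would expand $f=\sum_k f_k$ in angular modes and set
$$
R_g(\omega)f=\sum_k 1_{M_K} R_g(\omega,k)1_{M_K}f_k.
$$
The tail $|k|\ge K_0$ is a norm-convergent holomorphic series into $H^2(M_K)$ by Step~2; the head $|k|<K_0$ is a finite sum of meromorphic families with finite-rank poles. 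The identity $P_g(\omega)R_g(\omega)f=f$ on $M_K$ holds mode-by-mode since $P_g(\omega)$ preserves $\mathcal D'_k$ and acts locally.

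\textbf{Main obstacle.} The delicate step is Step~2. Trapping at the photon sphere persists for every $l$ and produces genuine resonances; the point is that their real parts scale linearly in $l$, so they leave any fixed compact set as $|k|\to\infty$. Making this quantitative uniformly for small $a$, while controlling the $O(a)$ error that spoils exact separation and the accumulation of angular modes $l\ge|k|$, is the principal technical issue.
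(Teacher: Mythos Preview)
Your overall architecture matches the paper's: reduce to a uniform large-$|k|$ estimate on $1_{M_K}R_g(\omega,k)1_{M_K}$ (this is Proposition~\ref{l:large-k-estimate}), then combine with the finitely many meromorphic families from Theorem~\ref{t:r-g-omega-k}. Step~3 is essentially what the paper does, and the $O(|k|^{-2})$ bound you aim for is exactly~\eqref{e:large-k-estimate}.

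Two points of divergence deserve comment. First, you propose expanding in angular eigenfunctions of $P_\theta(\omega)$. For complex $\omega$ this operator is not self-adjoint, and the paper explicitly avoids the eigenfunction expansion for this reason (see the Outline and Section~2), replacing it by the contour integral~\eqref{e:r-omega-int} in the separation constant $\lambda$. Your expansion can be justified for small $a$ and $\omega$ in a fixed compact set by analytic perturbation from the simple spectrum of $P_\theta(0)|_{\mathcal D'_k}$, but this needs an argument you have not supplied, and the contour-integral route is cleaner and more robust.

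Second, and more seriously, Step~2 underestimates the radial input. After separation, what must be controlled is $1_{K_r}R_r(\omega,\lambda,k)1_{K_r}$ for large $\lambda$ in the sector~\eqref{e:radial-cond}, where $R_r$ is the \emph{outgoing} radial resolvent of Proposition~\ref{l:radial}. A coercivity or Agmon argument on the compact interval $K_r$ produces \emph{an} inverse, but not this one: the outgoing condition is imposed at the event horizons $r=r_\pm$, not at $\partial K_r$, and for $\Imag\omega<0$ (which occurs in any compact $\omega$-set) the outgoing solutions grow exponentially, so integration by parts on the real line does not close. The paper's proof of~\eqref{e:radial-est} (Section~6) uses analyticity of the potential to deform to a complex contour on which $\Real\widetilde V_x>0$, together with a microlocal argument (``vertical control'', Proposition~\ref{l:vertical}) that forces $\Real(h\partial_x u/u)$ to have the correct sign at the deformed endpoints; only then does the integration by parts~\eqref{e:ibp} yield the bound. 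This is the main technical content behind Theorem~\ref{t:r-g-omega} and is not a standard energy or Agmon estimate. Your ``Main obstacle'' paragraph correctly flags Step~2, but the mechanism you name (photon-sphere trapping escaping as $l\to\infty$) is not the issue in this regime---the rescaled potential $\widetilde V_0\approx\Delta_r$ is in fact nontrapping; the actual difficulty is controlling the outgoing boundary behavior at near-zero semiclassical frequency at the horizons.
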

As stated in Theorem~\ref{t:r-g-omega}, the operator $R_g(\omega)$ acts
only on functions supported in a certain compact subset of the space
slice $M$ depending on how small $a$ is. This is due to the fact that the
operator $P_g(\omega)$ is not elliptic inside the two
ergospheres located near the endpoints $r=r_\pm$.  The result
above can then be viewed as a construction of $R_g(\omega)$ away from
the ergospheres. However, for fixed angular momentum we are able
to obtain certain boundary conditions on the elements in
the image of $R_g(\omega,k)$, as well as on resonant states:
\begin{theo}\label{t:global-outgoing} Let $\omega\in \mathbb C$.

1. Assume that $\omega$ is not a resonance. Take
$f\in L^2_{\comp}(M)\cap \mathcal D'_k$ for some $k\in \mathbb Z$
and put $u=R_g(\omega,k)f\in H^2_{\loc}(M)$.
Then $u$ is \textbf{outgoing} in the following sense: the functions  
$$
v_\pm(r,\theta,\varphi)=|r-r_\pm|^{iA_\pm^{-1}(1+\alpha)(r_\pm^2+a^2)\omega}
u(r,\theta,\varphi-A_\pm^{-1}(1+\alpha)a\ln|r-r_\pm|).
$$
are smooth near the event horizons $\{r_\pm\}\times \mathbb S^2$. 

2. Assume that $\omega$ is a resonance.
Then there exists a resonant state; i.e., a
nonzero solution $u\in C^\infty(M)$ to the
equation $P_g(\omega)u=0$ that is outgoing in the sense
of part 1.
\end{theo}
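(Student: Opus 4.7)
The plan is to reinterpret the outgoing condition as smoothness in \emph{Kerr-star} type coordinates that are regular across the horizons. Near $r=r_\pm$, define $t^*_\pm = t + T_\pm(r)$ and $\varphi^*_\pm = \varphi + A_\pm^{-1}(1+\alpha) a \ln|r-r_\pm|$, with $T_\pm(r) = A_\pm^{-1}(1+\alpha)(r_\pm^2+a^2)\ln|r-r_\pm| + (\text{smooth})$, chosen so that the Kerr--de Sitter metric extends analytically through $\{r=r_\pm\}$. Writing the stationary ansatz as $u = e^{-i\omega t^*_\pm} U^*(r,\theta,\varphi^*_\pm)$, a direct computation shows that $v_\pm$ as in the theorem equals $U^*$ multiplied by a factor smooth and nonvanishing at $r=r_\pm$; hence $v_\pm$ is smooth if and only if $U^*$ extends smoothly up to the horizon, i.e., if and only if $u$ extends smoothly in the analytic continuation of the metric. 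Under this change of variables, $P_g(\omega,k)$ conjugates to a second-order operator $P^*_g(\omega,k)$ whose coefficients are smooth across $\{r=r_\pm\}$.

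\textbf{Part~1.} I would trace the construction of $R_g(\omega,k)$ from Theorem~\ref{t:r-g-omega-k}: the meromorphic inverse is built by extending $P^*_g(\omega,k)$ to a region slightly past each horizon, applying Fredholm theory on the enlarged domain, and imposing complex absorption (or a similar mechanism) at the outer ends. Solutions so constructed are $H^2_{\loc}$ on the enlarged domain by elliptic regularity for $P^*_g(\omega,k)$. Restricting back to $M$ shows that $U^*$, and hence $v_\pm$, extends smoothly across $\{r=r_\pm\}$; this is precisely the outgoing property for $u=R_g(\omega,k)f$.

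\textbf{Part~2.} For a resonance $\omega_0$, Laurent-expand $R_g(\omega,k)=\sum_{j\ge -m} A_j(\omega-\omega_0)^j$. Since $P_g(\omega,k)$ is polynomial of degree two in $\omega$, matching the coefficient of $(\omega-\omega_0)^{-m}$ in $P_g(\omega,k)R_g(\omega,k)=I$ gives $P_g(\omega_0,k)A_{-m}=0$; as $\omega_0$ is a pole, $A_{-m}\neq 0$, so any nonzero $u\in \mathrm{Range}(A_{-m})$ is a resonant state. Outgoingness transfers to $A_{-m}$ via the contour-integral formula $A_{-m}=\tfrac{1}{2\pi i}\oint R_g(\omega,k)(\omega-\omega_0)^{m-1}\,d\omega$, since by Part~1 the integrand is outgoing and smoothness of $v_\pm$ is a closed condition. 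To obtain $u\in C^\infty(M)$ one combines interior elliptic regularity of $P_g(\omega_0,k)$ outside the ergospheres, smoothness of $v_\pm$ near the horizons, and either a hypoelliptic bootstrap or a propagation-of-singularities argument to cover the ergoregion, where $P_g(\omega_0,k)$ loses ellipticity.

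\textbf{The main obstacle} is Part~1: verifying that the Fredholm construction of $R_g(\omega,k)$ genuinely selects the smooth (outgoing) indicial branch at each horizon rather than the oscillatory (ingoing) one. The radial part of $P^*_g(\omega,k)$ has a regular singular point at $r=r_\pm$ whose two indicial exponents differ by $\pm 2iA_\pm^{-1}(1+\alpha)(r_\pm^2+a^2)\omega$, and the function spaces underlying the Fredholm theory must be calibrated so that only the smooth branch enters. Once this calibration is in place, both parts of the theorem follow as described.
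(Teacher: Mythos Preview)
Your proposal rests on a mischaracterization of how $R_g(\omega,k)$ is actually constructed in this paper, and that gap is fatal to Part~1. You write that ``the meromorphic inverse is built by extending $P^*_g(\omega,k)$ to a region slightly past each horizon, applying Fredholm theory on the enlarged domain, and imposing complex absorption.'' That is the Vasy-type approach developed later; here $R_g(\omega,k)$ is built by \emph{separation of variables}: one constructs a one-dimensional radial resolvent $R_r(\omega,\lambda,k)$ and an angular resolvent $R_\theta(\omega,\lambda)$, and sets
\[
R_g(\omega,k)=\frac{1}{2\pi i}\int_\gamma R_r(\omega,\lambda,k)\otimes R_\theta(\omega,\lambda)\,d\lambda
\]
over an admissible contour (Proposition~\ref{l:sep-mer}). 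There is no extended domain and no complex absorption, so your appeal to ``$H^2_{\loc}$ on the enlarged domain by elliptic regularity'' has nothing to attach to. The outgoing property instead comes from the \emph{explicit} construction of $R_r$ out of the outgoing solutions $u_\pm$ of Proposition~\ref{l:outgoing}, together with the quantitative decay estimate~\eqref{e:radial-est-1.5}, which is exactly what lets the $\lambda$-integral converge in $C^N$ near the horizons; one then bootstraps angular regularity using ellipticity of $P_\theta$. Without those radial estimates you cannot conclude that the integral defining $v_\pm$ lands in $C^\infty$.

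For Part~2 your Laurent-expansion argument is logically sound in principle, but it inherits the gap in Part~1 (you need outgoingness of $R_g(\omega,k)f$ uniformly near the pole), and your worry about the ergoregion is misplaced: restricted to $\mathcal D'_k$, $P_g(\omega,k)$ is genuinely elliptic on all of $M$ (see the end of the proof of Theorem~\ref{t:r-g-omega-k}), so interior $C^\infty$ regularity is immediate. The paper's Part~2 is much more direct and again exploits separation of variables: a pole of $R_g(\cdot,k)$ forces a common pole $\lambda$ of $R_r(\omega,\cdot,k)$ and $R_\theta(\omega,\cdot)|_{\mathcal D'_k}$, yielding $u_r$ with $(P_r+\lambda)u_r=0$ and $u_\theta$ with $(P_\theta-\lambda)u_\theta=0$; the tensor product $u=u_r\otimes u_\theta$ is the resonant state, and its outgoing behavior is read off from part~4 of Proposition~\ref{l:radial}.
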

The outgoing condition can be reformulated as follows. Consider the
function $U=e^{-i\omega t}u$ on the spacetime $\mathbb R\times M$;
then $u$ is outgoing if and only if $U$ is smooth up to the event
horizons in the extension of the metric given by the Kerr-star
coordinates $(t^*,r,\theta,\varphi^*)$ discussed in Section~1. This
lets us establish a relation between the wave equation on Kerr-de
Sitter and the family of operators $R_g(\omega)$
(Proposition~\ref{l:our-resolvent-relevance}). Note that here we do
not follow earlier applications of scattering theory
(including~\cite{b-h}), where spectral theory and in particular
self-adjointness of $P_g$ are used to define $R_g(\omega)$ for
$\Imag\omega>0$ and relate it to solutions of the wave equation via
Stone's formula. In the situation of the present paper, due to the
lack of ellipticity of $P_g(\omega)$ inside the ergospheres, it is
doubtful that $P_g$ can be made into a self-adjoint operator;
therefore, we construct $R_g(\omega)$ directly using separation of
variables, cite the theory of hyperbolic equations (see Section~1) for
well-posedness of the Cauchy problem for the wave equation, and prove
Proposition~\ref{l:our-resolvent-relevance} without any reference to
spectral theory.

We now study the distribution of resonances in the slowly rotating
Kerr--de Sitter case. First, we establish absense of nonzero
resonances in the closed upper half-plane:
\begin{theo}\label{t:upper-half-plane}
Fix $\delta_r>0$. Then there exist constants $a_0$ and
$C$ such that if $|a|<a_0$, then:

1. There are no resonances in the upper half-plane and
$$
\|R_g(\omega)\|_{L^2(M_K)\to L^2(M_K)}
\leq {C\over |\Imag\omega|^2},\ \Imag\omega>0.
$$

2. There are no resonances $\omega\in \mathbb R\setminus 0$ and
$$
R_g(\omega)={i(1 \otimes 1)\over 4\pi (1+\alpha)(r_+^2+r_-^2+2a^2)\omega}+\Hol(\omega),
$$
where $\Hol$ stands for a family of operators holomorphic at zero.
\end{theo}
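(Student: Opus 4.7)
\medskip
\noindent\textbf{Proof plan for Theorem~\ref{t:upper-half-plane}.}

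The plan is to deduce both statements from energy estimates for the wave equation on Kerr--de Sitter in the Kerr-star extension, applied to the outgoing objects produced by Theorem~\ref{t:global-outgoing}.

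For part 1 with $\Imag\omega>0$, if $\omega$ were a resonance, Theorem~\ref{t:global-outgoing}(2) produces a nonzero outgoing $u\in C^\infty(M)$ with $P_g(\omega)u=0$. Set $U(t^*,\cdot)=e^{-i\omega t^*}u$. The reformulation of the outgoing condition noted after Theorem~\ref{t:global-outgoing} guarantees $U$ extends smoothly across the event horizons and solves $\Box_g U=0$; the sign of $\Imag\omega$ forces the factors $|r-r_\pm|^{iA_\pm^{-1}(1+\alpha)(r_\pm^2+a^2)\omega}$ to decay as $r\to r_\pm$, so the Cauchy data $(u,-i\omega u)|_{t^*=0}$ has finite energy. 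But $\|U(t^*,\cdot)\|$ grows like $e^{\Imag\omega\, t^*}$, contradicting the uniform energy bounds for the wave equation on slowly rotating Kerr--de Sitter in the Kerr-star foliation invoked in Section~1. The same computation applied to $U=e^{-i\omega t^*}R_g(\omega)f$ gives the resolvent estimate: integrating the energy identity in $t^*$ and sending $t^*\to\infty$ produces two powers of $1/|\Imag\omega|$---one from controlling the source $F=e^{-i\omega t^*}f$ in time, the other from converting the first-order energy to an $L^2$ norm via $\partial_{t^*}U=-i\omega U$.

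For part 2 I would argue by boundary pairing at the event horizons. Given an outgoing solution $u$ of $P_g(\omega)u=0$ with $\omega\in\mathbb R\setminus\{0\}$, take $\Imag\langle P_g(\omega)u,u\rangle_{L^2(M_K)}$ and pass to the limit $\delta_r\to 0$; since the principal symbol of $P_g(\omega)$ is real, the bulk terms cancel and only flux integrals at $r=r_\pm$ survive. Re-reading the outgoing condition in Kerr-star coordinates identifies each flux with a weighted $L^2$-norm of the boundary trace of $U=e^{-i\omega t^*}u$ at the corresponding horizon; for $|a|<a_0$ these are nonnegative and vanish only if $u$ is constant near $r_\pm$, which together with $\omega\neq 0$ and unique continuation forces $u\equiv 0$. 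For the residue at $\omega=0$, the kernel of $P_g(0)$ is spanned by the constant function $1$ (only in the $k=0$ sector, since $P_g(0,k)$ is injective for $k\neq 0$), so a Gohberg--Sigal / rank-one analysis of the expansion $P_g(\omega,0)=P_g(0,0)+\omega Q_1+O(\omega^2)$ reduces the residue to the reciprocal of $\langle Q_1\cdot 1,1\rangle$; evaluating this pairing against the volume form of $M$ produces the announced coefficient $i/[4\pi(1+\alpha)(r_+^2+r_-^2+2a^2)]$.

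The main obstacle is the sign of the horizon flux in part 2, where superradiance on a rotating black hole can a priori reverse the sign of the naive Killing energy flux across $r=r_\pm$. The slow-rotation assumption $|a|<a_0$ enters essentially here: making the flux definite requires identifying the outgoing trace with the correct combination of $t^*$- and $\varphi^*$-derivatives through the indicial equation of $P_g(\omega)$ at each horizon, and then checking that for $|a|$ small the $t^*$-contribution dominates the $\varphi^*$ cross term. The same sign check underlies the uniform energy bound used in part 1 and the evaluation of $\langle Q_1\cdot 1,1\rangle$ in the residue calculation.
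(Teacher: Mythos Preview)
Your approach is genuinely different from the paper's, which works entirely inside the separation-of-variables framework (Propositions~\ref{l:angular}, \ref{l:radial}, \ref{l:large-k-estimate}, \ref{l:smart-contour}) and never invokes an energy estimate for the wave equation to exclude resonances. But there is a real gap in part~1.

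The only energy estimate available in Section~1 is Proposition~\ref{l:crude-energy-estimate}, which gives $E(t_2)\leq e^{C_e(t_2-t_1)}E(t_1)$. This \emph{allows} exponential growth at rate $C_e$; it is not a uniform bound. Hence your contradiction argument ``$\|U(t^*,\cdot)\|$ grows like $e^{\Imag\omega\, t^*}$'' only excludes resonances with $\Imag\omega>C_e$, not all of $\Imag\omega>0$. The paper uses exactly that weaker conclusion in Proposition~\ref{l:our-resolvent-relevance} (for $\Imag\omega>C_e$), and then needs the full machinery of Propositions~\ref{l:radial}(5) and~\ref{l:smart-contour} to push down to the real axis. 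The same problem afflicts your resolvent estimate: integrating the energy identity against $e^{-i\omega t^*}$ only converges for $\Imag\omega>C_e$, so you do not get a bound on all of $\{\Imag\omega>0\}$ this way. A genuine uniform-boundedness result on Kerr--de~Sitter is essentially equivalent to what you are trying to prove.

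For part~2 your boundary-pairing idea is close to what the paper does for the \emph{radial} operator (Proposition~\ref{l:radial}(3)): the Wronskian/flux argument yields $\omega_+\omega_-\leq 0$, i.e.\ $|\omega|=O(|ak|)$. Combined with Proposition~\ref{l:large-k-estimate} this handles $|\omega|$ bounded away from zero when $a$ is small, but you still need a separate argument near $\omega=0$ (the paper uses continuity in $a$ of the pole sets of $R_r$ and $R_\theta$ and the explicit expansions~\eqref{e:r-theta-zero}, \eqref{e:r-r-zero}). Your Gohberg--Sigal computation of the residue also needs care: $R_g(\omega)$ is only a \emph{right} inverse on a noncompact domain with degenerate boundary behaviour, so the pairing $\langle Q_1\cdot 1,1\rangle$ is not a priori a finite number in any obvious inner product; the paper avoids this by computing the residues of $R_r$ and $R_\theta$ separately and combining them through the contour integral~\eqref{e:r-omega-int}.
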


Next, we use the methods of~\cite{w-z} and the fact that the only
trapping in our situation is normally hyperbolic to get a resonance
free strip:
\begin{theo}\label{t:resonance-free-strip}
Fix $\delta_r>0$ and $s>0$. Then there exist $a_0>0$, $\nu_0>0$,
and $C$ such that for $|a|<a_0$,
$$
\|R_g(\omega)\|_{L^2(M_K)\to L^2(M_K)}\leq C|\omega|^s,\
|\Real\omega|\geq C,\
|\Imag\omega|\leq \nu_0.
$$
\end{theo}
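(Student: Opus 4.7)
The plan is to rephrase the resolvent bound semiclassically with $h = 1/|\Real\omega|$, verify that the trapped set of the null geodesic flow on Kerr--de Sitter is normally hyperbolic for $|a|$ small, and then invoke the abstract estimate of \cite{w-z}. Setting $\omega = h^{-1}z$ with $\Real z$ in a fixed compact subset of $(0,\infty)$ and $|\Imag z| \leq \nu_0 h$, the rescaled operator $P(h,z) = h^2 P_g(h^{-1}z)$ has as its principal symbol (up to a nonvanishing factor) the dual Kerr--de Sitter metric minus an energy level, and the bound requested translates into a cutoff resolvent bound of the form $\|1_{M_K} (P(h,z))^{-1} 1_{M_K}\|_{L^2\to L^2} \leq C h^{-s}$, which is much weaker than the $O(h^{-1}\log(1/h))$ loss supplied by \cite{w-z} and therefore leaves room for the gluing losses discussed below.

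First I would analyze the trapped set $K \subset T^*M_K$ of the null bicharacteristic flow at a fixed energy. For $a=0$ (Schwarzschild--de Sitter) $K$ is the cosphere bundle over the photon sphere $r = r_{ph}$, a smooth codimension-two symplectic submanifold of the energy shell, whose normal bundle splits into one-dimensional stable and unstable subbundles on which the linearized flow expands, respectively contracts, at uniformly positive rates dominating any tangential rate. For $|a| < a_0$ small enough, $r$-normal hyperbolicity is structurally stable, so $K$ perturbs to a smooth symplectic submanifold with the same dynamical features. Since the construction of $R_g(\omega,k)$ is through separation of variables, I would verify this concretely by tracking the double critical point of the radial effective potential in $r$ and showing that its second derivative retains its sign and size for all relevant angular modes, uniformly in $a$.

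Next I would appeal to \cite{w-z}, whose main result gives $\|\chi(P(h,z))^{-1}\chi\|_{L^2\to L^2} \leq C h^{-1}\log(1/h)$ on a strip $|\Imag z| \leq \nu_0 h$ for semiclassical operators whose trapped set is normally hyperbolic, provided the symbol is real and of real principal type outside a neighborhood of $K$. To fit the present non-elliptic Fredholm setting into that framework, I would use the cutoff $1_{M_K}$ together with the outgoing condition of Theorem~\ref{t:global-outgoing}: propagation of singularities for $P(h,z)$ shows that bicharacteristics escaping $M_K$ toward $r = r_\pm$ do so transversally, and the outgoing boundary behaviour encoded by the smoothness of $v_\pm$ in Theorem~\ref{t:global-outgoing} effectively absorbs them, playing the role of a complex absorbing potential in the W--Z framework. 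Combining this absorption with the normally hyperbolic estimate near $K$ yields the stated semiclassical bound, and unwinding the rescaling gives the theorem.

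The main obstacle, I expect, is making the gluing between the W--Z estimate microlocalized near $K$ and the outgoing absorption near $r = r_\pm$ compatible with the Fredholm construction from Theorem~\ref{t:r-g-omega}, with errors controlled \emph{uniformly in} the angular momentum $k$, since $R_g(\omega)$ is assembled from the pieces $R_g(\omega,k)$ and the theorem asks for a bound that holds after this assembly. The natural way around this is to avoid further separation of variables at the semiclassical stage and work directly with the full operator $P_g(\omega)$ on $M_K$, using separation only to implement the outgoing boundary condition at the event horizons; the perturbative nature of small $a$ should then allow all symbol-level computations and hyperbolicity estimates to be carried out by a single perturbation from the Schwarzschild--de Sitter case.
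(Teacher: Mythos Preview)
Your strategy is genuinely different from the paper's, and the difference is not cosmetic. The paper does \emph{not} apply the Wunsch--Zworski estimate to the full operator on $M_K$; it first uses Proposition~\ref{l:smart-contour} to reduce Theorem~\ref{t:resonance-free-strip} to a bound on the \emph{radial} resolvent $R_r(\omega,\lambda,k)$ alone (Proposition~\ref{l:resonance-free-radial}), with $\lambda,k$ ranging over a compact set in the semiclassical scaling. This is a one-dimensional Schr\"odinger problem, on which the paper performs explicit complex scaling (Proposition~\ref{l:complex-scaling}) to obtain ellipticity near $x=\pm\infty$, and then classifies the zero-energy dynamics of the real potential $\widetilde V_0$ into three cases (Proposition~\ref{l:radial-nontrapping}): two nontrapping ones handled by a standard escape-function/positive-commutator argument (Proposition~\ref{l:nontrapping}), and one with a single hyperbolic fixed point, where the \cite{w-z} machinery is applied to the scaled one-dimensional operator (Proposition~\ref{l:trapping}). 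The ``normally hyperbolic trapped set'' here is literally a point in $T^*\mathbb R$, and the absorption at the horizons is supplied by the concrete contour deformation~\eqref{e:complex-scaling-f}, not by an abstract appeal to the outgoing condition.

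Your route---treat $P_g(\omega)$ as a three-dimensional semiclassical operator, verify normal hyperbolicity of the full trapped set, and invoke \cite{w-z} directly---is in principle viable and is close in spirit to later work, but as written it has a real gap. You say the outgoing smoothness of $v_\pm$ from Theorem~\ref{t:global-outgoing} ``plays the role of a complex absorbing potential''; this is the step that carries all the weight and you have not supplied a mechanism. In this paper there is no Fredholm theory for $P_g(\omega)$ on the full space, no radial-point or microlocal red-shift estimate, and no complex scaling in three dimensions; $R_g(\omega)$ is \emph{defined} by the contour integral~\eqref{e:r-omega-int} over $R_r\otimes R_\theta$, so any bound you prove for some other inverse of $P_g(\omega)$ still has to be matched to that object. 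Your final paragraph correctly identifies this gluing as the main obstacle, but then proposes to resolve it by ``avoid[ing] further separation of variables,'' which leaves you with no concrete absorbing mechanism at $r=r_\pm$ and no link back to the operator you are supposed to estimate. Either you must build a genuine replacement for complex scaling on the full operator (nontrivial, and not in the paper), or---as the paper does---accept the separation, push everything to the radial ODE, and run complex scaling and \cite{w-z} there.
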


Theorems~\ref{t:upper-half-plane} and~\ref{t:resonance-free-strip}, together with
the fact that resonances form a discrete set, imply that for $\nu_0$
small enough, zero is the only resonance in $\{\Imag\omega\geq-\nu_0\}$. This and the
presence of the global meromorphic continuation provide exponential decay of local energy:%
\footnote{Recently, the author has obtained stronger exponential
decay results
(\href{http://arxiv.org/abs/1010.5201}{\texttt{arXiv:1010.5201}}), as
well as a more precise description of resonances and a resonance
decomposition
(\href{http://arxiv.org/abs/1101.1260}{\texttt{arXiv:1101.1260}}); all
of these are based on the present paper.}
\begin{theo}\label{t:exponential-decay}
Let
$(r,t^*,\theta,\varphi^*)$ be the coordinates on the Kerr--de Sitter
background introduced in Section~1.  Fix $\delta_r>0$ and $s'>0$ and
assume that $a$ is small enough.  Let $u$ be a solution to the wave
equation $\Box_g u=0$ with initial data
\begin{equation}\label{e:exponential-decay-ivp}
\begin{gathered}
u|_{t^*=0}=f_0\in H^{3/2+s'}(M)\cap \mathcal E'(M_K),\\
\partial_{t^*}u|_{t^*=0}=f_1\in H^{1/2+s'}(M)\cap \mathcal E'(M_K).
\end{gathered}
\end{equation}
Also, define the constant
$$
u_0={1+\alpha\over 4\pi (r_+^2+r_-^2+2a^2)}\int_{t^*=0}*(du).
$$
Here $*$ denotes the Hodge star operator for the metric $g$ (see Section~1). Then
$$
\|u(t^*,\cdot)-u_0\|_{L^2(M_K)}\leq Ce^{-\nu' t^*}(\|f_0\|_{H^{3/2+s'}(M_K)}+\|f_1\|_{H^{1/2+s'}(M_K)}),\
t^*>0,
$$
for certain constants $C$ and $\nu'$ independent of $u$.
\end{theo}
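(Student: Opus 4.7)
The strategy is a Fourier--Laplace contour deformation: the pole of $R_g$ at $\omega=0$ will contribute the constant $u_0$, and the resonance-free strip produced by Theorems~\ref{t:upper-half-plane} and~\ref{t:resonance-free-strip} will yield exponential decay of the remainder.

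First I would use well-posedness of the Cauchy problem in Kerr-star coordinates (Section~1) to obtain $u(t^*,\cdot)$ of at most exponential growth in $t^*$. Extending $u$ by zero for $t^*<0$ and taking the Fourier--Laplace transform $\hat u(\omega)=\int_0^\infty e^{i\omega t^*}u(t^*,\cdot)\,dt^*$, convergent in $L^2$ for $\Imag\omega$ sufficiently large, the equation $\Box_g u=0$ together with the initial data~\eqref{e:exponential-decay-ivp} translates into $P_g(\omega)\hat u(\omega)=g(\omega)$, where $g(\omega)=g_1+\omega g_0$ is affine in $\omega$ with coefficients linear in $(f_0,f_1)$ and supported in $M_K$. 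Because $\hat u$ is outgoing in the sense of Theorem~\ref{t:global-outgoing}, Proposition~\ref{l:our-resolvent-relevance} identifies $\hat u(\omega)=R_g(\omega)g(\omega)$, and Fourier inversion gives
$$
u(t^*,\cdot)=\frac{1}{2\pi}\int_{\Imag\omega=C_0}e^{-i\omega t^*}R_g(\omega)g(\omega)\,d\omega
$$
for $C_0$ large enough.

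Next I would deform the contour down to $\{\Imag\omega=-\nu'\}$ for some $\nu'\in(0,\nu_0)$. By Theorem~\ref{t:upper-half-plane} there are no resonances in the upper half-plane and the bound $\|R_g\|\leq C|\Imag\omega|^{-2}$ controls the horizontal segment at $\Imag\omega=C_0$ as $C_0\to\infty$. Combining Theorems~\ref{t:upper-half-plane} and~\ref{t:resonance-free-strip} with the discreteness in Theorem~\ref{t:r-g-omega}, one sees that for $\nu_0$ small enough $\omega=0$ is the only resonance in $\{\Imag\omega\geq-\nu_0\}$. The residue extracted from the expansion in Theorem~\ref{t:upper-half-plane} is
$$
\mathrm{Res}_{\omega=0}\bigl[e^{-i\omega t^*}R_g(\omega)g(\omega)\bigr]=\frac{i\langle 1,g(0)\rangle\cdot 1}{4\pi(1+\alpha)(r_+^2+r_-^2+2a^2)}.
$$
A short computation of $g(0)$ in terms of $f_1$ (and $f_0$) and identification of $\langle 1,g(0)\rangle$ with $-i(1+\alpha)^2\int_{t^*=0}*(du)$, via the explicit form of $P_g$ and of the Hodge star in Kerr-star coordinates (Section~1), shows that this residue equals $u_0$.

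It remains to bound the horizontal integral along $\{\Imag\omega=-\nu'\}$, and vanish the vertical segments as $|\Real\omega|\to\infty$, by $Ce^{-\nu' t^*}$ times the stated Sobolev norms. On this contour one has $|e^{-i\omega t^*}|=e^{-\nu' t^*}$ and $\|R_g(\omega)\|_{L^2(M_K)\to L^2(M_K)}\leq C|\omega|^s$ by Theorem~\ref{t:resonance-free-strip}. \emph{The main obstacle} is that $\|g(\omega)\|_{L^2}$ grows linearly in $|\omega|$, so the naive integrand is of size $|\omega|^{s+1}$ and not integrable. I would resolve this by exploiting the Sobolev regularity in~\eqref{e:exponential-decay-ivp}: after a dyadic decomposition of $f_0,f_1$, the high-frequency pieces are handled by integrating by parts in $t^*$, converting time derivatives into the spatial wave operator through $\Box_g u=0$, so that the $H^{3/2+s'}$ and $H^{1/2+s'}$ norms translate into extra polynomial decay in $|\omega|$. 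The freedom to pick $s$ arbitrarily small in Theorem~\ref{t:resonance-free-strip} (at the cost of shrinking $\nu_0$) allows $s<s'$, which makes the resulting integrand absolutely integrable and produces the bound claimed in Theorem~\ref{t:exponential-decay}.
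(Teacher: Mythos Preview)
Your overall strategy—Fourier--Laplace transform plus contour deformation, with the simple pole of $R_g$ at $\omega=0$ contributing the constant $u_0$—matches the paper's. The difference lies in how the right-hand side is produced, and this is exactly where your proof has a gap.

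The paper does not transform $u$ extended by zero. Instead it introduces a smooth time cutoff $\chi(t^*)$ with $\chi\equiv 1$ for $t^*>1$, chosen via finite propagation speed so that $u(t^*,\cdot)$ is supported in $M_K$ on $\supp(1-\chi)$, and works with $\chi u$. The source term is then $f=\rho^2[\Box_g,\chi]u$, which is \emph{compactly supported in time}; hence $\hat f(\omega)$ is entire, and because $f\in H^{1/2+s'}_{\comp}(\mathbb R_{t^*};L^2(M_K))$ one has the Plancherel-type bound
$$
\int_{\Imag\omega=\nu}\langle\omega\rangle^{1+2s'}\|\hat f(\omega)\|_{L^2}^2\,d\omega\leq C\bigl(\|f_0\|_{H^{3/2+s'}}^2+\|f_1\|_{H^{1/2+s'}}^2\bigr).
$$
On the deformed contour one writes $\langle\omega\rangle^s\|\hat f(\omega)\|=\langle\omega\rangle^{s-s'-1/2}\cdot\langle\omega\rangle^{s'+1/2}\|\hat f(\omega)\|$ and applies Cauchy--Schwarz; the first factor is in $L^2$ precisely when $s<s'$, which is where the hypothesis $s'>0$ enters.

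Your route, with $g(\omega)=g_1+\omega g_0$ coming from boundary terms at $t^*=0$, produces a right-hand side whose $L^2(M_K)$ norm grows linearly in $|\omega|$, and no spatial Sobolev regularity of $f_0,f_1$ changes that: the coefficients $g_0,g_1$ are $\omega$-independent. Your proposed fix (``dyadic decomposition, integrate by parts in $t^*$, trade time derivatives for the spatial operator via $\Box_g u=0$'') is too vague to be a proof; making it precise would force you to exploit the time regularity of $u$ near $t^*=0$, which is essentially what the cutoff $\chi$ accomplishes in one stroke. As written, the integrability step is a genuine gap.

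Two smaller points. First, Proposition~\ref{l:our-resolvent-relevance} is stated and proved for $\widehat{\chi u}$, not for your $\hat u$; you would need to reprove it in your setting. Second, the paper identifies the residue via the clean identity $(\hat f(0),1)=\int_{M_K\times\mathbb R}\Box_g(\chi u)\,d\Vol=\int_{t^*=0}*(du)$, a one-line integration by parts; your direct computation of $\langle 1,g(0)\rangle$ in Kerr-star coordinates would be correct in principle but messier to carry out.
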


For the Kerr metric, the local energy decay is polynomial as shown by Tataru
and Tohaneanu~\cite{ta,t-t},
see also the lecture notes by Dafermos and Rodnianski~\cite{d-r} and the references below.

\noindent\textbf{Outline of the proof.}
The starting point of the construction of
$R_g(\omega)$ is the separation of variables
introduced by Teukolsky in~\cite{t}. The separation of variables
techniques and the related symmetries
have been used in many papers, including~\cite{a-b}, \cite{b-s}, 
\cite{b-h}, \cite{dss,dss1}, \cite{fksy,fksy:erratum},
\cite{sb-z}, \cite{to};
however, these mostly consider the case of zero cosmological
constant, where other difficulties occur at zero energy
and a global meromorphic continuation of the type presented 
here is unlikely. In our case, since the metric is
invariant under axial rotation, it is enough to construct the
operators $R_g(\omega,k)$ and study their behavior for large
$k$. The operator $P_g(\omega,k)$ is next decomposed into the sum of
two ordinary differential operators, $P_r$ and $P_\theta$
(see~\eqref{e:p-r-theta}). The separation of variables is discussed in
Section~1; the same section contains the derivation of
Theorem~\ref{t:exponential-decay} from the other theorems
by the complex contour deformation method. 

In the Schwarzchild--de Sitter case, $P_\theta$ is just the
Laplace--Beltrami operator on the round sphere and one can use
spherical harmonics to reduce the problem to studying the operator
$P_r+\lambda$ for large $\lambda$. In
the case $a\neq 0$, however, the operator $P_\theta$ is
$\omega$-dependent; what is more, it is no longer self-adjoint unless
$\omega\in \mathbb R$.  This raises two problems with the standard
implementation of separation of variables, namely decomposing $L^2$
into a direct sum of the eigenspaces of $P_\theta$. Firstly, since
$P_\theta$ is not self-adjoint, we cannot automatically guarantee
existence of a complete system of eigenfunctions and the corresponding
eigenspaces need not be orthogonal. Secondly, the eigenvalues of
$P_\theta$ are functions of $\omega$, and meromorphy of $R_g(\omega)$
is nontrivial to show when two of these eigenvalues
coincide. Therefore, instead of using the eigenspace decomposition, we
write $R_g(\omega)$ as a certain contour
integral~\eqref{e:r-omega-int} in the complex plane; the proof of
meromorphy of this integral is based on Weierstrass preparation
theorem. This is described in Section~2.

In Section~3, we use the separation of variables procedure to reduce
Theorems~\ref{t:r-g-omega-k}--\ref{t:upper-half-plane} to certain
facts about the radial resolvent $R_r$ (Proposition~\ref{l:radial}).
For fixed $\omega,\lambda,k$, where $\lambda\in \mathbb C$ is the
separation constant, $R_r$ is constructed in Section~4 using the
methods of one-dimensional scattering theory. Indeed, the radial
operator $P_r$, after the Regge--Wheeler change of
variables~\eqref{e:regge-wheeler}, is equivalent to the Schr\"odinger
operator $P_x=D_x^2+V_x(x)$ for a certain potential $V_x$
\eqref{e:p-x}. (Here $x=\pm\infty$ correspond
to the event horizons.) This does not, however, provide estimates on $R_r$ that
are uniform as $\omega,\lambda,k$ go to infinity.

The main difficulty then is proving a uniform resolvent
estimate (see~\eqref{e:radial-est}),
valid for large $\lambda$ and $\Real\lambda\gg
|\Imag\lambda|+|\omega|^2+|ak|^2$, which in particular guarantees the
convergence of the integral~\eqref{e:r-omega-int} and
Theorem~\ref{t:r-g-omega}.  A complication arises from the fact that
$V_x(\pm\infty)=-\omega_\pm ^2$, where $\omega_\pm$ are proportional
to $(r_\pm^2+a^2)\omega-ak$. No matter how large $\omega$ is, one can
always choose $k$ so that one of $\omega_\pm$ is small, making it
impossible to use standard complex scaling%
\footnote{Complex scaling originated in mathematical physics with
the work of Aguilar--Combes~\cite{a-c}, Balslev--Combes, and Simon.  It has
become a standard tool in chemistry for computing resonances. A
microlocal approach has been developed by Helffer--Sj\"ostrand, and a
more geometric version by Sj\"ostrand--Zworski~\cite{s-z}~--- see that
paper for pointers to the literature. Complex scaling was reborn
in numerical analysis in 1994 as the method of ``perfectly matched
layers'' (see~\cite{ber}).  A nice application of the method of complex scaling to the
Schwarzschild--de Sitter case is provided in \cite{d}.}, in the case
$\omega=o(k)$, due to the lack of ellipticity of the rescaled operator
at infinity. To avoid this issue, we use the analyticity of $V_x$ and
semiclassical analysis to get certain control on outgoing solutions at
two distant, but fixed, points (Proposition~\ref{l:vertical}), and
then an integration by parts argument to get an $L^2$ bound between
these two points. This is discussed in Section~6.

Finally, Section~7 contains the proof of
Theorem~\ref{t:resonance-free-strip}.  We first use the results of
Sections~2--6 to reduce the problem to scattering for the
Schr\"odinger operator $P_x$ in the regime $\lambda=O(\omega^2)$,
$k=O(\omega)$ (Proposition~\ref{l:resonance-free-radial}).  In this
case, we apply complex scaling to deform $P_x$ near $x=\pm\infty$ to an elliptic operator
(Proposition~\ref{l:complex-scaling}).  We then analyse the
corresponding classical flow; it is either nontrapping at zero energy,
in which case the usual escape function construction (as in, for
example,~\cite{m}) applies (Proposition~\ref{l:radial-nontrapping}),
or has a unique maximum. In the latter case we use the methods
of~\cite{w-z} designed to handle more general normally hyperbolic
trapped sets and based on commutator estimates in a slightly exotic
microlocal calculus. The argument of~\cite{w-z} has to be modified to
use complex scaling instead of an absorbing potential near infinity
(see also~\cite[Theorem~2]{w-z}).

It should be noted that, unlike~\cite{b-h} or~\cite{sb-z}, the construction
of $R_g(\omega)$ in the present
paper does not use the theorem of Mazzeo--Melrose~\cite{m-m} on the
meromorphic continuation of the resolvent on spaces with
asymptotically constant negative curvature (see also~\cite{g}).
In~\cite{b-h} and~\cite{sb-z}, this theorem had to be applied to prove
the existence of the meromorphic continuation of the resolvent for
$\omega$ in a fixed neighborhood of zero where complex scaling could
not be implemented.

\noindent\textbf{Remark.} The results of this paper also apply if the wave
equation is replaced by the Klein--Gordon equation~\cite{k}
$$
(\Box_g+m^2)u=0,
$$
where $m>0$ is a fixed constant. The corresponding stationary operator
is $P_g(\omega)+m^2\rho^2$; when restricted to the space $\mathcal D'_k$, it is the
sum of the two operators (see Section~1)
$$
\begin{gathered}
P_r(\omega,k;m)=P_r(\omega,k)+m^2r^2,\
P_\theta(\omega;m)=P_\theta(\omega)+m^2a^2\cos^2\theta.
\end{gathered}
$$
The proofs in this paper all go through in this case as well. 
In particular, the rescaled radial operator $P_x$ introduced in~\eqref{e:p-x}
is a Schr\"odinger operator with the potential
$$
V_x(x;\omega,\lambda,k;m)=(\lambda+m^2r^2)\Delta_r-(1+\alpha)^2((r^2+a^2)\omega-ak)^2.
$$
Since $V_x(\pm\infty)$ is still equal to $-\omega_\pm^2$ with $\omega_\pm$ defined
in~\eqref{e:omega-pm}, the radial resolvent can be defined as a meromorphic
family of operators on the entire complex plane. Also, the term $m^2r^2\Delta_r$ 
in the operator $P_x$ becomes of order $O(h^2)$ under
the semiclassical rescaling and thus does not affect the arguments in Sections~6 and~7.

The only difference in the Klein--Gordon case is the absense of the resonance
at zero: $1$ is no longer an outgoing solution to the equation
$P_x u=0$ for $\omega=k=\lambda=0$.
Therefore, there is no $u_0$ term in Theorem~\ref{t:exponential-decay},
and all solutions to~\eqref{e:exponential-decay-ivp} decay exponentially
in the compact set $M_K$.

%%%%%%%%%%%%%%%%%%%%%%%%%%%%%%%%%%%%%%%%%%%%%%%%%%%%%%%%%%%%%%%%%%%%%%%%%%%%%%%%
%                            KERR-DE SITTER METRIC                             %
%%%%%%%%%%%%%%%%%%%%%%%%%%%%%%%%%%%%%%%%%%%%%%%%%%%%%%%%%%%%%%%%%%%%%%%%%%%%%%%%
\section{Kerr--de Sitter metric}

The Kerr--de Sitter metric is given by the formulas \cite{c}
$$
\begin{gathered}
g=-\rho^2\Big({dr^2\over \Delta_r}+{d\theta^2\over\Delta_\theta}\Big)\\
-{\Delta_\theta\sin^2\theta\over (1+\alpha)^2\rho^2}
(a\,dt-(r^2+a^2)\,d\varphi)^2\\
+{\Delta_r\over (1+\alpha)^2\rho^2}
(dt-a\sin^2\theta\,d\varphi)^2.
\end{gathered}
$$
Here $M_0$ is the mass of the black hole, $\Lambda$ is the cosmological
constant (both of which we assume to be fixed throughout the paper),
and $a$ is the angular momentum (which we assume to be bounded by some constant,
and which is required to be small by most of our theorems);
$$
\begin{gathered}
\Delta_r=(r^2+a^2)\Big(1-{\Lambda r^2\over 3}\Big)-2M_0r,\\
\Delta_\theta=1+\alpha\cos^2\theta,\\
\rho^2=r^2+a^2\cos^2\theta,\
\alpha={\Lambda a^2\over 3}.
\end{gathered}
$$
We also put
$$
A_\pm=\mp\partial_r\Delta_r(r_\pm)>0.
$$
The metric is defined for $\Delta_r>0$; we assume that this happens on
an open interval $0<r_-<r<r_+<\infty$. (For $a=0$, this is true when
$9\Lambda M_0^2<1$; it remains true if we take $a$ small enough.) The
variables $\theta\in [0,\pi]$ and $\varphi\in \mathbb R/2\pi \mathbb Z$
are the spherical coordinates on the sphere $\mathbb S^2$. We define
the space slice $M=(r_-,r_+)\times \mathbb S^2$; then the Kerr--de Sitter
metric is defined on the spacetime $\mathbb R\times M$. 

%For $a=0$, we get the Schwarzschild-de Sitter metric:
%$$
%\begin{gathered}
%g_0=-{r^2\over\Delta_r}dr^2
%+{\Delta_r\over r^2}dt^2
%-r^2 g_S,
%\end{gathered}
%$$
%where
%$$
%g_S=d\theta^2+\sin^2\theta d\varphi^2
%$$
%is the round metric on the sphere of radius 1.

The d'Alembert--Beltrami operator of $g$ is given by
$$
\begin{gathered} 
\Box_g={1\over\rho^2}D_r(\Delta_rD_r)
+{1\over\rho^2\sin\theta}D_\theta(\Delta_\theta\sin\theta D_\theta)\\
+{(1+\alpha)^2\over\rho^2\Delta_\theta\sin^2\theta}
(a\sin^2\theta D_t+D_\varphi)^2\\
-{(1+\alpha)^2\over\rho^2\Delta_r}
((r^2+a^2) D_t+aD_\varphi)^2.
\end{gathered} 
$$
(Henceforth we denote $D={1\over i}\partial$.)
The volume form is
$$
d\Vol={\rho^2\sin\theta\over (1+\alpha)^2}\,dtdrd\theta d\varphi.
$$
If we replace $D_t$ by a number $-\omega\in \mathbb C$, then the
operator $\Box_g$ becomes equal to $P_g(\omega)/\rho^2$, where
$P_g(\omega)$ is the following differential operator on
$M$:
\begin{equation}\label{e:p-g-omega}
\begin{gathered}
P_g(\omega)=D_r(\Delta_r D_r)-{(1+\alpha)^2\over\Delta_r}((r^2+a^2)\omega-aD_\varphi)^2\\
+{1\over\sin\theta}D_\theta(\Delta_\theta\sin\theta D_\theta)
+{(1+\alpha)^2\over\Delta_\theta\sin^2\theta}(a\omega\sin^2\theta-D_\varphi)^2.
\end{gathered} 
\end{equation}

We now introduce the separation of variables for the operator $P_g(\omega)$.
We start with taking Fourier series in the variable $\varphi$. For every
$k\in \mathbb Z$, define the space
\begin{equation}\label{e:d-prime-k}
\mathcal D'_k=\{u\in \mathcal D'\mid (D_\varphi-k)u=0\}.
\end{equation}
This space can be considered as a subspace of $\mathcal D'(M)$
or of $\mathcal D'(\mathbb S^2)$ alone, and 
$$
L^2(M)=\bigoplus_{k\in \mathbb Z}(L^2(M)\cap \mathcal D'_k);
$$
the right-hand side is the Hilbert sum of a family of closed mutually orthogonal subspaces.

Let $P_g(\omega,k)$ be the restriction of $P_g(\omega)$ to $\mathcal
D'_k$.  Then we can write
$$
P_g(\omega,k)=P_r(\omega,k)+P_\theta(\omega)|_{\mathcal D'_k},
$$
where
\begin{equation}\label{e:p-r-theta}
\begin{gathered}
P_r(\omega,k)=D_r(\Delta_rD_r)-{(1+\alpha)^2\over\Delta_r}((r^2+a^2)\omega-ak)^2,\\
P_\theta(\omega)={1\over\sin\theta}D_\theta(\Delta_\theta\sin\theta D_\theta)
+{(1+\alpha)^2\over \Delta_\theta\sin^2\theta}(a\omega\sin^2\theta-D_\varphi)^2
\end{gathered}
\end{equation}
are differential operators in $r$ and $(\theta,\varphi)$, respectively. 

Next, we introduce a modification of the Kerr-star coordinates
(see~\cite[Section~5.1]{d-r}).
%%% Ask T. whether it is correct to call them that way
Following~\cite{t-t}, we remove the singularities at $r=r_\pm$ by making
the change of variables $(t,r,\theta,\varphi)\to (t^*,r,\theta,\varphi^*)$,
where
$$
t^*=t-F_t(r),\
\varphi^*=\varphi-F_\varphi(r).
$$
Note that $\partial_{t^*}=\partial_t$ and $\partial_{\varphi^*}=\partial_\varphi$.
In the new coordinates, the metric becomes
$$
\begin{gathered}
g=-\rho^2\Big({dr^2\over \Delta_r}+{d\theta^2\over\Delta_\theta}\Big)\\
-{\Delta_\theta\sin^2\theta\over (1+\alpha)^2\rho^2}
[a\,dt^*-(r^2+a^2)\,d\varphi^*+(aF'_t(r)-(r^2+a^2)F'_\varphi(r))dr]^2\\
+{\Delta_r\over (1+\alpha)^2\rho^2}
[dt^*-a\sin^2\theta\,d\varphi^*+(F'_t(r)-a\sin^2\theta F'_\varphi(r))dr]^2.
\end{gathered}
$$
The functions $F_t$ and $F_\varphi$ are required to be smooth on $(r_-,r_+)$ and
satisfy the following conditions:
\begin{itemize}
\item $F_t(r)=F_\varphi(r)=0$ for $r\in K_r=[r_-+\delta_r,r_+-\delta_r]$;
\item $F'_t(r)=\pm{(1+\alpha)(r^2+a^2)/\Delta_r}+F_{t_\pm}(r)$
and $F'_\varphi(r)=\pm{(1+\alpha)a/\Delta_r}+F_{\varphi\pm}(r)$,
where $F_{t\pm}$ and $F_{\varphi\pm}$ are smooth at $r=r_\pm$, respectively;
\item for some ($a$-independent) constant $C$ and all $r\in (r_-,r_+)$,
$$
{(1+\alpha)^2 (r^2+a^2)^2\over\Delta_r}-\Delta_rF'_t(r)^2-(1+\alpha)^2a^2\geq {1\over C}>0.
$$
\end{itemize}
Under these conditions, the metric $g$ in the new coordinates is smooth
up to the event horizons $r=r_\pm$ and the space slices
$$
M_{t_0}=\{t^*=t_0=\const\}\cap (\mathbb R\times M),\ t_0\in \mathbb R,
$$
are space-like. Let $\nu_t$ be the
time-like normal vector field to these surfaces, chosen so that
$g(\nu_t,\nu_t)=1$ and $\langle dt^*,\nu_t\rangle>0$. 
% the metric $g$ is nonpositive on the tangent spaces to the event horizons
% and $\nu_t$ is pointing outside of $\mathbb R\times M$ for $r=r_\pm$.

%%% Ask T. whether this piece can be just stated with a reference
We now establish a basic energy estimate for the wave equation in our setting.
Let $u$ be a real-valued function smooth in the coordinates $(t^*,r,\theta,\varphi^*)$
up to the event horizons. Define the vector field $T(du)$ by
$$
T(du)=\partial_t u\nabla_g u
-{1\over 2}g(du,du)\nu_t.
$$
Since $\nu_t$ is timelike, the expression $g(T(du),\nu_t)$
is a positive definite quadratic form in $du$.
For $t_0\in \mathbb R$, define $E(t_0)(du)$ as the integral
of this quadratic form over the space slice $M_{t_0}$ with the
volume form induced by the metric.
\begin{prop}\label{l:crude-energy-estimate} Take $t_1<t_2$ and let
$$
\Omega=\{t_1\leq t^*\leq t_2\}\times M.
$$
Assume that $u$ is smooth in $\Omega$ up to its boundary
and solves the wave equation $\Box_g u=0$ in this region. Then
$$
E(t_2)(du)\leq e^{C_e(t_2-t_1)}E(t_1)(du)
$$
for some constant $C_e$ independent of $t_1$ and $t_2$.
\end{prop}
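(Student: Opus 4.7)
The strategy is the classical stress-energy/multiplier method, adapted to the null boundary pieces $H_\pm = \{r=r_\pm\}\cap\Omega$ of the slab. For computational convenience I would replace the paper's $T(du)$ by the canonical energy current
\[
J := (\nu_t u)\nabla_g u - \tfrac{1}{2}g(du,du)\nu_t
\]
obtained by contracting the stress-energy tensor $T^{\mu\nu}(du) = \partial^\mu u\,\partial^\nu u - \tfrac{1}{2}g^{\mu\nu}g(du,du)$ with $\nu_t$ in both slots. In an orthonormal frame with $\nu_t$ as the timelike leg one checks that $g(J,\nu_t) = \tfrac{1}{2}((\nu_t u)^2 + \sum_i (e_i u)^2)$ is a positive definite quadratic form in $du$, pointwise comparable on the compact closure $\bar\Omega$ to the form $g(T(du),\nu_t)$ defining $E(t^*)(du)$. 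Hence it suffices to prove the bound for $E'(t^*) := \int_{M_{t^*}} g(J,\nu_t)$ and then transfer it through this equivalence.

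Applying Stokes' theorem to the three-form $i_J\,dV_g$ on $\Omega$ gives
\[
E'(t_2) - E'(t_1) + \Phi_+ + \Phi_- = \int_\Omega \nabla\cdot J\,dV_g,
\]
where $\Phi_\pm$ denotes the flux through $H_\pm$. The wave equation together with the identity $\nabla_\mu T^{\mu\nu}=0$ gives $\nabla\cdot J = \tfrac{1}{2} T^{\mu\nu}\pi^{\nu_t}_{\mu\nu}$, where $\pi^{\nu_t}$ is the deformation tensor of $\nu_t$. Since the metric and $\nu_t$ extend smoothly up to the horizons in Kerr-star coordinates, $\pi^{\nu_t}$ is bounded on $\bar\Omega$, yielding the pointwise estimate $|\nabla\cdot J|\le C\,g(J,\nu_t)$. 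Fubini in $t^*$, together with boundedness of the lapse on $\bar\Omega$, then gives $\int_\Omega \nabla\cdot J\,dV_g \le C'\int_{t_1}^{t_2}E'(\tau)\,d\tau$.

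The decisive step is nonnegativity of the horizon fluxes, so they may be dropped from an upper bound on $E'(t_2)$. On each null hypersurface $H_\pm$ the flux takes the form $\int_{H_\pm}T(\nu_t,\ell_\pm)\,d\mu_\pm$, where $\ell_\pm$ is an outward-pointing null generator; the dominant energy condition $T(X,Y)\ge 0$ for future directed causal $X,Y$ then gives the sign provided $\nu_t$ and $\ell_\pm$ both lie in the future causal cone on $H_\pm$. This is precisely what the three bullet conditions on $F_t,F_\varphi$ in Section~1 provide: the explicit $1/\Delta_r$ behaviour of $F'_t,F'_\varphi$ at $r=r_\pm$ smooths out the Boyer-Lindquist singularity, while the lower bound $(1+\alpha)^2(r^2+a^2)^2/\Delta_r-\Delta_rF'_t(r)^2-(1+\alpha)^2a^2\ge 1/C$ encodes both the spacelikeness of the slices (so $\nu_t$ is future timelike) and the placement of the outgoing null generators of the horizons in the correct cone. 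Once these verifications are in hand one obtains $E'(t_2)\le E'(t_1) + C'\int_{t_1}^{t_2}E'(\tau)\,d\tau$, Gronwall closes the estimate with constant $C_e$ independent of $t_1,t_2$, and the equivalence $E\asymp E'$ transfers it to $E$. The main obstacle is exactly the sign check for $\Phi_\pm$ at the null boundaries; off the horizons the argument is a routine bounded-geometry energy estimate.
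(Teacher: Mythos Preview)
Your proof is correct and follows the same template as the paper's: apply the divergence theorem to an energy current on the slab, bound the bulk divergence pointwise by the energy density, check that the horizon fluxes have the favorable sign, and close with Gronwall. The paper carries this out directly with its current $T(du)=\partial_t u\,\nabla_g u-\tfrac12 g(du,du)\nu_t$, arguing the horizon sign from the fact that the induced metric on $\{r=r_\pm\}$ is nonpositive and $\nu_t$ points outward; you instead swap to the canonical stress-energy current with multiplier $\nu_t$, invoke the deformation tensor for the bulk term and the dominant energy condition for the null-boundary sign, and then transfer back to $E$ via the comparability $E\asymp E'$.

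The only substantive difference is this change of multiplier. Your choice is more textbook and makes the positivity of $g(J,\nu_t)$ and of the horizon flux completely transparent (two future causal vectors in the DEC), at the cost of the extra equivalence step $E\asymp E'$---which is harmless since $\partial_{t^*}$ and $\nu_t$ are smooth and the slices are uniformly spacelike on the compact closure. The paper's choice avoids that comparison but leaves the positive-definiteness of $g(T(du),\nu_t)$ and the horizon sign as slightly less obvious assertions. Either route yields the same constant-independent Gronwall inequality.
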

\begin{proof} We use the method of~\cite[Proposition~2.8.1]{tay}.
We apply the divergence theorem to the vector field $T(du)$ on the domain
$\Omega$. The integrals over $M_{t_1}$ and $M_{t_2}$ will be equal to
$-E(t_1)$ and $E(t_2)$. The restriction of the metric to tangent spaces
of the event horizons is nonpositive and the field $\nu_t$ is pointing
outside of $\Omega$ at $r=r_\pm$; therefore, the integrals over the event
horizons will be nonnegative. Finally, since $\Box_g u=0$, one can prove
that $\Div T(du)$ is quadratic in $du$ and thus
$$
|\Div T(du)|\leq C g(T(du),\nu_t).
$$
Therefore, the divergence theorem gives
$$
E(t_2)-E(t_1)\leq C\int_{t_1}^{t_2} E(t_0)\,dt_0.
$$
It remains to use Gronwall's inequality.
\end{proof}
The geometric configuration of $\{t^*=t_1\}$, $\{t^*=t_2\}$,
$\{r=r_\pm\}$, and $\nu_t$ with respect to the Lorentzian metric $g$ used in
Proposition~\ref{l:crude-energy-estimate}, combined with the theory of
hyperbolic equations (see~\cite[Proposition~3.1.1]{d-r},
\cite[Theorem~23.2.4]{ho3}, or~\cite[Sections~2.8 and~7.7]{t}),
makes it possible to prove
that for each $f_0\in H^1(M)$,
$f_1\in L^2(M)$, there exists a unique solution
$$
u(t^*,\cdot)\in C([0,\infty);H^1(M))\cap C^1([0,\infty);L^2(M))
$$
to the initial value problem
\begin{equation}\label{e:wave-ivp}
\Box_g u=0,\
u|_{t^*=0}=f_0,\
\partial_{t^*}u|_{t^*=0}=f_1.
\end{equation}

We are now ready to prove Theorem~\ref{t:exponential-decay}.  Fix
$\delta_r>0$ and assume that $a$ is chosen small enough so that
Theorems~\ref{t:r-g-omega}--\ref{t:resonance-free-strip} hold.  Assume
that $s'>0$ and $u$ is the solution to~\eqref{e:wave-ivp} with $f_0\in
H^{3/2+s'}\cap \mathcal E'(M'_K)$ and $f_1\in H^{1/2+s'}\cap \mathcal
E'(M'_K)$, where $M'_K$ is fixed and compactly contained in $M_K$. By
finite propagation speed (see~\cite[Theorem~2.6.1
and~Section~2.8]{t}), there exists a function $\chi(t)\in
C^\infty(0,\infty)$ independent of $u$ and such that $\chi(t^*)=1$ for
$t^*>1$, and for $t^*\in\supp (1-\chi)$, $\supp u(t^*,\cdot)\subset
M_K$. By Proposition~\ref{l:crude-energy-estimate}, we can define the
Fourier-Laplace transform
$$
\widehat{\chi u}(\omega)=\int e^{it^*\omega}\chi(t^*)u(t^*,\cdot)\,dt^*\in H^{3/2+s'}(M),\
\Imag\omega>C_e.
$$
Put $f=\rho^2\Box_g(\chi u)=\rho^2[\Box_g,\chi]u$; then %\?[really~--- indeed, use interpolation]
$$
f\in H^{1/2+s'}_{\comp}(\mathbb R;L^2(M)\cap \mathcal E'(M_K)).
$$ 
Therefore, one can define the Fourier-Laplace transform
$\hat f(\omega)\in L^2\cap \mathcal E'(M_K)$ for all $\omega\in \mathbb C$, and we have the estimate
$$
\int \langle\omega\rangle^{2s'+1}\|\hat f(\omega)\|_{L^2(M)}^2\,d\omega\leq C(\|f_0\|_{H^{3/2+s'}}^2
+\|f_1\|_{H^{1/2+s'}}^2).
$$
where integration is performed over the line $\{\Imag\omega=\nu=\const\}$
with $\nu$ bounded.

\begin{prop}\label{l:our-resolvent-relevance}
We have for $\Imag\omega>C_e$,
$$
\widehat{\chi u}(\omega)|_{M_K}=R_g(\omega)\hat f(\omega).
$$
\end{prop}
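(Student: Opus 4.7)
My plan is to apply $\rho^2\Box_g$ to $\chi u$, use $\Box_g u=0$ to reduce the left side to $\rho^2[\Box_g,\chi]u=f$, take the Fourier-Laplace transform in $t^*$, and then recognize the result as a statement about $R_g(\omega)$ by inverting $P_g(\omega)$ via the outgoing condition. First I note that, by Proposition~\ref{l:crude-energy-estimate}, $u(t^*,\cdot)$ grows at most like $e^{C_e t^*/2}$ in $H^1(M)$, so $\widehat{\chi u}(\omega)$ is well-defined and lies in $H^{3/2+s'}(M)$ for $\Imag\omega>C_e$; meanwhile $[\Box_g,\chi]$ is supported where $\chi'(t^*)\neq 0\subset\supp(1-\chi)$, so by the property of $\chi$ cited in the excerpt, $f$ is compactly supported in $M_K$ in the space variables and $\hat f(\omega)\in L^2\cap\mathcal E'(M_K)$ is defined for all $\omega\in\mathbb C$.

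\textbf{Fourier transform of the equation.} Next I use that in the Kerr-star coordinates one still has $\partial_{t^*}=\partial_t$ and $\partial_{\varphi^*}=\partial_\varphi$ as vector fields; hence substituting $D_{t^*}=-\omega$ into $\rho^2\Box_g$ produces exactly the operator $P_g(\omega)$ of~\eqref{e:p-g-omega}. Taking the Fourier-Laplace transform in $t^*$ of $\rho^2\Box_g(\chi u)=f$ therefore yields
$$
P_g(\omega)\,\widehat{\chi u}(\omega)=\hat f(\omega)\quad\text{on }M,\qquad\Imag\omega>C_e.
$$

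\textbf{Outgoing condition and inversion.} To invert $P_g(\omega)$, I will verify the outgoing condition. Well-posedness of the Cauchy problem in the extended coordinates (cited after Proposition~\ref{l:crude-energy-estimate}) shows that $\chi u$ is smooth up to the event horizons in $(t^*,r,\theta,\varphi^*)$, and together with the exponential growth bound this smoothness is inherited by $\widehat{\chi u}(\omega,\cdot)$ as a function of $(r,\theta,\varphi^*)$. Converting back to $(r,\theta,\varphi)$ introduces a factor $e^{i\omega F_t(r)}$ and shifts $\varphi\mapsto\varphi-F_\varphi(r)$; the prescribed singular parts of $F_t'$ and $F_\varphi'$ at $r=r_\pm$ produce precisely the weights $|r-r_\pm|^{iA_\pm^{-1}(1+\alpha)(r_\pm^2+a^2)\omega}$ and argument shifts defining $v_\pm$ in Theorem~\ref{t:global-outgoing}, so $\widehat{\chi u}(\omega)$ is outgoing. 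Decomposing in the angular variable, each mode $(\widehat{\chi u})_k\in\mathcal D'_k$ satisfies $P_g(\omega,k)(\widehat{\chi u})_k=(\hat f)_k$ and remains outgoing; since Theorem~\ref{t:upper-half-plane}(1) rules out $k$-resonances in $\Imag\omega>0$, Theorem~\ref{t:global-outgoing}(2) forbids nontrivial outgoing solutions of $P_g(\omega,k)v=0$, forcing $(\widehat{\chi u})_k=R_g(\omega,k)(\hat f)_k$. Restricting to $M_K$ (on which $\hat f$ is already supported) and summing over $k$ reproduces $R_g(\omega)\hat f(\omega)$ as defined in Theorem~\ref{t:r-g-omega}.

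\textbf{Main obstacle.} The hardest part will be the outgoing step: carefully tracking how the Kerr-star smoothness of $\chi u$ passes through both the Fourier transform in $t^*$ and the coordinate change back to $(r,\theta,\varphi)$ to produce the exact weight-and-shift form of the outgoing condition. This uses the explicit logarithmic singularities of $F_t$ and $F_\varphi$ at $r=r_\pm$ and requires justifying differentiation under the exponentially weighted Fourier integral, so that smoothness of $\widehat{\chi u}(\omega,\cdot)$ up to the event horizons in $(r,\theta,\varphi^*)$ really holds in $C^\infty$ and not merely in an averaged sense.
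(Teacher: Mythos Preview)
Your overall strategy---show that the Fourier--Laplace transform satisfies $P_g(\omega)(\cdot)=\hat f$, verify the outgoing condition, then invoke uniqueness---is sound, and the outgoing step is handled correctly in spirit. But the uniqueness step contains a genuine gap.

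You write that ``Theorem~\ref{t:global-outgoing}(2) forbids nontrivial outgoing solutions of $P_g(\omega,k)v=0$'' once $\omega$ is known not to be a resonance. That is the \emph{converse} of what Theorem~\ref{t:global-outgoing}(2) actually says. The theorem asserts that \emph{if} $\omega$ is a resonance \emph{then} a nonzero outgoing solution exists; its contrapositive is ``no nonzero outgoing solution $\Rightarrow$ not a resonance,'' which is useless to you. What you need is ``not a resonance $\Rightarrow$ no nonzero outgoing solution,'' and this is nowhere stated or proved in the paper. Since $R_g(\omega,k)$ is constructed only as a \emph{right} inverse and $P_\theta(\omega)$ is not self-adjoint for complex $\omega$ (so one cannot simply diagonalize in the angular variable), this converse is not automatic.

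The paper closes this gap by a completely different mechanism: it forms $\Phi=e^{i\omega F_t(r)}\widehat{\chi u}(\omega)-R_g(\omega,k)\hat f(\omega)$, notes that $U=e^{-it\omega}\Phi$ is a solution of $\Box_g U=0$ smooth up to the event horizons (both terms are outgoing), and then observes that for $\Imag\omega>C_e$ the energy of $U$ would grow faster than Proposition~\ref{l:crude-energy-estimate} permits unless $\Phi=0$. This energy argument supplies uniqueness directly, without Theorem~\ref{t:upper-half-plane} and without needing the unproved converse. If you want to salvage your route, you would have to supply an independent proof that an outgoing element of $\ker P_g(\omega,k)$ forces $\omega$ to be a $k$-resonance; the simplest fix is to replace your appeal to Theorems~\ref{t:global-outgoing}(2) and~\ref{t:upper-half-plane} by the energy growth argument above.

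A smaller point: your displayed equation $P_g(\omega)\,\widehat{\chi u}(\omega)=\hat f(\omega)$ on $M$ is not literally correct away from $M_K$. Substituting $D_{t^*}\to-\omega$ in $\rho^2\Box_g$ written in $(t^*,r,\theta,\varphi^*)$ yields a conjugate of $P_g(\omega)$, because $\partial_r$ in the new coordinates differs from $\partial_r$ in the old ones by $F_t'\partial_t+F_\varphi'\partial_\varphi$. The correct global statement (which you essentially acknowledge in your outgoing step) is $P_g(\omega)\bigl(e^{i\omega F_t(r)}\widehat{\chi u}(\omega)\bigr)=\hat f(\omega)$, with the $\varphi$-shift absorbed as well; this matters because the uniqueness argument must take place on all of $M$, not just $M_K$.
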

\begin{proof}
Without loss of generality, we may assume that $u\in C^\infty\cap \mathcal D'_k$
for some $k\in \mathbb Z$; then $R_g(\omega)\hat f(\omega)$ can be defined
on the whole $M$ by Theorem~\ref{t:r-g-omega-k}.
Fix $\omega$ and put
$$
\Phi(\omega)=e^{i\omega F_t(r)}\widehat{\chi u}(\omega)-R_g(\omega,k)\hat f(\omega)\in C^\infty(M).
$$
Since $\rho^2\Box_g(\chi u)=f$, we have
$$
P_g(\omega)(e^{i\omega F_t(r)}\widehat{\chi u}(\omega))=\hat f(\omega);
$$
therefore, $P_g(\omega)\Phi(\omega)=0$.  Note also that $\Phi$ is
smooth inside $M$ because of ellipticity of the operator $P_g(\omega)$
on $\mathcal D'_k$ (see~\cite[Section~7.4]{t} and the last step of the
proof of Theorem~\ref{t:r-g-omega-k}).  Now, if we put
$$
U(t,\cdot)=e^{-it\omega}\Phi(\omega)(\cdot),
$$
then $\Box_g U=0$ inside $M_S$. However, by Theorem~\ref{t:global-outgoing},
$U$ is smooth in the $(r,t^*,\theta,\varphi^*)$ coordinates up to the event horizons
and its energy grows in time faster than allowed by Proposition~\ref{l:crude-energy-estimate};
therefore, $\Phi=0$.
\end{proof}

We now restrict our attention to the compact $M_K$, where in particular $t=t^*$ and $\varphi=\varphi^*$.
By the Fourier Inversion Formula, for $t>1$ and $\nu>C_e$,
$$
u(t)|_{M_K}=(2\pi)^{-1}\int e^{-it(\omega+i\nu)}R_g(\omega+i\nu)\hat f(\omega+i\nu)\,d\omega.
$$
Fix positive $s<s'$. By Theorems~\ref{t:upper-half-plane} and~\ref{t:resonance-free-strip},
there exists $\nu_0>0$ such that zero is the only resonance with $\Imag\omega\geq -\nu_0$.
Using the estimates in these theorems, we can deform the contour of integration above
to the one with $\nu=-\nu_0$. Indeed, by a density argument we may assume that $u\in C^\infty$,
and in this case, $\hat f(\omega)$ is rapidly decreasing as $\Real\omega\to\infty$ for $\Imag\omega$ fixed.
We then get
\begin{equation}\label{e:wave-equation-last-integral}
\begin{gathered}
u(t)|_{K_r}={1+\alpha\over 4\pi (r_+^2+r_-^2+2a^2)}(\hat f(0),1)_{L^2(K_r)}
\\
+(2\pi)^{-1}e^{-\nu_0 t}\int e^{-it\omega}R_g(\omega-i\nu_0)\hat f(\omega-i\nu_0)\,d\omega.
\end{gathered}
\end{equation}
We find a representation of the first term above in terms of the initial data for $u$ at time zero.
We have
$$
(\hat f(0),1)_{L^2(K_r)}=\int_{M_K\times \mathbb R} \Box_g(\chi u)\,d\Vol.
$$
Here $d\Vol$ is the volume form induced by $g$.
Integrating by parts, we get
\begin{equation}\label{e:zero-resonance-contribution}
\int_{M_K\times \mathbb R} \Box_g(\chi u)\,d\Vol
=-\int_{t\geq 0}\Box_g((1-\chi) u)\,d\Vol
=\int_{t=0} *(du).
\end{equation}
Here $*$ is the Hodge star operator induced by the metric $g$, with the orientation
on $M$ and $\mathbb R\times M$ chosen so that $*(dt)$ is positively oriented
on $\{t=0\}$.

Finally, the $L^2$ norm of the integral term in~\eqref{e:wave-equation-last-integral} can be estimated by
$$
\begin{gathered}
Ce^{-\nu_0 t}\int \langle \omega\rangle^{s-s'-1/2} \|\langle\omega\rangle^{s'+1/2}\hat f(\omega-i\nu_0)\|_{L^2(K_r)}\,d\omega\\
\leq Ce^{-\nu_0 t}\|\langle\omega\rangle^{s'+1/2}\hat f(\omega-i\nu_0)\|_{L^2_\omega(\mathbb R)L^2(K_r)}\\
\leq Ce^{-\nu_0 t}(\|f_0\|_{H^{s'+3/2}}+\|f_1\|_{H^{s'+1/2}}),
\end{gathered}
$$
since $\langle\omega\rangle^{s-s'-1/2}\in L^2$. This proves Theorem~\ref{t:exponential-decay}.

\noindent\textbf{Remark.} In the original coordinates, $(t,r,\theta,\varphi)$,
the equation $\Box_g u=0$ has two solutions depending only on the time variable,
namely, $u=1$ and $u=t$. Even though Theorem~\ref{t:exponential-decay} does not
apply to these solutions because we only construct the family of operators $R_g(\omega)$
acting on functions
on the compact set $M_K$, it is still interesting to see where our argument fails if $R_g(\omega)$
were well-defined on the whole $M$. The key fact is that our Cauchy problem is formulated
in the $t^*$ variable. Then, for $u=t$ the function $f_0=u|_{t^*=0}$ behaves like
$\log|r-r_\pm|$ near the event horizons and thus does not lie in the energy space $H^1$.
As for $u=1$, our theorem gives the correct form of the contribution of the zero resonance,
namely, a constant; however, the value of this constant cannot be given by the integral
of $*(du)$ over ${t^*=0}$, as $du=0$. This discrepancy is explained if we look closer
at the last equation in~\eqref{e:zero-resonance-contribution}; while integrating by parts,
we will get a nonzero term coming from the integral of $*d(\chi(t^*))$ over the event horizons.

%%%%%%%%%%%%%%%%%%%%%%%%%%%%%%%%%%%%%%%%%%%%%%%%%%%%%%%%%%%%%%%%%%%%%%%%%%%%%%%%
%                SEPARATION OF VARIABLES IN AN ABSTRACT SETTING                %
%%%%%%%%%%%%%%%%%%%%%%%%%%%%%%%%%%%%%%%%%%%%%%%%%%%%%%%%%%%%%%%%%%%%%%%%%%%%%%%%
\section{Separation of variables in an abstract setting}

In this section, we construct inverses for certain families of
operators with separating variables. Since the method described below
can potentially be applied to other situations, we develop it
abstractly, without any reference to the operators of our problem.
Similar constructions have been used in other settings by
Ben-Artzi--Devinatz~\cite{b-d} and Mazzeo--Vasy~\cite[Section~2]{m-v}.

First, let us consider a differential operator
$$
P(\omega)=P_1(\omega)+P_2(\omega)
$$
in the variables $(x_1,x_2)$, where $P_1(\omega)$ is a differential
operator in the variable $x_1$ and $P_2(\omega)$ is a differential
operator in the variable $x_2$; $\omega$ is a complex parameter. If we
take $\mathcal H_1$ and $\mathcal H_2$ to be certain $L^2$ spaces in
the variables $x_1$ and $x_2$, respectively, then the corresponding
$L^2$ space in the variables $(x_1,x_2)$ is their Hilbert tensor
product $\mathcal H=\mathcal H_1 \otimes \mathcal H_2$.  Recall that
for any two bounded operators $A_1$ and $A_2$ on $\mathcal H_1$ and
$\mathcal H_2$, respectively, their tensor product $A_1 \otimes A_2$
is a bounded operator on $\mathcal H$ and
$$
\|A_1 \otimes A_2\|=\|A_1\|\cdot \|A_2\|.
$$
The operator $P$ is now written on $\mathcal H$ as
$$
P(\omega)=P_1(\omega) \otimes 1_{\mathcal H_2}+1_{\mathcal H_1} \otimes P_2(\omega).
$$

We now wish to construct an inverse to $P(\omega)$.  The method used
is an infinite-dimensional generalization of the following elementary
\begin{prop}\label{l:finite-sep-mer}
Assume that $A$ and $B$ are two (finite-dimensional) matrices and that
the matrix $A \otimes 1+1 \otimes B$ is invertible. (That is, no
eigenvalue of $A$ is the negative of an eigenvalue of $B$.) For
$\lambda\in \mathbb C$, let $R_A(\lambda)=(A+\lambda)^{-1}$ and
$R_B(\lambda)=(B-\lambda)^{-1}$. Take $\gamma$ to be a bounded simple
closed contour in the complex plane such that all poles of $R_A$ lie
outside of $\gamma$, but all poles of $R_B$ lie inside $\gamma$; we
assume that $\gamma$ is oriented in the clockwise direction. Then
$$
(A \otimes 1+1 \otimes B)^{-1}={1\over 2\pi i}\int_{\gamma} R_A(\lambda) \otimes R_B(\lambda)\,d\lambda.
$$
\end{prop}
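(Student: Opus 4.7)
The plan is to verify directly that the proposed contour integral is a right inverse for $A \otimes 1 + 1 \otimes B$; since we are in finite dimensions, this automatically yields the two-sided inverse. The computation splits naturally into an algebraic step and a residue calculation.

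First I would compute $(A \otimes 1 + 1 \otimes B)(R_A(\lambda) \otimes R_B(\lambda))$ by distributing over the tensor product and using the elementary resolvent identities
$$
A R_A(\lambda) = 1 - \lambda R_A(\lambda), \qquad B R_B(\lambda) = 1 + \lambda R_B(\lambda),
$$
which follow at once from the definitions. The two cross terms $\pm \lambda R_A(\lambda) \otimes R_B(\lambda)$ then appear with opposite signs and cancel, leaving
$$
(A \otimes 1 + 1 \otimes B)(R_A(\lambda) \otimes R_B(\lambda)) = 1 \otimes R_B(\lambda) + R_A(\lambda) \otimes 1.
$$

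Next I would integrate both surviving terms over $\gamma$ and invoke the residue theorem. Every pole of $R_A$ lies outside $\gamma$, so Cauchy's theorem gives $\frac{1}{2\pi i}\oint_\gamma R_A(\lambda)\,d\lambda = 0$, which kills the second term. For the first, the Laurent expansion of $(B-\lambda)^{-1}$ around each eigenvalue $\beta$ of $B$, read off from the Jordan decomposition, has residue $-P_\beta$, where $P_\beta$ is the spectral projection onto the generalized $\beta$-eigenspace. With the clockwise orientation the signs flip, so
$$
\frac{1}{2\pi i}\oint_\gamma R_B(\lambda)\,d\lambda = \sum_\beta P_\beta = 1_{\mathcal H_2},
$$
yielding $1 \otimes 1_{\mathcal H_2} = 1_{\mathcal H}$ as required.

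There is no real obstacle here; the argument is essentially a direct calculation. The only point needing modest care is reading off the residue of $R_B$ correctly in the presence of Jordan blocks, but the nilpotent part of each block only contributes higher-order polar terms and does not affect the residue. Alternatively, one could perturb $B$ to be diagonalizable, verify the formula there, and pass to the limit, since the invertibility of $A \otimes 1 + 1 \otimes B$ is an open condition and both sides of the claimed identity depend continuously on the matrix entries.
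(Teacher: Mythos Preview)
Your argument is correct. The paper does not actually supply a proof of this proposition; it is stated as ``elementary'' and used as motivation for the infinite-dimensional construction. The computation you carry out---using $AR_A=1-\lambda R_A$ and $BR_B=1+\lambda R_B$ to reduce $(A\otimes 1+1\otimes B)(R_A\otimes R_B)$ to $1\otimes R_B+R_A\otimes 1$, then evaluating the two contour integrals by residues---is exactly the ``method of Proposition~\ref{l:finite-sep-mer}'' that the paper later invokes in the proof of Theorem~\ref{t:r-g-omega-k}, where the same identity is verified in the infinite-dimensional setting against eigenfunctions of $P_\theta$.
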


The starting point of the method are the inverses%
\footnote{In this section, we do not use the fact that
$R_j(\omega,\lambda)=(P_j(\omega)\pm\lambda)^{-1}$, neither do we
prove that $R(\omega)=P(\omega)^{-1}$. This step will be done in our
particular case in the proof of Theorem~\ref{t:r-g-omega-k} in the
next section; in fact, $R_1$ will only be a right inverse to
$P_1+\lambda$. Until then, we merely establish properties of
$R(\omega)$ defined by~\eqref{e:r-omega-int} below.}
$$
\begin{gathered}
R_1(\omega,\lambda)=(P_1(\omega)+\lambda)^{-1},\
R_2(\omega,\lambda)=(P_2(\omega)-\lambda)^{-1}
\end{gathered}
$$
defined for $\lambda\in \mathbb C$. These inverses depend on two complex
variables, and we need to specify
their behavior near the singular points:
\begin{defi}\label{d:mer2}
Let $\mathcal X$ be any Banach space, and let $W$ be a domain in
$\mathbb C^2$.  We say that $T(\omega,\lambda)$ is an
($\omega$-nondegenerate) meromorphic map $W\to \mathcal X$ if:
\begin{enumerate}
\item[(1)] $T(\omega,\lambda)$ is a (norm) holomorphic function of two complex variables
with values in $\mathcal X$ for $(\omega,\lambda)\not\in Z$, where $Z$
is a closed subset of $W$, called the \textbf{divisor} of $T$,
\item[(2)] for each $(\omega_0,\lambda_0)\in Z$, we can write
$T(\omega,\lambda)=S(\omega,\lambda)/X(\omega,\lambda)$
near $(\omega_0,\lambda_0)$, where $S$ is holomorphic with values in
$\mathcal X$ and $X$ is a holomorphic function of two variables (with
values in $\mathbb C$) such that:
\begin{itemize}
\item for each $\omega$ close to $\omega_0$, there exists $\lambda$ such that
$X(\omega,\lambda)\neq 0$, and
\item the divisor of $T$ is given by $\{X=0\}$ near $(\omega_0,\lambda_0)$.
\end{itemize}
\end{enumerate}  
\end{defi}

Note that the definition above is stronger than the standard
definition of meromorphy and it is not symmetric in $\omega$ and
$\lambda$. Henceforth we will use this definition when talking about
meromorphic families of operators of two complex variables. It is
clear that any derivative (in $\omega$ and/or $\lambda$) of a
meromorphic family is again meromorphic. Moreover, if
$T(\omega,\lambda)$ is meromorphic and we fix $\omega$, then $T$ is a
meromorphic family in $\lambda$.

If $\mathcal X$ is the space of all bounded operators on some Hilbert
space (equipped with the operator norm), then it makes sense to talk
about having poles of finite rank:
\begin{defi}\label{d:2-mer-fin}
Let $\mathcal H$ be a Hilbert space and let $T(\omega,\lambda)$ be a
meromorphic family of operators on $\mathcal H$ in the sense of
Definition~\ref{d:mer2}. For $(\omega_0,\lambda_0)$ in the
divisor of $T$, consider the decomposition
$$
T(\omega_0,\lambda)=T_H(\lambda)+\sum_{j=1}^N {T_j\over (\lambda-\lambda_0)^j}.
$$
Here $T_H$ is holomorphic near $\lambda_0$ and $T_j$ are some
operators. We say that $T$ has \textbf{poles of finite rank} if every
operator $T_j$ in the above decomposition of every $\omega$-derivative
of $T$ near every point in the divisor is finite-dimensional.
\end{defi}

One can construct meromorphic families of operators with poles of
finite rank by using the following generalization of Analytic Fredholm
Theory:
\begin{prop}\label{l:aft-2}
Assume that $T(\omega,\lambda):\mathcal H_1\to \mathcal H_2$,
$(\omega,\lambda)\in \mathbb C^2$, is a
holomorphic family of Fredholm operators, where $\mathcal H_1$ and $\mathcal H_2$
are some Hilbert spaces. Moreover, assume that for each $\omega$, there exists
$\lambda$ such that the operator $T(\omega,\lambda)$ is
invertible. Then $T(\omega,\lambda)^{-1}$ is a meromorphic family
of operators $\mathcal H_2\to \mathcal H_1$ with poles of finite rank. (The divisor
is the set of all points where $T$ is not invertible.)
\end{prop}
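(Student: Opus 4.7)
\noindent\textbf{Proof plan for Proposition~\ref{l:aft-2}.}
The strategy is the two-variable version of the standard Grushin/Schur complement reduction used to prove the one-variable analytic Fredholm theorem. Fix a point $(\omega_0,\lambda_0)$ at which $T(\omega_0,\lambda_0)$ fails to be invertible; we construct the required local factorization $T^{-1}=S/X$ in a neighborhood of this point, and away from such points $T^{-1}$ is holomorphic by the implicit function theorem for the inverse.

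Since $T(\omega_0,\lambda_0)$ is Fredholm, its kernel and cokernel are finite-dimensional of the same dimension $N$ (after possibly enlarging $N$ so that the index argument is uniform). Choose linear maps $R_-\colon \mathbb C^N\to \mathcal H_1$ and $R_+\colon \mathcal H_2\to \mathbb C^N$ such that the bordered operator
$$
\mathcal T(\omega,\lambda)=\begin{pmatrix}T(\omega,\lambda)&R_-\\ R_+&0\end{pmatrix}\colon \mathcal H_1\oplus\mathbb C^N\to\mathcal H_2\oplus\mathbb C^N
$$
is invertible at $(\omega_0,\lambda_0)$. By openness of invertibility, $\mathcal T(\omega,\lambda)$ remains invertible in a neighborhood $U$ of $(\omega_0,\lambda_0)$, and its inverse
$$
\mathcal T(\omega,\lambda)^{-1}=\begin{pmatrix}E(\omega,\lambda)&E_+(\omega,\lambda)\\ E_-(\omega,\lambda)&E_{-+}(\omega,\lambda)\end{pmatrix}
$$
is a holomorphic family on $U$ by the standard Neumann series argument applied to $\mathcal T$. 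The Schur complement formula shows that $T(\omega,\lambda)$ is invertible iff the finite matrix $E_{-+}(\omega,\lambda)\in \mathbb C^{N\times N}$ is invertible, and in that case
$$
T(\omega,\lambda)^{-1}=E(\omega,\lambda)-E_+(\omega,\lambda)\,E_{-+}(\omega,\lambda)^{-1}\,E_-(\omega,\lambda).
$$
Setting $X(\omega,\lambda)=\det E_{-+}(\omega,\lambda)$ and using Cramer's rule to write $E_{-+}^{-1}=\operatorname{adj}(E_{-+})/X$, we obtain $T^{-1}=S/X$ with $S=X\cdot E-E_+\operatorname{adj}(E_{-+})E_-$ a holomorphic family of operators $\mathcal H_2\to\mathcal H_1$ on $U$, and the local divisor given by $\{X=0\}$.

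The remaining issue, and the one subtle step, is the $\omega$-nondegeneracy: for each $\omega$ close to $\omega_0$, one must exhibit some $\lambda$ with $X(\omega,\lambda)\neq 0$. For this one fixes $\omega$ and considers $T(\omega,\cdot)$ as a holomorphic family of Fredholm operators in the single variable $\lambda\in\mathbb C$. By hypothesis this family is invertible at some $\lambda$, so the classical one-variable analytic Fredholm theorem applies and its set of noninvertibility is discrete in $\lambda$. Hence $X(\omega,\cdot)$, which locally vanishes exactly on this set, cannot be identically zero near $\lambda_0$. This verifies Definition~\ref{d:mer2}, so $T^{-1}$ is meromorphic in the required sense, with divisor equal to the set where $T$ is not invertible.

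Finally, the finite-rank condition (Definition~\ref{d:2-mer-fin}) is built in: the singular part of $T^{-1}$ near $(\omega_0,\lambda_0)$ comes entirely from the term $E_+\operatorname{adj}(E_{-+})E_-/X$, and the operator $E_+\operatorname{adj}(E_{-+})E_-\colon \mathcal H_2\to\mathcal H_1$ factors through $\mathbb C^N$, hence has rank at most $N$. Differentiating in $\omega$ (and in $\lambda$) preserves this factorization through $\mathbb C^N$, so every Laurent coefficient of every $\omega$-derivative at a pole is finite-rank. The main technical obstacle in carrying this out cleanly is just the bookkeeping for the Grushin reduction and making sure the nondegeneracy step correctly invokes the one-variable theory; no analytic input beyond classical Fredholm theory is required.
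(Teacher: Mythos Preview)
Your proposal is correct and takes essentially the same approach as the paper, which simply refers the reader to the standard Grushin/Schur complement proof of analytic Fredholm theory (citing~\cite[Theorem~C.3]{e-z}); you have written out precisely those details, including the one extra ingredient needed here, namely the $\omega$-nondegeneracy check via the one-variable theorem applied at each fixed $\omega$.
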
 
\begin{proof}
We can use the proof of the standard Analytic Fredholm Theory via Grushin problems,
see for example \cite[Theorem C.3]{e-z}.
\end{proof}

We now go back to constructing the inverse to $P(\omega)$. We assume that
\begin{enumerate}
\item[(A)]
$R_j(\omega,\lambda)$, $j=1,2$, are two
families of bounded operators on $\mathcal H_j$
with poles of finite rank. Here $\omega$ lies in
a domain $\Omega\subset \mathbb C$ and
$\lambda\in \mathbb C$.
\end{enumerate}

We want to integrate the tensor product $R_1\otimes R_2$ in $\lambda$
over a contour $\gamma$ that separates the sets of poles of
$R_1(\omega,\cdot)$ and $R_2(\omega,\cdot)$.  Let $Z_j$ be the divisor
of $R_j$.  We call a point $\omega$ \textbf{regular} if the sets
$Z_1(\omega)$ and $Z_2(\omega)$ given by
$$
Z_j(\omega)=\{\lambda\in \mathbb C\mid (\omega,\lambda)\in Z_j\}
$$
do not intersect. The behavior of the contour $\gamma$ at infinity is given by the following
\begin{defi}\label{d:admissible}
Let $\psi\in (0,\pi)$ be a fixed angle, and let $\omega$ be a regular
point.  A smooth simple contour $\gamma$ on $\mathbb C$ is called
\textbf{admissible} (at $\omega$) if:
\begin{itemize}
\item outside of some compact subset of $\mathbb C$, $\gamma$ is given by the rays
$\arg\lambda=\pm\psi$, and
\item $\gamma$ separates $\mathbb C$ into two regions, $\Gamma_1$ and $\Gamma_2$,
such that sufficiently large positive real numbers lie in $\Gamma_2$,
and $Z_j(\omega)\subset\Gamma_j$ for $j=1,2$. 
\end{itemize}
(Henceforth, we assume that $\arg\lambda\in [-\pi,\pi]$. The contour $\gamma$ and the regions
$\Gamma_j$ are allowed to have several connected components.)
\end{defi}

Existence of admissible contours and convergence of the integral is
guaranteed by the following condition:

\begin{enumerate}
\item[(B)] For any compact $K_\omega\subset \Omega$, there exist constants
$C$ and $R$ such that for $\omega\in K_\omega$ and $|\lambda|\geq R$,
\begin{itemize}
\item for $|\arg\lambda|\leq\psi$, we have
$(\omega,\lambda)\not\in Z_1$ and $\|R_1(\omega,\lambda)\|\leq C/|\lambda|$, and
\item for $|\arg\lambda|\geq\psi$, we have
$(\omega,\lambda)\not\in Z_2$ and $\|R_2(\omega,\lambda)\|\leq C/|\lambda|$.
\end{itemize}
\end{enumerate}

\begin{figure}
\includegraphics{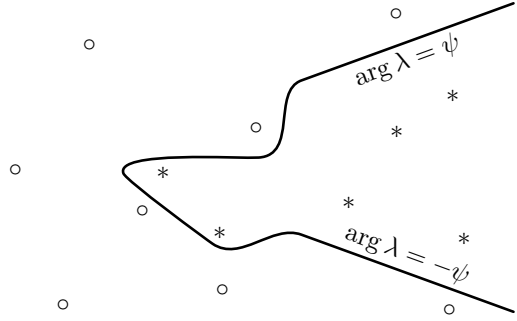}
\caption{An admissible contour. The poles of $R_1$ are denoted by circles and
the poles of $R_2$ are denoted by asterisks.}
\end{figure}

It follows from (B) that there exist admissible contours at every
regular point.  Take a regular point $\omega$, an admissible contour
$\gamma$ at $\omega$, and define
\begin{equation}\label{e:r-omega-int}
R(\omega)={1\over 2\pi i}\int_{\gamma} R_1(\omega,\lambda) \otimes R_2(\omega,\lambda)\,d\lambda.
\end{equation}
Here the orientation of $\gamma$ is chosen so that $\Gamma_1$ always
stays on the left.  The integral above converges and is independent of
the choice of an admissible contour $\gamma$.  Moreover, the set of
regular points is open and $R$ is holomorphic on this set.  (We may
represent $R(\omega)$ as a locally uniform limit of the integral over
the intersection of $\gamma$ with a ball whose radius goes to
infinity.)

The main result of this section is
\begin{prop}\label{l:sep-mer} Assume that $\mathcal H_1$ and $\mathcal H_2$ are two Hilbert
spaces, and $\mathcal H=\mathcal H_1 \otimes \mathcal H_2$ is their Hilbert tensor product.
Let $R_1(\omega,\lambda)$ and $R_2(\omega,\lambda)$ be two families of bounded operators
on $\mathcal H_1$ and $\mathcal H_2$, respectively, for $\omega\in\Omega\subset \mathbb C$
and $\lambda\in \mathbb C$. Assume that $R_1$ and $R_2$ satisfy assumptions
(A)--(B) and the nondegeneracy assumption
\begin{enumerate}
\item[(C)] The set $\Omega_R$ of all regular points is nonempty.
\end{enumerate}
Then the set of all non-regular points is discrete
and the operator $R(\omega)$ defined by \eqref{e:r-omega-int}
is meromorphic in $\omega\in\Omega$ with poles of finite rank. 
\end{prop}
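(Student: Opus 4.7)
My plan is a local analysis near each non-regular point $\omega_0$ based on Weierstrass preparation applied to the divisor-defining functions of $R_1$ and $R_2$. First, assumption (B) confines $Z_1(\omega_0)$ and $Z_2(\omega_0)$ to complementary sectors outside a compact set, so their common zeros $\lambda_0^{(1)},\dots,\lambda_0^{(N)}$ form a finite set. Near each $(\omega_0,\lambda_0^{(j)})$, Definition~\ref{d:mer2} lets me write $R_i=S_i/X_i$ with $X_i$ holomorphic, and the $\omega$-nondegeneracy guarantees $X_i(\omega_0,\cdot)\not\equiv 0$, so Weierstrass preparation factors $X_i=U_iW_i$ with $U_i$ a unit and $W_i(\omega,\lambda)$ a distinguished polynomial in $\lambda$; setting $\tilde S_i=S_i/U_i$ gives $R_i=\tilde S_i/W_i$ with $\tilde S_i$ holomorphic.

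For discreteness, I would form the resultant $\rho_j(\omega)=\mathrm{Res}_\lambda(W_1,W_2)$, a holomorphic function near $\omega_0$ whose vanishing detects common $\lambda$-roots of $W_1$ and $W_2$. Either $\rho_j\not\equiv 0$ for every $j$, in which case its zeros are isolated and $\omega_0$ is an isolated non-regular point; or $\rho_j\equiv 0$ for some $j$, in which case $W_1$ and $W_2$ share a common factor whose zero set projects surjectively onto a neighborhood of $\omega_0$, making $\omega_0$ an interior point of the non-regular set. Since removing a discrete set from a planar domain preserves connectedness, the set $\Omega$ with the isolated non-regular points removed is connected and splits as a disjoint union of the open sets $\Omega_R$ and the interior of the non-regular set; by (C) the former is nonempty, so the latter must be empty and every non-regular point is isolated.

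For meromorphy and finite rank of poles at such an isolated $\omega_0$, I would deform the admissible contour $\gamma$ at nearby regular $\omega$ into an outer piece $\gamma_{\mathrm{out}}$, which stays outside small disks $D_j$ around the $\lambda_0^{(j)}$ and is admissible for $\omega_0$ on their complement, plus small separating contours inside each $D_j$. The outer piece contributes a holomorphic term to $R(\omega)$ near $\omega_0$; for the $j$-th inner piece $R_{(j)}(\omega)$, the Bezout identity for the resultant furnishes $A,B$ polynomial in $\lambda$ with coefficients holomorphic in $\omega$ such that $AW_1+BW_2=\rho_j(\omega)$, which rewrites $1/(W_1W_2)=\rho_j^{-1}(A/W_2+B/W_1)$ and yields
$$R_{(j)}(\omega)=I_j(\omega)/\rho_j(\omega),$$
where $I_j$ is a holomorphic operator-valued function near $\omega_0$. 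This establishes meromorphy. For the finite-rank property I would compute $R_{(j)}(\omega)$ by residues for regular $\omega$: the singular behavior at $\omega_0$ arises only when a root $\mu$ of $W_1$ approaches a root $\nu$ of $W_2$, producing a principal part built from $\mathrm{Res}_\mu(R_1)\otimes\mathrm{Res}_\nu(R_2)$ up to scalar factors; since both residues are finite rank by (A), so is every Laurent coefficient of $R_{(j)}$ at $\omega_0$. The same reasoning applied to $\omega$-derivatives completes the check of Definition~\ref{d:2-mer-fin}.

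I expect the main obstacle to be the residue bookkeeping when $W_1$ and $W_2$ have higher-multiplicity roots and several of these roots collapse to the same $\lambda_0^{(j)}$ at $\omega_0$; in this regime the naive simple-pole calculation breaks down and one must exploit the polynomial identity $AW_1+BW_2=\rho_j$ together with the structure of the holomorphic families $\tilde S_i$ near the roots of $W_i$ to keep the Laurent coefficients finite rank.
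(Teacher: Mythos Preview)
Your approach is essentially the same as the paper's. Both localize near each $\lambda_0^{(j)}\in Z_1(\omega_0)\cap Z_2(\omega_0)$, apply Weierstrass preparation to get $R_i=\tilde S_i/W_i$ with $W_i$ a distinguished polynomial in $\lambda$, use the Bezout/resultant relation between $W_1$ and $W_2$ to isolate a scalar meromorphic factor, and split the admissible contour into an outer holomorphic piece plus small closed contours inside the disks. Your resultant $\rho_j(\omega)$ is the determinant of the linear system the paper solves by Cramer's rule (its Proposition on $1=p_1Q_1+p_2Q_2$), so the discreteness and meromorphy arguments are the same in substance. Your connectedness argument (remove isolated non-regular points, then $\Omega_R$ and the interior of the non-regular set are disjoint open sets covering a connected domain) is a harmless variant of the paper's open--closed argument for $\overline{\Omega_R}$.

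The one place where the paper does something cleaner is precisely the obstacle you flag. Instead of computing residues of $R_1\otimes R_2$ at colliding roots, the paper stays with the integral representation: after the Bezout substitution, the $j$-th inner contribution becomes
\[
\sum_{m} p_{1,j,m}(\omega)\int_{\partial D_j}(\lambda-\lambda_0^{(j)})^m\,\tilde S_1(\omega,\lambda)\otimes R_2(\omega,\lambda)\,d\lambda,
\]
with the $p_{1,j,m}$ scalar meromorphic and the integrals holomorphic in $\omega$ near $\omega_0$. At $\omega=\omega_0$ (and for any $\omega$-derivative) the integrand has $\tilde S_1$ holomorphic in $\lambda$, so only the principal Laurent part of $\partial_\omega^b R_2(\omega_0,\cdot)$ at $\lambda_0^{(j)}$ contributes; hence each Laurent coefficient of $R_{(j)}$ has image contained in $\mathcal H_1\otimes V_2$ for some finite-dimensional $V_2$. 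The symmetric rewriting (using $p_2$ and $R_1\otimes\tilde S_2$, via the identity $\int_{\partial D_j}-\int_{\gamma_j}$) gives image contained in $V_1\otimes\mathcal H_2$. Intersecting, the image is finite-dimensional. This sidesteps all bookkeeping with multiple or higher-order roots, so you should replace your residue sketch with this two-sided containment argument.
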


The rest of this section contains the proof of
Proposition~\ref{l:sep-mer}. First, let us establish a normal form for
meromorphic decompositions of families in two variables:

\begin{prop}
Let $T(\omega,\lambda)$ be meromorphic (with values in some Banach space) and
assume that $(\omega_0,\lambda_0)$ lies in the divisor of $T$. Then
we can write near $(\omega_0,\lambda_0)$
$$
T(\omega,\lambda)={S(\omega,\lambda)\over Q(\omega,\lambda)},
$$
where $S$ is holomorphic and $Q$ is a monic polynomial in $\lambda$ of degree $N$
 and  coefficients holomorphic in $\omega$; moreover, $Q(\omega_0,\lambda)=(\lambda-\lambda_0)^N$.
The divisor of $T$ coincides with the set of zeroes of $Q$ near $(\omega_0,\lambda_0)$.
\end{prop}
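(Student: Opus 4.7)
The plan is to apply the classical Weierstrass Preparation Theorem to the scalar denominator furnished by Definition~\ref{d:mer2}. By that definition, we can write near $(\omega_0,\lambda_0)$
$$
T(\omega,\lambda)={S_0(\omega,\lambda)\over X_0(\omega,\lambda)},
$$
where $S_0$ is holomorphic with values in the Banach space, $X_0$ is a scalar holomorphic function of two variables, the divisor of $T$ coincides with $\{X_0=0\}$ near $(\omega_0,\lambda_0)$, and $X_0(\omega_0,\lambda_0)=0$ since $(\omega_0,\lambda_0)$ lies in the divisor.

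The first step is to check that $X_0(\omega_0,\cdot)$ is not identically zero as a function of $\lambda$ in any neighborhood of $\lambda_0$. This is precisely what the first bullet of Definition~\ref{d:mer2}(2) asserts, applied at $\omega=\omega_0$: there exists $\lambda$ arbitrarily close to $\lambda_0$ with $X_0(\omega_0,\lambda)\neq 0$. Consequently $X_0(\omega_0,\lambda)$ has a zero of some finite order $N\geq 1$ at $\lambda=\lambda_0$, i.e.\ $X_0$ is \emph{regular of order $N$} in $\lambda$ at $(\omega_0,\lambda_0)$.

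Next, after the translation $\lambda\mapsto \lambda-\lambda_0$ (which we undo at the end), the Weierstrass Preparation Theorem applies and yields a unique factorization
$$
X_0(\omega,\lambda)=U(\omega,\lambda)\,Q(\omega,\lambda),
$$
where $U$ is scalar holomorphic and nonvanishing near $(\omega_0,\lambda_0)$, and $Q$ is a Weierstrass polynomial in $\lambda$,
$$
Q(\omega,\lambda)=\lambda^N+q_{N-1}(\omega)\lambda^{N-1}+\cdots+q_0(\omega),
$$
with coefficients $q_j$ holomorphic in $\omega$ and satisfying $q_j(\omega_0)=0$ after the translation; shifting back, $Q$ becomes monic of degree $N$ in $\lambda$ with holomorphic coefficients in $\omega$ and with $Q(\omega_0,\lambda)=(\lambda-\lambda_0)^N$, as required.

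Finally, I would set $S=S_0/U$; since $U$ is a scalar nonvanishing holomorphic function, $S$ is a holomorphic function with values in the Banach space near $(\omega_0,\lambda_0)$, and
$$
T={S_0\over X_0}={S_0\over U\cdot Q}={S\over Q}.
$$
The divisor of $T$ is $\{X_0=0\}$, and since $U$ is nonvanishing, this equals $\{Q=0\}$ near $(\omega_0,\lambda_0)$. The only delicate point in the argument is the verification that the hypothesis of Weierstrass Preparation holds, i.e.\ that $X_0(\omega_0,\cdot)$ does not vanish identically in $\lambda$; this is exactly what the nondegeneracy clause in Definition~\ref{d:mer2}(2) was designed to guarantee, so the rest is a direct citation.
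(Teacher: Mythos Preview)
Your proof is correct and follows exactly the approach the paper indicates: the paper's own proof is the single sentence ``Follows from Definition~\ref{d:mer2} and Weierstrass Preparation Theorem,'' and you have simply spelled out those details.
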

\begin{proof}
Follows from Definition~\ref{d:mer2} and Weierstrass Preparation Theorem.
\end{proof}

\begin{prop}\label{l:coprime}
Assume that $Q_j(\omega,\lambda)$, $j=1,2$, are two monic polynomials
in $\lambda$ of degrees $N_j$ with coefficients holomorphic in
$\omega$ near $\omega_0$.  Assume also that for some $\omega$, $Q_1$
and $Q_2$ are coprime as polynomials.  Then there exist unique
polynomials $p_1$ and $p_2$ of degree no more than $N_2-1$ and
$N_1-1$, respectively, with coefficients meromorphic in $\omega$ and
such that
$$
1=p_1Q_1+p_2Q_2
$$
when $p_1$ and $p_2$ are well-defined.
\end{prop}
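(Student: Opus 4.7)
The plan is to turn the Bezout-style identity $1 = p_1 Q_1 + p_2 Q_2$, together with the degree bounds $\deg_\lambda p_1 \leq N_2-1$ and $\deg_\lambda p_2 \leq N_1-1$, into a square linear system whose coefficient matrix depends holomorphically on $\omega$, and then invert it by Cramer's rule. Write $p_1(\omega,\lambda) = \sum_{j=0}^{N_2-1} a_j(\omega)\lambda^j$ and $p_2(\omega,\lambda) = \sum_{j=0}^{N_1-1} b_j(\omega)\lambda^j$. Expanding $p_1Q_1+p_2Q_2$ as a polynomial in $\lambda$ of degree at most $N_1+N_2-1$ and equating coefficients against those of the constant polynomial $1$ yields an $(N_1+N_2)\times(N_1+N_2)$ linear system in the unknowns $a_0,\ldots,a_{N_2-1},b_0,\ldots,b_{N_1-1}$. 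Its coefficient matrix is the Sylvester matrix $\Sigma(\omega)$ of $Q_1$ and $Q_2$, whose entries are coefficients of $Q_1$ and $Q_2$ (or $0$), and are therefore holomorphic in $\omega$ near $\omega_0$.

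The determinant $\det\Sigma(\omega)$ is, up to sign, the resultant $\mathrm{Res}_\lambda(Q_1,Q_2)(\omega)$. Standard elimination theory identifies its vanishing set with the $\omega$ for which $Q_1(\omega,\cdot)$ and $Q_2(\omega,\cdot)$ share a root, i.e., fail to be coprime in $\mathbb C[\lambda]$. The hypothesis supplies some $\omega$ at which they are coprime, so $\det\Sigma$ is a holomorphic function not identically zero near $\omega_0$. Cramer's rule then produces each $a_j(\omega)$ and $b_j(\omega)$ as a ratio of a holomorphic function by $\det\Sigma(\omega)$, hence meromorphic in $\omega$ near $\omega_0$, and the identity $1=p_1Q_1+p_2Q_2$ holds at every $\omega$ with $\det\Sigma(\omega)\neq 0$, which is precisely the set on which $p_1, p_2$ are well-defined.

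For uniqueness, suppose $(p_1',p_2')$ is another pair obeying the same degree bounds and satisfying the identity at some regular $\omega$. Then $(p_1-p_1')Q_1 = -(p_2-p_2')Q_2$, and coprimality of $Q_1, Q_2$ at that $\omega$ forces $Q_2 \mid (p_1-p_1')$ in $\mathbb C[\lambda]$; the bound $\deg_\lambda(p_1-p_1')\leq N_2-1 < N_2 = \deg_\lambda Q_2$ yields $p_1=p_1'$ and hence $p_2=p_2'$. I do not foresee a substantive obstacle: the only conceptual step is to recognize that the prescribed degree constraints convert Bezout's identity in $\mathbb C(\omega)[\lambda]$ into a square linear system with matrix $\Sigma(\omega)$, and that the assumption of coprimality at one point is exactly what guarantees $\det\Sigma \not\equiv 0$ and hence meromorphic solvability.
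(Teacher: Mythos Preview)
Your proof is correct and follows exactly the paper's approach: set up the $(N_1+N_2)\times(N_1+N_2)$ linear system for the coefficients of $p_1,p_2$, observe that coprimality at one $\omega$ forces the determinant to be not identically zero, and apply Cramer's rule. You supply more detail than the paper does (identifying the matrix as the Sylvester matrix, its determinant as the resultant, and spelling out the uniqueness argument), but the underlying method is identical.
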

\begin{proof}
The $N_1+N_2$ coefficients of $p_1$ and $p_2$ solve a system of
$N_1+N_2$ linear equations with fixed right-hand side and the matrix
$A(\omega)$ depending holomorphically on $\omega$.  If $\omega$ is
chosen so that $Q_1$ and $Q_2$ are coprime, then the system has a
unique solution; therefore, the determinant of $A(\omega)$ is not
identically zero. The proposition then follows from Cramer's Rule.
\end{proof}

We are now ready to prove that $R(\omega)$ is meromorphic.  It
suffices to show that for each $\omega_0\not\in\Omega_R$ lying in the
closure $\overline{\Omega_R}$, $\omega_0$ is an isolated non-regular
point and $R(\omega)$ has a meromorphic decomposition at $\omega_0$
with finite-dimensional principal part. Indeed, in this case
$\overline{\Omega_R}$ is open; since it is closed and nonempty by (C),
we have $\overline{\Omega_R}=\Omega$ and the statement above applies
to each $\omega_0$.

Let $Z_1(\omega_0)\cap Z_2(\omega_0)=\{\lambda_1,\dots,\lambda_m\}$.
We choose a ball $\Omega_0$ centered at $\omega_0$ and disjoint balls
$U_l$ centered at $\lambda_l$ such that:
\begin{itemize}
\item for $\omega\in\Omega_0$, the set $Z_1(\omega)\cap Z_2(\omega)$
is covered by balls $U_l$ and the set $Z_1(\omega)\cup Z_2(\omega)$
does not intersect the circles $\partial U_l$;
\item for $\omega\in\Omega_0$ and $\lambda\in U_l$, we have $R_j=S_{jl}/Q_{jl}$,
where $S_{jl}$ are holomorphic and $Q_{jl}$ are monic polynomials in $\lambda$
of degree $N_{jl}$ with coefficients
holomorphic in $\omega$, and $Q_{jl}(\omega_0,\lambda)=(\lambda-\lambda_l)^{N_{jl}}$;
\item for $\omega\in\Omega_0$, the set of all roots of  $Q_{jl}(\omega,\cdot)$  coincides with
$Z_j(\omega)\cap U_l$;
\item there exists a contour $\gamma_0$ that does not intersect any $U_l$ and is admissible
for any $\omega\in\Omega_0$
with respect to the sets $Z_j(\omega)\setminus\cup U_l$ in place of $Z_j(\omega)$; moreover,
each $\partial U_l$ lies in the region $\Gamma_1$ with respect to $\gamma_0$ (see Definition~\ref{d:admissible}). 
\end{itemize}

Let us assume that $\omega\in\Omega_0$ is regular. (Such points exist
since $\omega_0$ lies in the closure of $\Omega_R$.) For every $l$,
the polynomials $Q_{1l}(\omega,\lambda)$ and $Q_{2l}(\omega,\lambda)$
are coprime; we find by Proposition~\ref{l:coprime} unique polynomials
$p_{1l}(\omega,\lambda)$ and $p_{2l}(\omega,\lambda)$ such that
$$
1=p_{1l}Q_{1l}+p_{2l}Q_{2l}
$$
and $\deg p_{1l}<N_{2l}$, $\deg p_{2l}<N_{1l}$. The converse is also
true: if all coefficients of $p_{1l}$ and $p_{2l}$ are holomorphic at
some point $\omega$ for all $l$, then $\omega$ is a regular point. It
follows immediately that $\omega_0$ is an isolated non-regular point.

To obtain the meromorphic expansion of $R(\omega)$ near $\omega_0$,
let us take a regular point $\omega\in \Omega_0$ and an admissible
contour $\gamma=\gamma_0+\cdots+\gamma_m$, where $\gamma_0$ is the
$\omega$-independent contour defined above and each $\gamma_l$ is a
contour lying in $U_l$. The integral over $\gamma_0$ is holomorphic
near $\omega_0$, while
$$
\begin{gathered}
\int_{\gamma_l} R_1(\omega,\lambda) \otimes R_2(\omega,\lambda)\,d\lambda\\
=\int_{\gamma_l} S_{1l}(\omega,\lambda) \otimes S_{2l}(\omega,\lambda)
\left(
{p_{1l}(\omega,\lambda)\over Q_{2l}(\omega,\lambda)}
+{p_{2l}(\omega,\lambda)\over Q_{1l}(\omega,\lambda)}
\right)\,d\lambda\\
=\int_{\partial U_l}
p_{1l}S_{1l} \otimes R_2\,d\lambda
=\sum_{j=0}^{N_{2l}-1} p_{1lj}(\omega)\int_{\partial U_l}
(\lambda-\lambda_l)^j S_{1l} \otimes R_2\,d\lambda. 
\end{gathered} 
$$
Here $p_{1lj}(\omega)$ are the coefficients of $p_{1l}$ as a
polynomial of $\lambda-\lambda_l$; they are meromorphic in $\omega$
and the rest is holomorphic in $\omega\in\Omega_0$.

It remains to prove that $R$ has poles of finite rank. It suffices to
show that every derivative in $\omega$ of the last integral above at
$\omega=\omega_0$ has finite rank. Each of these, in turn, is a finite
linear combination of
$$
\int_{\partial U_l}(\lambda-\lambda_l)^j \partial^a_\omega S_{1l}(\omega_0,\lambda)\otimes \partial^b_\omega
R_2(\omega_0,\lambda)\,d\lambda.
$$
However, since $\partial^a_\omega S_{1l}(\omega_0,\lambda)$ is
holomorphic in $\lambda\in U_l$, only the principal part of the
Laurent decomposition of $\partial^b_\omega R_2(\omega_0,\lambda)$ at
$\lambda=\lambda_l$ will contribute to this integral; therefore, the
image of each operator in the principal part of Laurent decomposition
of $R(\omega)$ at $\omega_0$ lies in $\mathcal H_1 \otimes V_2$, where
$V_2$ is a certain finite-dimensional subspace of $\mathcal H_2$. It
remains to show that each of these images also lies in $V_1 \otimes
\mathcal H_2$, where $V_1$ is a certain finite-dimensional subspace of
$\mathcal H_1$.  This is done by the same argument, using the fact
that
$$
\int_{\partial U_l-\gamma_l}R_1(\omega,\lambda) \otimes R_2(\omega,\lambda)\,d\lambda
$$
can be written in terms of $p_{2l}$ and $R_1 \otimes S_{2l}$ and the
integral over $\partial U_l$ is holomorphic at $\omega_0$. The proof
of Proposition~\ref{l:sep-mer} is finished.

%%%%%%%%%%%%%%%%%%%%%%%%%%%%%%%%%%%%%%%%%%%%%%%%%%%%%%%%%%%%%%%%%%%%%%%%%%%%%%%%
%                            PROOFS OF THE THEOREMS                            %
%%%%%%%%%%%%%%%%%%%%%%%%%%%%%%%%%%%%%%%%%%%%%%%%%%%%%%%%%%%%%%%%%%%%%%%%%%%%%%%%
\section{Construction of $R_g(\omega)$}

As we saw in the previous section, one can deduce the existence of an
inverse to $P_g=P_r+P_\theta$ and its properties from certain
properties of the inverses to $P_r+\lambda$ and $P_\theta-\lambda$ for
$\lambda\in \mathbb C$. We start with the latter. For $a=0$,
$P_\theta$ is the (negative) Laplace--Beltrami operator for the round
metric on $\mathbb S^2$; therefore, its eigenvalues are given by
$\lambda=l(l+1)$ for $l\in \mathbb Z$, $l\geq 0$.  Moreover, if $\mathcal D'_k$
is the space defined in~\eqref{e:d-prime-k} and there
is an eigenfunction of $P_\theta|_{\mathcal D'_k}$ with eigenvalue
$l(l+1)$, then $l\geq k$. These observations can be generalized to our
case:

\begin{prop}\label{l:angular}
There exists a two-sided inverse
$$
R_\theta(\omega,\lambda)=(P_\theta(\omega)-\lambda)^{-1}
:L^2(\mathbb S^2)\to H^2(\mathbb S^2),\
(\omega,\lambda)\in \mathbb C^2,
$$
with the following properties:

1. $R_\theta(\omega,\lambda)$ is meromorphic with poles of finite rank
in the sense of Definition~\ref{d:2-mer-fin} and it has the following
meromorphic decomposition at $\omega=\lambda=0$:
\begin{equation}\label{e:r-theta-zero}
R_\theta(\omega,\lambda)={S_{\theta0}(\omega,\lambda)\over \lambda-\lambda_\theta(\omega)}
\end{equation}
where $S_{\theta 0}$ and $\lambda_\theta$ are holomorphic in $a$-independent neighborhoods
of zero and
$$
S_{\theta 0}(0,0)=-{1 \otimes 1\over 4\pi},\
\lambda_\theta(\omega)=O(|\omega|^2).
$$

2. There exists a constant $C_\theta$ such that
\begin{equation}\label{e:angular-est-1}
\|R_\theta(\omega,\lambda)\|_{L^2(\mathbb S^2)\cap \mathcal D'_k
\to L^2(\mathbb S^2)}\leq {C_\theta\over |k|^2}
\text{ for }
|\lambda|\leq k^2/2,\ |k|\geq C_\theta|a\omega|.
\end{equation}
and
\begin{equation}\label{e:angular-est-2}
\begin{gathered}
\|R_\theta(\omega,\lambda)\|_{L^2(\mathbb S^2)\cap \mathcal D'_k
\to L^2(\mathbb S^2)}\leq {2\over |\Imag\lambda|}\\
\text{for }|\Imag\lambda|> C_\theta|a|(|a\omega|+|k|)|\Imag\omega|.
\end{gathered}
\end{equation}

3. For every $\psi>0$, there exists a constant $C_\psi$ such that
\begin{equation}\label{e:angular-est-3}
\|R_\theta(\omega,\lambda)\|_{L^2(\mathbb S^2)\to L^2(\mathbb S^2)}
\leq {C_\psi\over |\lambda|}
\text{ for }
|\arg\lambda|\geq\psi,\ |\lambda|\geq C_\psi|a\omega|^2.
\end{equation}
\end{prop}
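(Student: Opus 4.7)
My plan is to construct $R_\theta(\omega,\lambda)$ as a meromorphic family via Analytic Fredholm Theory, read off the decomposition at the origin from analytic perturbation theory of a simple isolated eigenvalue, and then establish the three uniform bounds by quadratic-form arguments in each angular-momentum subspace $\mathcal D'_k$.

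For meromorphy: the operator $P_\theta(\omega)$ has principal symbol equal (up to a smooth positive factor) to that of the round Laplacian on $\mathbb S^2$, and its coefficients depend polynomially on $\omega$; hence $P_\theta(\omega)-\lambda\colon H^2(\mathbb S^2)\to L^2(\mathbb S^2)$ is a holomorphic family of Fredholm operators on $\mathbb C^2$. At $(\omega,\lambda)=(0,-1)$ the operator $P_\theta(0)+1$ is positive self-adjoint, hence invertible, so Proposition~\ref{l:aft-2} supplies $R_\theta(\omega,\lambda)$ as a meromorphic inverse with finite-rank poles in the sense of Definition~\ref{d:2-mer-fin}. For the decomposition~\eqref{e:r-theta-zero}: $P_\theta(0)$ is self-adjoint and nonnegative with simple isolated zero eigenvalue and eigenspace spanned by the constant $1$. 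Analytic perturbation theory produces, on an $a$-uniform neighborhood of $\omega=0$, a simple eigenvalue $\lambda_\theta(\omega)$ and a rank-one Riesz projection $\Pi(\omega)$ with $\lambda_\theta(0)=0$ and $\Pi(0)=(1\otimes 1)/(4\pi)$, the $L^2$-projection onto constants for the round measure $\sin\theta\,d\theta d\varphi$ of total mass $4\pi$. Writing $R_\theta=-\Pi(\omega)/(\lambda-\lambda_\theta(\omega))+H(\omega,\lambda)$ with $H$ holomorphic near $(0,0)$ and clearing the pole gives~\eqref{e:r-theta-zero} with $S_{\theta 0}(0,0)=-(1\otimes 1)/(4\pi)$. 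The identity $\partial_\omega P_\theta|_{\omega=0}=-2a(1+\alpha)^2\Delta_\theta^{-1}D_\varphi$ annihilates constants, so first-order perturbation theory gives $\lambda_\theta'(0)=0$ and hence $\lambda_\theta(\omega)=O(|\omega|^2)$.

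For the resolvent bounds, take $u\in L^2(\mathbb S^2)\cap\mathcal D'_k$, so $D_\varphi u=ku$. Integration by parts gives
\begin{equation*}
(P_\theta(\omega)u,u)=\int\Delta_\theta\sin\theta\,|D_\theta u|^2\,d\theta d\varphi+\int\frac{(1+\alpha)^2(a\omega\sin^2\theta-k)^2}{\Delta_\theta\sin^2\theta}\,|u|^2\sin\theta\,d\theta d\varphi.
\end{equation*}
Writing $\omega=\omega_R+i\omega_I$, the real part of the second integrand is $(a\omega_R\sin^2\theta-k)^2-a^2\omega_I^2\sin^4\theta$ and the imaginary part is $2a\omega_I\sin^2\theta(a\omega_R\sin^2\theta-k)$. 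For $|k|\geq C_\theta|a\omega|$ with $C_\theta$ large, the real integrand is pointwise $\geq k^2/2$, giving $\Real(P_\theta u,u)\geq ck^2\|u\|^2$; together with $|\lambda|\leq k^2/2$ and Cauchy--Schwarz this yields~\eqref{e:angular-est-1}. Similarly $|\Imag(P_\theta u,u)|\leq C|a|(|a\omega|+|k|)|\Imag\omega|\,\|u\|^2$, and comparing with $\Imag\lambda$ dominating this bound gives~\eqref{e:angular-est-2}. For~\eqref{e:angular-est-3}, decompose $u$ into $\varphi$-Fourier modes and work on each $\mathcal D'_k$: the numerical range of $P_\theta(\omega,k)$ is contained in $\{(A,B):A\geq -C|a\omega|^2,\ |B|\leq C|a|(|a\omega|+|k|)|\Imag\omega|\}$, and for large $|k|$ it is moreover pushed into $\{A\geq ck^2/2\}$. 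A case split on $\arg\lambda$ then shows that when $|\lambda|\geq C_\psi|a\omega|^2$ and $|\arg\lambda|\geq\psi$, $\lambda$ lies at distance $\geq c_\psi|\lambda|$ from this region uniformly in $k$, giving $\|R_\theta(\omega,k;\lambda)\|\leq C_\psi/|\lambda|$; Parseval in $\varphi$ delivers~\eqref{e:angular-est-3}.

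The main obstacle is making the sectorial estimate~\eqref{e:angular-est-3} uniform in $k$: because $|\Imag(P_\theta u,u)|$ grows like $|k||\Imag\omega|$, the numerical range of $P_\theta(\omega)$ on all of $L^2(\mathbb S^2)$ is not contained in any fixed sector. The resolution is a two-regime argument driven by the condition $|\lambda|\geq C_\psi|a\omega|^2$: for $|k|\lesssim\sqrt{|\lambda|}$ the imaginary fluctuation is $O(|a\omega|\sqrt{|\lambda|})$, which is much smaller than $|\Imag\lambda|\gtrsim|\lambda|\sin\psi$; for $|k|\gtrsim\sqrt{|\lambda|}$ the lower bound $\Real(P_\theta u,u)\gtrsim k^2\gtrsim|\lambda|$ controls the real part of the distance from $\lambda$ to the numerical range. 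Making this quantitative choice of $C_\psi$ (and verifying smoothness of $\lambda_\theta,\Pi$ uniformly in small $a$) is the only delicate point.
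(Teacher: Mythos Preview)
Your proposal is correct and follows the same overall architecture as the paper, with two organizational differences worth noting.

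For Part~1, you invoke analytic perturbation theory and a Riesz projection for the simple eigenvalue of $P_\theta(0)$; the paper instead sets up a Grushin problem with the maps $i_1:\mathbb C\to L^2$ (multiplication by $1$) and $\pi_1:H^2\to\mathbb C$ (integration), inverts the resulting $2\times 2$ block operator, and reads off the decomposition~\eqref{e:r-theta-zero} from Schur's complement and Weierstrass preparation applied to the scalar $B_{22}(\omega,\lambda)=\lambda/(4\pi)+O(|\omega|^2+|\lambda|^2)$. The two are equivalent; the Grushin formulation makes the $a$-uniformity of the neighborhood slightly more explicit.

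For Parts~2 and~3, you argue directly from the quadratic form $(P_\theta(\omega)u,u)$ and the resulting numerical range. The paper instead splits $P_\theta(\omega)=P_\theta(0)+P'_\theta(\omega)$, uses self-adjointness of $P_\theta(0)$ and the bound $P_\theta(0)\geq k^2$ on $\mathcal D'_k$ to get
\[
\|u\|\leq \frac{\|(P_\theta(\omega)-\lambda)u\|}{d(\lambda,\,k^2+\mathbb R^+)-C_1|a\omega|(|a\omega|+|k|)},
\]
and then treats all three estimates as instances of this single inequality. In particular, Part~3 becomes a one-line computation: for $|\arg\lambda|\geq\psi$ one has the geometric bound $d(\lambda,k^2+\mathbb R^+)\geq (k^2+|\lambda|)/C_2$, and the cross term $C_1|a\omega||k|$ is absorbed into $k^2/C_2$ via Young's inequality, leaving a remainder $\geq |\lambda|/C_2-C_3|a\omega|^2$. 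This handles all $k$ uniformly and avoids the two-regime case split you identify as the delicate point. Your numerical-range argument is valid, but the distance-to-spectrum route (available precisely because the perturbation $P'_\theta(\omega)$ is \emph{bounded} on each $\mathcal D'_k$) is shorter.
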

\begin{proof}
1. Recall~\eqref{e:p-r-theta} that $P_\theta(\omega)$ is a holomorphic
family of elliptic second order differential operators on the
sphere. Therefore, for each $\lambda$, the operator
$P_\theta(\omega)-\lambda:H^2(\mathbb S^2)\to L^2(\mathbb S^2)$ is
Fredholm (see for example~\cite[Section~7.10]{t}). By
Proposition~\ref{l:aft-2}, $R_g(\omega,\lambda)$ is a meromorphic
family of operators $L^2\to H^2$.

We now obtain a meromorphic decomposition for $R_\theta$ near zero using the framework of
Grushin problems~\cite[Appendix~C]{e-z}. Let $i_1:\mathbb C\to L^2(\mathbb S^2)$
be the operator of multiplicaton by the constant function $1$
and $\pi_1:H^2(\mathbb S^2)\to \mathbb C$ be the operator mapping
every function to its integral over the standard measure on the round sphere.
Consider the operator $A(\omega,\lambda):H^2 \oplus \mathbb C\to L^2\oplus \mathbb C$ given by
$$
A(\omega,\lambda)=\begin{pmatrix} P_\theta(\omega)-\lambda& i_1\\
\pi_1&0
\end{pmatrix}.
$$
The kernel and cokernel of $P_\theta(0)$ are both one-dimensional and
spanned by $1$, since this is the Laplace--Beltrami operator for a
certain Riemannian metric on the sphere.  (Indeed, by ellipticity
these spaces consist of smooth functions; by self-adjointness, the
kernel and cokernel coincide; one can then apply Green's
formula~\cite[(2.4.8)]{t} to an element of the kernel and itself.)
Therefore~\cite[Theorem~C.1]{e-z}, the operator
$B(\omega,\lambda)=A(\omega,\lambda)^{-1}$ is well-defined at $(0,0)$;
then it is well-defined for $(\omega,\lambda)$ in an $a$-independent
neighborhood of zero. We write
$$
B(\omega,\lambda)=\begin{pmatrix}B_{11}(\omega,\lambda)&B_{12}(\omega,\lambda)\\
B_{21}(\omega,\lambda)&B_{22}(\omega,\lambda)\end{pmatrix}.
$$
Now, by Schur's complement formula we have near $(0,0)$,
$$
R_\theta(\omega,\lambda)=B_{11}(\omega,\lambda)-B_{12}(\omega,\lambda)B_{22}(\omega,\lambda)^{-1}B_{21}(\omega,\lambda).
$$
However, $B_{22}(\omega,\lambda)$ is a holomorphic function of two variables, and
we can find
$$
B_{22}(\omega,\lambda)={\lambda\over 4\pi}+O(|\omega|^2+|\lambda|^2).
$$
(The $\omega$-derivative vanishes at zero since $\partial_\omega P_\omega(0)|_{\mathcal D'_0}=0$.
To compute the $\lambda$-derivative, we use that $B_{12}(0,0)=i_1/4\pi$
and $B_{21}(0,0)=\pi_1/4\pi$.) The decomposition~\eqref{e:r-theta-zero}
now follows by Weierstrass Preparation Theorem.
\smallskip

2. We have $P_\theta(\omega)=P_\theta(0)+P'_\theta(\omega)$, where
$$
P'_\theta(\omega)={(1+\alpha)^2a\omega\over\Delta_\theta}
(-2D_\varphi+a\omega\sin^2\theta)
$$
is a first order differential operator and
$$
P_\theta(0)={1\over\sin\theta}D_\theta(\Delta_\theta\sin\theta D_\theta)
+{(1+\alpha)^2\over\Delta_\theta\sin^2\theta}D_\varphi^2:H^2(\mathbb S^2)\to L^2(\mathbb S^2)
$$
satisfies $P_\theta(0)\geq k^2$ on $\mathcal D'_k$; therefore,
if $u\in H^2(\mathbb S^2)\cap \mathcal D'_k$, then
$$
\|u\|_{L^2}\leq {\|(P_\theta(0)-\lambda)u\|_{L^2}\over d(\lambda,k^2+\mathbb R^+)}.
$$ 
Since
$$
\|P'_\theta(\omega)\|_{L^2(\mathbb S^2)\cap \mathcal D'_k\to L^2(\mathbb S^2)}
\leq 2(1+\alpha)^2|a\omega|(|a\omega|+|k|),
$$
we get
\begin{equation}\label{e:angular-internal}
\|u\|_{L^2}\leq {\|(P_\theta(\omega)-\lambda)u\|_{L^2}\over d(\lambda,k^2+\mathbb R^+)-C_1|a\omega|(|a\omega|+|k|)},
\end{equation}
provided that the denominator is positive. Here $C_1$ is a global constant.

Now, if $|\lambda|\leq k^2/2$, then 
$d(\lambda,k^2+ \mathbb R^+)\geq k^2/2$ and
$$
d(\lambda,k^2+\mathbb R^+)-C_1|a\omega|(|a\omega|+|k|)\geq {k^2\over 4}
\text{ for }|k|\geq 8(1+C_1)|a\omega|;
$$
together with~\eqref{e:angular-internal}, this proves~\eqref{e:angular-est-1}.

To prove~\eqref{e:angular-est-2}, introduce
$$
\Imag P_\theta(\omega)={1\over 2}(P_\theta(\omega)-P_\theta(\omega)^*)={2(1+\alpha)^2\over\Delta_\theta}
a\Imag\omega(a\Real\omega\sin^2\theta-D_\varphi);
$$
we have
$$
\|\Imag P_\theta(\omega)\|_{L^2(\mathbb S^2)\cap \mathcal D'_k\to L^2(\mathbb S^2)}\leq
2(1+\alpha)^2|a\Imag\omega|(|a\omega|+|k|).
$$
However, for $u\in H^2(\mathbb S^2)\cap \mathcal D'_k$,
$$
\begin{gathered}
\|(P_\theta(\omega)-\lambda)u\|\cdot \|u\|
\geq |\Imag ((P_\theta(\omega)-\lambda)u,u)|
\geq |\Imag\lambda|\cdot \|u\|^2-|(\Imag P_\theta(\omega) u,u)|\\
\geq (|\Imag\lambda|-2(1+\alpha)^2|a|(|a\omega|+|k|)|\Imag\omega|)\|u\|^2
\end{gathered}
$$
and we are done if $C_\theta\geq 4(1+\alpha)^2$.
\smallskip

3. If $|\arg\lambda|\geq\psi$, then
$d(\lambda,k^2+\mathbb R^+)\geq (k^2+|\lambda|)/C_2$;
here $C_2$ is a constant depending on $\psi$. We have then
$$
d(\lambda,k^2+\mathbb R^+)-C_1|a\omega|(|a\omega|+|k|)\geq {1\over C_2}|\lambda|-C_3|a\omega|^2
$$
for some constant $C_3$, and we are done by~\eqref{e:angular-internal}.
\end{proof}

The analysis of the radial operator $P_r$ is more complicated.
In Sections~4--6, we prove
\begin{prop}\label{l:radial}
There exists a family of operators
$$
R_r(\omega,\lambda,k):L^2_{\comp}(r_-,r_+)\to H^2_{\loc}(r_-,r_+),\
(\omega,\lambda)\in \mathbb C^2,
$$
with the following properties:

1. For each $k\in \mathbb Z$, $R_r(\omega,\lambda,k)$ is meromorphic
with poles of finite rank in the sense of
Definition~\ref{d:2-mer-fin}, and
$(P_r(\omega,k)+\lambda)R_r(\omega,\lambda,k)f=f$ for each $f\in
L^2_{\comp}(r_-,r_+)$. Also, for $k=0$, $R_r$ admits the following meromorphic
decomposition near $\omega=\lambda=0$:
\begin{equation}\label{e:r-r-zero}
R_r(\omega,\lambda,0)={S_{r0}(\omega,\lambda)\over\lambda-\lambda_r(\omega)},
\end{equation}
where $S_{r0}$ and $\lambda_r$ are holomorphic in $a$-independent neighborhoods of zero and
$$
\begin{gathered}
S_{r0}(0,0)={1 \otimes 1\over r_+-r_-},\\
\lambda_r(\omega)={i(1+\alpha)(r_+^2+r_-^2+2a^2)\over r_+-r_-}\omega+O(|\omega|^2).
\end{gathered}
$$

2. Take $\delta_r>0$. Then there exist $\psi>0$ and $C_r$ such that
for 
\begin{equation}\label{e:radial-cond}
|\lambda|\geq C_r,\
|\arg\lambda|\leq \psi,\
|ak|^2\leq |\lambda|/C_r,\
|\omega|^2\leq |\lambda|/C_r,
\end{equation}
$(\omega,\lambda,k)$ is not a pole of $R_r$ and we have
\begin{equation}\label{e:radial-est}
\|1_{K_r}R_r(\omega,\lambda,k)1_{K_r}\|_{L^2\to L^2}\leq {C_r\over |\lambda|}.
\end{equation}
Also, there exists $\delta_{r0}>0$ such that, if
$K_+=[r_+-\delta_{r0},r_+]$ and $K_-=[r_-,r_-+\delta_{r0}]$,
then for each $N$ there exists a constant $C_N$ such that
under the conditions~\eqref{e:radial-cond}, we have
\begin{equation}\label{e:radial-est-1.5}
\|1_{K_\pm}|r-r_\pm|^{iA_\pm^{-1}(1+\alpha)((r_\pm^2+a^2)\omega-ak)}
R_r(\omega,\lambda,k)1_{K_r}\|_{L^2\to C^N(K_\pm)}\leq {C_N\over |\lambda|^N}.
\end{equation}

3. There exists a constant $C_\omega$ such that
$R_r(\omega,\lambda,k)$ does not have any poles for real $\lambda$ and
real $\omega$ with $|\omega|> C_\omega|ak|$.

4. Assume that $R_r$ has a pole at $(\omega,\lambda,k)$. Then there
exists a nonzero solution $u\in C^\infty(r_-,r_+)$ to the equation
$(P_r(\omega,k)+\lambda)u=0$ such that the functions
$$
|r-r_\pm|^{iA_\pm^{-1}(1+\alpha)((r_\pm^2+a^2)\omega-ak)}u(r)
$$
are real analytic at $r_\pm$, respectively.

5. Take $\delta_r>0$. Then there exists $C_{1r}>0$ such that for
\begin{equation}\label{e:radial-cond-2}
\Imag\omega>0,\
|ak|\leq  |\omega|/C_{1r},\
|\Imag\lambda|\leq |\omega|\cdot\Imag\omega/C_{1r},\
\Real\lambda\geq -|\omega|^2/C_{1r},
\end{equation}
$(\omega,\lambda,k)$ is not a pole of $R_r$ and we have
\begin{equation}\label{e:radial-est-2}
\|1_{K_r}R_r(\omega,\lambda,k)1_{K_r}\|_{L^2\to L^2}\leq {C_{1r}\over |\omega|\Imag\omega}.
\end{equation}
\end{prop}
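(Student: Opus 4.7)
The plan is to perform the Regge--Wheeler change of variables $x(r)$ defined by $dx/dr = (1+\alpha)(r^2+a^2)/\Delta_r$, which maps $(r_-,r_+)$ diffeomorphically onto $\mathbb{R}$, and to conjugate $P_r$ by a suitable weight so that $P_r + \lambda$ becomes a one-dimensional Schr\"odinger operator $P_x = D_x^2 + V_x(x;\omega,\lambda,k)$. The potential $V_x$ extends analytically in $x$ to a strip around $\mathbb{R}$ with limits $V_x(\pm\infty) = -\omega_\pm^2$, where $\omega_\pm = A_\pm^{-1}(1+\alpha)((r_\pm^2+a^2)\omega - ak)$. The outgoing condition of part~4 then translates into the standard one-dimensional outgoing condition $u \sim c_\pm e^{\pm i\omega_\pm x}$ as $x \to \pm\infty$, which is square-integrable precisely when $\Imag \omega_\pm > 0$.

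\textbf{Construction of $R_r$; parts~1 and~4.} In the region $\Imag \omega_\pm > 0$ I would construct Jost-type outgoing solutions $e_\pm(\cdot;\omega,\lambda,k)$ by the standard Volterra iteration and define
$$
R_r f(x) = W(e_-,e_+)^{-1}\Bigl(e_+(x)\int_{-\infty}^x e_-(y) f(y)\,dy + e_-(x)\int_x^{+\infty} e_+(y) f(y)\,dy\Bigr).
$$
Since $V_x$ is entire in $(\omega,\lambda)$ and analytic in $x$, the Jost solutions continue analytically past the real $\omega_\pm$ axis to all of $\mathbb{C}^2$, so $R_r$ extends meromorphically with poles exactly where $W(e_-,e_+) = 0$; this yields part~4 directly. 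Finite-rank of the poles in the sense of Definition~\ref{d:2-mer-fin} follows by rewriting the problem as a bivariate Fredholm family and applying Proposition~\ref{l:aft-2}. For the expansion~\eqref{e:r-r-zero} I would set up a Grushin problem parallel to the one used for $R_\theta$ in Proposition~\ref{l:angular}: the one-dimensional kernel and cokernel of $P_r(0,0)$ on the space encoding outgoing behaviour are spanned by the constant function $1$, and the leading coefficient of $\lambda_r(\omega)$ comes from differentiating the $\omega$-dependent outgoing factor $|r-r_\pm|^{\pm iA_\pm^{-1}(1+\alpha)(r_\pm^2+a^2)\omega}$, which is the only source of $\omega$-linear dependence at the origin.

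\textbf{The semiclassical estimate, part~2.} This is the heart of the proposition and the step I expect to be the main obstacle. Rescale semiclassically with $h = |\lambda|^{-1/2}$, so $h^2(P_x + \lambda)$ has semiclassical symbol $p(x,\xi) = \xi^2 + h^2 V_x - e^{i\arg\lambda}$. Under~\eqref{e:radial-cond} with $\psi$ chosen small, the contributions $h^2\omega^2$ and $h^2(ak)^2$ in $V_x$ are uniformly $O(1/C_r)$, so $p$ is close to $\xi^2 - 1$ and semiclassically elliptic on any compact set in $x$. If $\omega_\pm^2/\lambda$ were bounded away from zero one could close the argument by complex scaling at $x = \pm\infty$, but in the regime $\omega = o(k)$ allowed by~\eqref{e:radial-cond} one has $\omega_\pm \ll |\lambda|^{1/2}$ and the rescaled symbol degenerates at infinity. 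To bypass this, I would use the analyticity of $V_x$ together with the semiclassical argument of Section~6 (Proposition~\ref{l:vertical}) to obtain uniform pointwise control of outgoing solutions at two fixed interior points $x_\pm^0 \in K_r$; an integration-by-parts argument on the compact interval between them then converts the pointwise bounds into the $L^2 \to L^2$ estimate~\eqref{e:radial-est} with constant $C_r/|\lambda|$. Estimate~\eqref{e:radial-est-1.5} near the horizons follows because, after conjugation by $|r-r_\pm|^{iA_\pm^{-1}(1+\alpha)((r_\pm^2+a^2)\omega - ak)}$, the solution is analytic at $r_\pm$, and Cauchy estimates combined with the interior $L^2$ bound and smoothness of the change of variables upgrade the estimate to $C^N$ with gain $|\lambda|^{-N}$.

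\textbf{Parts~3 and~5.} For part~3, with real $\omega,\lambda$ and $|\omega| > C_\omega|ak|$ the numbers $\omega_\pm$ are real and nonzero, so $e_\pm$ are genuinely oscillating at $\pm\infty$; an energy-flux computation using the $x$-independent Wronskian shows that any null solution would have to radiate nonzero energy at both ends, which is impossible. Part~5 is an imaginary-part pairing: $\Imag\langle (P_x - \lambda)u, u\rangle$ produces a bulk term controlled by $|\Imag\lambda|\,\|u\|^2$ and boundary contributions at $\pm\infty$ of size $|\omega|\,\Imag\omega\,|u(\pm\infty)|^2$, since $\Imag(\omega_\pm^2) \sim |\omega|\,\Imag\omega$ whenever $|ak| \leq |\omega|/C_{1r}$; the smallness of $|\Imag\lambda|$ and $\Real\lambda$ in~\eqref{e:radial-cond-2} then makes the boundary term dominate and yields~\eqref{e:radial-est-2}. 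Throughout, the principal difficulty is the degeneracy at infinity in part~2 when $\omega_\pm$ is small, which is precisely what forces the intricate semiclassical machinery of Sections~4--6.
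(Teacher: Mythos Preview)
Your overall strategy matches the paper's, but several details are off and part~5 has a genuine gap.

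\textbf{Setup.} The paper takes $dx/dr=1/\Delta_r$, not $(1+\alpha)(r^2+a^2)/\Delta_r$; then $P_r+\lambda=\Delta_r^{-1}(D_x^2+V_x)$ with no conjugation, and $\omega_\pm=(1+\alpha)((r_\pm^2+a^2)\omega-ak)$ \emph{without} the $A_\pm^{-1}$ (that factor enters the $r$-variable exponent only because $|r-r_\pm|\sim e^{\mp A_\pm x}$). In part~2 the rescaled potential on compacts is close to $+\Delta_r>0$, not to $-1$; ellipticity comes from positivity of the symbol, and the control points $x_\pm$ lie \emph{outside} $K_x$, not inside it.

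\textbf{Part 1, finite rank.} Invoking Proposition~\ref{l:aft-2} is not straightforward: $R_r$ acts $L^2_{\comp}\to H^2_{\loc}$, and it is unclear how to realize $P_r+\lambda$ as a holomorphic Fredholm family between fixed Hilbert spaces uniformly in $(\omega,\lambda)\in\mathbb{C}^2$. The paper avoids this by writing $R_x=S_x/W$, differentiating $P_xR_x=1$ in $\omega$, and showing inductively that each Laurent coefficient of $\partial_\omega^l R_x$ has range in a finite-dimensional space of solutions. The decomposition~\eqref{e:r-r-zero} is likewise obtained by computing $\partial_\omega W(0,0,0)$ and $\partial_\lambda W(0,0,0)$ directly from the ODEs for $\partial_\omega u_\pm$ and $\partial_\lambda u_\pm$, rather than via a Grushin problem; your Grushin approach is plausible but the ``differentiating the outgoing factor'' heuristic does not by itself pin down the coefficient.

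\textbf{Part 5: the real gap.} Your mechanism cannot work. Since $\Imag\omega>0$ and $k\in\mathbb{Z}$ is real, $\Imag\omega_\pm=(1+\alpha)(r_\pm^2+a^2)\Imag\omega>0$, so the outgoing solution lies in $H^2(\mathbb{R})$ and decays exponentially at both ends; the ``boundary contributions at $\pm\infty$ of size $|\omega|\Imag\omega\,|u(\pm\infty)|^2$'' are simply zero. The estimate has to come from the bulk, and this is where the hypotheses~\eqref{e:radial-cond-2} are actually used. The paper splits on $\arg\omega$: if $|\arg\omega-\pi/2|<\varepsilon$ then, using $\Real\lambda\ge -|\omega|^2/C_{1r}$ and $|ak|\le|\omega|/C_{1r}$, one has $\Real V_x\ge|\omega|^2/C>0$ everywhere, and $\Real(\bar u\,P_xu)$ gives the bound; otherwise $\Imag V_x$ has a constant sign with $|\Imag V_x|\ge|\omega|\Imag\omega/C$ (this is where $|\Imag\lambda|\le|\omega|\Imag\omega/C_{1r}$ enters), and $\Imag(\bar u\,P_xu)$ gives~\eqref{e:radial-est-2}. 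Your sketch assigns the wrong role to each hypothesis in~\eqref{e:radial-cond-2} and relies on terms that vanish.
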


Given these two propositions, we can now prove Theorems~\ref{t:r-g-omega-k}--\ref{t:upper-half-plane}:

\begin{proof}[Proof of Theorem~\ref{t:r-g-omega-k}]
Take $k\in \mathbb Z$ and an arbitrary $\delta_r>0$; put
$\mathcal H_1=L^2(K_r)$, $\mathcal H_2=L^2(\mathbb S^2)\cap \mathcal
D'_k$, $R_1(\omega,\lambda)=R_r(\omega,\lambda,k)$, and
$R_2(\omega,\lambda)=R_\theta(\omega,\lambda)|_{\mathcal D'_k}$;
finally, let the angle $\psi$ of admissible contours at infinity be
chosen as in Proposition~\ref{l:radial}.  We now apply
Proposition~\ref{l:sep-mer}. Condition (A) follows from the first parts of
Propositions~\ref{l:angular} and~\ref{l:radial}. Condition (B) follows
from~\eqref{e:angular-est-3} and part~2 of
Proposition~\ref{l:radial}. Finally, condition (C) holds because every
$\omega\in \mathbb R$ with $|\omega|>C_\omega|ak|$, where
$C_\omega$ is the constant from part~3 of Proposition~\ref{l:radial}, is regular.
Indeed, $P_\theta(\omega)$ is self-adjoint and thus has only real eigenvalues. Now,
by Proposition~\ref{l:sep-mer} we can use~\eqref{e:r-omega-int} to define
$R_g(\omega,k)$ as a meromorphic family of operators on $L^2(M_K)\cap \mathcal D'_k$ with
poles of finite rank. This can be done for any $\delta_r>0$;
therefore, $R_g(\omega,k)$ is defined as an operator
$L^2_{\comp}(M)\cap \mathcal D'_k\to
L^2_{\loc}(M)\cap \mathcal D'_k$.

Let us now prove that $P_g(\omega,k)R_g(\omega,k)f=f$ in the sense of
distributions for each $f\in L^2_{\comp}$. We will use the method
of Proposition~\ref{l:finite-sep-mer}. Assume that $\omega$ is a
regular point, so that $R_g(\omega,k)$ is well-defined. By
analyticity, we can further assume that $\omega$ is real, so that
$L^2(\mathbb S^2)\cap \mathcal D'_k$ has an orthonormal basis of
eigenfunctions of $P_\theta(\omega)$. Then it suffices to prove that
$$
I=(R_g(\omega,k)(f_r(r) f_\theta(\theta,\varphi)),P_g(\omega)(h_r(r)h_\theta(\theta,\varphi)))
=(f_r,h_r)\cdot (f_\theta,h_\theta),
$$
where $f_r,h_r\in C_0^\infty(r_-,r_+)$, $h_\theta\in C^\infty(\mathbb
S^2)\cap \mathcal D'_k$, and $f_\theta\in \mathcal D'_k$ satisfies
$$
P_\theta(\omega)f_\theta=\lambda_0 f_\theta,\
\lambda_0\in \mathbb R.
$$
Take an admissible contour $\gamma$; then
$$
\begin{gathered}
I
={1\over 2\pi i}\int_\gamma (R_r(\omega,\lambda,k)f_r,P_r(\omega,k)h_r)\cdot(R_\theta(\omega,\lambda)f_\theta,h_\theta)\\
+(R_r(\omega,\lambda,k)f_r,h_r)\cdot (R_\theta(\omega,\lambda)f_\theta,P_\theta(\omega)h_\theta)\,d\lambda.
\end{gathered}
$$
However,
$$
R_\theta(\omega,\lambda)f_\theta={f_\theta\over \lambda_0-\lambda}.
$$
It then follows from condition (B) that we can replace $\gamma$ by a
closed bounded contour $\gamma'$ which contains $\lambda_0$, but no
poles of $R_r$. (To obtain $\gamma'$, we can cut off the infinite ends
of $\gamma$ sufficiently far and connect the resulting two endpoints
by the arc $-\psi\leq\arg\lambda\leq\psi$; the integral over the arc
can be made arbitrarily small.)  Then
$$
\begin{gathered} 
I={1\over 2\pi i}\int_{\gamma'} ((1-\lambda R_r(\omega,\lambda,k))f_r,h_r)\cdot (R_\theta(\omega,\lambda)f_\theta,h_\theta)\\
+(R_r(\omega,\lambda,k)f_r,h_r)\cdot ((1+\lambda R_\theta(\omega,\lambda))f_\theta,h_\theta)\,d\lambda\\
={1\over 2\pi i}\int_{\gamma'} (f_r,h_r)\cdot (R_\theta(\omega,\lambda)f_\theta,h_\theta)
+(R_r(\omega,\lambda,k)f_r,h_r)\cdot (f_\theta,h_\theta)\,d\lambda\\
={1\over 2\pi i}\int_{\gamma'} {(f_r,h_r)(f_\theta,h_\theta)\over \lambda_0-\lambda}\,d\lambda
=(f_r,h_r)(f_\theta,h_\theta),
\end{gathered}
$$
which finishes the proof.

Finally, the operator $P_g(\omega,k)$ is the restriction to $\mathcal
D'_k$ of the elliptic differential operator on $M$ obtained from $P_g(\omega)$ by replacing $D_\varphi$ by $k$
in the second term of~\eqref{e:p-g-omega}. Therefore, by elliptic
regularity (see for example~\cite[Section~7.4]{t})
the operator $R_g(\omega,k)$ acts into $H^2_{\loc}$.
\end{proof}

Next, Theorem~\ref{t:r-g-omega} follows from Theorem~\ref{t:r-g-omega-k},
the fact that the operator $P_g(\omega)$ is elliptic on $M_K$ for small $a$
(to get $H^2$ regularity instead of $L^2$),
and the following estimate on $R_g(\omega,k)$ for large values of $k$:

\begin{prop}\label{l:large-k-estimate}
Fix $\delta_r>0$. Then there exists $a_0>0$ and a constant $C_k$
such that for $|a|<a_0$ and $|k|\geq C_k(1+|\omega|)$, $\omega$ is not
a pole of $R_g(\cdot,k)$ and we have
\begin{equation}\label{e:large-k-estimate}
\|1_{M_K}R_g(\omega,k)1_{M_K}\|_{L^2\cap \mathcal D'_k\to L^2}
\leq {C_k\over |k|^2}.
\end{equation}
\end{prop}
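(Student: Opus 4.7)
The plan is to apply Proposition~\ref{l:sep-mer} with a carefully chosen admissible contour and then estimate the resulting integral factorwise using Propositions~\ref{l:angular} and~\ref{l:radial}.  With $\psi$ as in Proposition~\ref{l:radial} part~2, I would take $\gamma$ to consist of the two rays $\arg\lambda=\pm\psi$ truncated at modulus $R_0:=k^2/4$ and closed by the arc $\{|\lambda|=R_0,\ |\arg\lambda|\leq\psi\}$ traversed through the positive real axis.  Choosing $a_0$ small and $C_k$ large in terms of $C_r,C_\theta,C_\psi$, the hypotheses $|a|<a_0$ and $|k|\geq C_k(1+|\omega|)$ simultaneously force $R_0\geq C_r\max(1,|ak|^2,|\omega|^2)$, $|k|\geq C_\theta|a\omega|$, and $R_0\geq C_\psi|a\omega|^2$.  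Consequently the radial bound~\eqref{e:radial-est} is valid on and in a neighborhood of $\gamma$, while the angular bounds~\eqref{e:angular-est-1} and~\eqref{e:angular-est-3} apply on the arc and on the rays respectively.

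Next I would verify that $\gamma$ is admissible at $\omega$.  Estimate~\eqref{e:angular-est-1} shows that all poles of $R_\theta(\omega,\cdot)|_{\mathcal D'_k}$ lie in $\{|\lambda|>k^2/2\}$, well outside $R_0$; perturbing from the self-adjoint $a=0$ case where the eigenvalues are exactly the numbers $l(l+1)$ and using~\eqref{e:angular-est-2}--\eqref{e:angular-est-3} to exclude the sector $|\arg\lambda|\geq\psi$ confines these poles to the region $\Gamma_2$ that contains the large positive reals.  The poles of $R_r(\omega,\cdot,k)$ are, by part~2 of Proposition~\ref{l:radial}, absent from the set where~\eqref{e:radial-cond} holds, and our choice of $R_0$ arranges $\overline{\Gamma_2}$ to sit inside that set; hence they lie in $\Gamma_1$.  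Thus $\omega$ is a regular point in the sense of Section~2, $R_g(\cdot,k)$ is holomorphic at $\omega$, and Proposition~\ref{l:sep-mer} yields
\begin{equation*}
1_{M_K}R_g(\omega,k)1_{M_K}={1\over 2\pi i}\int_\gamma \bigl(1_{K_r}R_r(\omega,\lambda,k)1_{K_r}\bigr)\otimes R_\theta(\omega,\lambda)|_{\mathcal D'_k}\,d\lambda.
\end{equation*}

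The final step is a direct norm estimate using $\|A\otimes B\|=\|A\|\|B\|$.  On the arc $|\lambda|=R_0$, estimate~\eqref{e:radial-est} gives $\|1_{K_r}R_r 1_{K_r}\|\leq C/R_0=O(1/k^2)$ and~\eqref{e:angular-est-1} gives $\|R_\theta|_{\mathcal D'_k}\|\leq C_\theta/k^2$; since the arc has length $2\psi R_0=O(k^2)$, its contribution is $O(1/k^2)$.  On the two rays, both factors are $O(1/|\lambda|)$, so the integrand is $O(1/|\lambda|^2)$ and integrates to $O(1/R_0)=O(1/k^2)$.  Summing gives~\eqref{e:large-k-estimate}.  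The main obstacle I anticipate is the admissibility check: locating the angular poles on the correct side of $\gamma$ requires controlling not just their modulus but also their argument, which relies on a perturbation from the self-adjoint $a=0$ situation together with~\eqref{e:angular-est-2}; once admissibility is in hand, the norm bound is quantitatively tuned to the scale $R_0\sim k^2$ of the contour.
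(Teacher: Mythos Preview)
Your proof is correct and follows essentially the same route as the paper: the paper also takes the contour consisting of the arc of radius $\lambda_0=k^2/3$ (versus your $k^2/4$) in the sector $|\arg\lambda|\leq\psi$ together with the two rays $\arg\lambda=\pm\psi$, verifies admissibility via the angular and radial estimates, and then integrates the $O(|\lambda|^{-2})$ bound on the tensor product to get $O(k^{-2})$.

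One simplification worth noting: the obstacle you anticipate---locating the angular poles by argument, using a perturbation from the self-adjoint case and~\eqref{e:angular-est-2}---is not actually needed. Estimate~\eqref{e:angular-est-1} already forces any pole of $R_\theta(\omega,\cdot)|_{\mathcal D'_k}$ to satisfy $|\lambda|>k^2/2$, and since $k^2/2>C_\psi|a\omega|^2$ under your hypotheses, estimate~\eqref{e:angular-est-3} then directly excludes $|\arg\lambda|\geq\psi$ for all such $\lambda$. So~\eqref{e:angular-est-1} and~\eqref{e:angular-est-3} alone place the angular poles in $\{|\lambda|>R_0,\ |\arg\lambda|<\psi\}\subset\Gamma_2$, with no appeal to perturbation or to~\eqref{e:angular-est-2}. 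This is exactly how the paper argues.
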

\begin{proof}
Let $\psi,C_r$ be the constants from part 2 of Proposition~\ref{l:radial} and $C_\theta,C_\psi$
be the constants from Proposition~\ref{l:angular}.
Put $\lambda_0=k^2/3$; if $C_k$ is large enough, then
$$
|k|> 1+C_\theta |a\omega|,\
\lambda_0>C_\psi |a\omega|^2+C_r(1+|\omega|^2).
$$
Take the contour $\gamma$ consisting
of the rays $\{\arg\lambda=\pm\psi,|\lambda|\geq \lambda_0\}$ and the
arc $\{|\lambda|=\lambda_0,\ |\arg\lambda|\leq\psi\}$. By~\eqref{e:angular-est-1}
and~\eqref{e:angular-est-3}, all poles of $R_\theta$ lie inside
$\gamma$ (namely, in the region $\{|\lambda|\geq\lambda_0,\ |\arg\lambda|\leq\psi\}$),
and
\begin{equation}\label{e:theorem-2-est-theta}
\|R_\theta(\omega,\lambda)\|_{L^2(\mathbb S^2)\cap \mathcal D'_k\to L^2(\mathbb S^2)}
\leq {C\over |\lambda|}
\end{equation}
for each $\lambda$ on $\gamma$. Now, suppose that
$|a|<a_0=(3C_r)^{-1/2}$;
then \eqref{e:radial-cond} is satisfied inside $\gamma$
and \eqref{e:large-k-estimate} follows from~\eqref{e:r-omega-int},
\eqref{e:radial-est}, and~\eqref{e:theorem-2-est-theta}.
\end{proof}

\begin{proof}[Proof of Theorem~\ref{t:global-outgoing}] 1.
Fix $\delta_r>0$ such that $\supp f\subset M_K$.
Take an admissible contour~$\gamma$; then by~\eqref{e:r-omega-int}
and the fact that the considered functions are in $\mathcal D'_k$,
\begin{equation}\label{e:theorem-3.5-int}
v_\pm={1\over 2\pi i}\int_\gamma 
(R_r^\pm(\omega,\lambda,k) \otimes R_\theta(\omega,\lambda))f\,d\lambda,
\end{equation}
where
$$
R_r^\pm(\omega,\lambda,k)=|r-r_\pm|^{iA_\pm^{-1}(1+\alpha)((r_\pm^2+a^2)\omega-ak)}R_r(\omega,\lambda,k).
$$
By part~2 of Proposition~\ref{l:radial}, we may choose
compact sets $K_\pm$ containing $r_\pm$ such that for
each $N$, there exists a constant $C_N$ (depending on $\omega$, $k$, and~$\gamma$) such that
$$
\|1_{K_\pm}R_r^\pm(\omega,\lambda,k)1_{K_r}\|_{L^2\to C^N(K_\pm)}\leq {C_N\over 1+|\lambda|},\ \lambda\in\gamma. 
$$
(The estimate is true over a compact portion of $\gamma$ since the image of $R_r^\pm$
consists of functions smooth at $r=r_\pm$, by the construction in Section~4.)
Now, by~\eqref{e:angular-est-3} we get for some constant $C'_N$,
$$
\|R_r^\pm(\omega,\lambda,k) \otimes R_\theta(\omega,\lambda))f\|_{C^N(K_\pm;L^2(\mathbb S^2))}
\leq {C'_N\|f\|_{L^2}\over 1+|\lambda|^2};
$$
by~\eqref{e:theorem-3.5-int}, $v_\pm\in C^\infty(K_\pm;L^2(\mathbb S^2))$.

Now, since $(P_r+P_\theta)u=f$ and  (assuming that $K_\pm\cap K_r=\emptyset$) $f|_{K_\pm\times \mathbb S^2}=0$,
we have $(P_r^\pm(\omega,k)+P_\theta(\omega))v_\pm=0$ on $K_\pm\times \mathbb S^2$, where
$$
P_r^\pm(\omega,k)=|r-r_\pm|^{iA_\pm^{-1}(1+\alpha)((r_\pm^2+a^2)\omega-ak)}
P_r(\omega,k)|r-r_\pm|^{-iA_\pm^{-1}(1+\alpha)((r_\pm^2+a^2)\omega-ak)}
$$
has smooth coefficients on $K_\pm$ (see Section~4). Then for each $N$,
$$
P_\theta^Nv_\pm=(-P_r^\pm)^Nv_\pm\in C^\infty(K_\pm;L^2(\mathbb S^2));
$$
since $P_\theta$ is elliptic, we get $v_\pm\in C^\infty(K_\pm;H^{2N}(\mathbb S^2))$.
Therefore, $v_\pm\in C^\infty(K_\pm\times \mathbb S^2)$.

\smallskip

2. Let $\omega$ be a pole of $R_g(\omega,k)$.  Then $\omega$ is not a
regular point; therefore, there exists $\lambda\in \mathbb C$ such
that $(\omega,\lambda)$ is a pole of both $R_r$ and $R_\theta$. This
gives us functions $u_r(r)$ and $u_\theta(\theta,\varphi)\in \mathcal
D'_k$ such that $(P_r(\omega,k)+\lambda)u_r=0$ and
$(P_\theta(\omega)-\lambda)u_\theta=0$. It remains to take $u=u_r
\otimes u_\theta$ and use part 4 of Proposition~\ref{l:radial}.
\end{proof}

The following fact will be used in the proof of Theorem~\ref{t:upper-half-plane},
as well as in Section~7:
\begin{prop}\label{l:smart-contour}
Fix $\delta_r>0$. Let $\psi,C_r$ be the constants
from part~2 of Proposition~\ref{l:radial},
$C_\theta,C_\psi$ be the constants from Proposition~\ref{l:angular},
and $C_k$ be the constant from Proposition~\ref{l:large-k-estimate}.
Take $\omega\in \mathbb C$ and put
$$
L=(C_r(1+C_k)^2+C_\psi)(1+|\omega|)^2.
$$
Assume that $a$ is small enough so that Proposition~\ref{l:large-k-estimate} applies and
suppose that $\omega$ and $l_1,l_2>0$ are chosen so that
\begin{equation}\label{e:smart-contour-cond}
l_1\geq C_\psi |a\omega|^2,\
l_2\geq C_\theta|a|(|a\omega|+C_k(1+|\omega|))|\Imag\omega|,\
l_2\leq L\sin\psi.
\end{equation}
Also, assume that for all $\lambda$ and $k$ satisfying
\begin{equation}\label{e:smart-contour-2-cond}
|k|\leq C_k(1+|\omega|),\
-l_1\leq\Real\lambda\leq L,\
|\Imag\lambda|\leq l_2,
\end{equation}
we have the estimate
\begin{equation}\label{e:smart-contour-2-est}
\|1_{K_r}R_r(\omega,\lambda,k)1_{K_r}\|_{L^2\to L^2}\leq C_1
\end{equation}
for some constant $C_1$ independent of $\lambda$ and $k$.
Then $\omega$ is not a resonance and
\begin{equation}\label{e:smart-contour-result}
\|R_g(\omega)\|_{L^2(M_K)\to L^2(M_K)}\leq C_2\bigg({1\over 1+|\omega|^2}
+{1+C_1(l_1+1+|\omega|^2)\over l_2}+{C_1 l_2\over l_1}\bigg)
\end{equation}
for a certain global constant $C_2$.
\end{prop}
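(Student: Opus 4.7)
The plan is to combine the angular-momentum decomposition of $R_g(\omega)$ with the contour-integral construction \eqref{e:r-omega-int} from Proposition~\ref{l:sep-mer}, using a contour tailored to the hypotheses. Since $L^2(M_K)=\bigoplus_k (L^2(M_K)\cap \mathcal D'_k)$ and each $R_g(\omega,k)$ preserves $\mathcal D'_k$, we have $\|R_g(\omega)\|_{L^2(M_K)\to L^2(M_K)}=\sup_k \|1_{M_K}R_g(\omega,k)1_{M_K}\|$. For $|k|>C_k(1+|\omega|)$, Proposition~\ref{l:large-k-estimate} applies directly, giving a bound $C_k/|k|^2\lesssim 1/(1+|\omega|)^2$ and showing that $\omega$ is not a pole of $R_g(\cdot,k)$; this produces the first term in \eqref{e:smart-contour-result}. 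It remains to handle $|k|\leq C_k(1+|\omega|)$.

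For such $k$ I propose a stair-step admissible contour consisting of: the outer rays $\arg\lambda=\pm\psi$, $|\lambda|\geq L$; short horizontal segments along $\Imag\lambda=\pm L\sin\psi$ from $Le^{\pm i\psi}$ to $(L,\pm L\sin\psi)$; vertical segments along $\Real\lambda=L$ from $(L,\pm L\sin\psi)$ down to the box corners $(L,\pm l_2)$; horizontal segments along $\Imag\lambda=\pm l_2$ running from $(L,\pm l_2)$ to $(-l_1,\pm l_2)$; and a closing vertical segment at $\Real\lambda=-l_1$, $|\Imag\lambda|\leq l_2$. The third hypothesis $l_2\leq L\sin\psi$ is what makes the geometry consistent: it ensures that the pieces at $\Real\lambda=L$ and $\Imag\lambda=\pm L\sin\psi$ lie in the cone $\{|\arg\lambda|\leq \psi,\ |\lambda|\geq L\}$, where Proposition~\ref{l:radial} gives $\|R_r\|\leq C_r/|\lambda|$. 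Admissibility is verified by combining \eqref{e:smart-contour-2-est} (in the box) with Proposition~\ref{l:radial} part~2 (in the cone) to exclude poles of $R_r$ from $\gamma$; the first two conditions of \eqref{e:smart-contour-cond} allow us to apply \eqref{e:angular-est-3} on the left vertical piece and \eqref{e:angular-est-2} on the other non-ray pieces to exclude poles of $R_\theta$.

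The norm bound follows by estimating $\|R_r\|\cdot\|R_\theta\|$ on each piece and integrating; finiteness of the integral shows that $\omega$ is a regular point in the sense of Section~2, hence not a resonance. On the outer rays both factors decay like $1/|\lambda|$, giving $O(1/L)=O(1/(1+|\omega|^2))$. On the horizontal box pieces, $\|R_r\|\leq C_1$ and $\|R_\theta\|\leq 2/l_2$ over length $\lesssim l_1+L\lesssim l_1+1+|\omega|^2$, giving $O(C_1(l_1+1+|\omega|^2)/l_2)$. On the left vertical piece, $\|R_r\|\leq C_1$ and $\|R_\theta\|\leq C_\psi/l_1$ (from \eqref{e:angular-est-3}, since $|\arg\lambda|>\pi/2\geq \psi$ and $|\lambda|\geq l_1\geq C_\psi|a\omega|^2$) over length $2l_2$, giving $O(C_1l_2/l_1)$. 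For the vertical cone-connector pieces at $\Real\lambda=L$, the key is to use $|\lambda|\geq |\Imag\lambda|$, yielding $\|R_r\|\cdot\|R_\theta\|\leq 2C_r/|\Imag\lambda|^2$, which integrates to $O(1/l_2)$ and is absorbed by the second term; the two short horizontal cone pieces contribute $O(1/L)$. Summing reproduces \eqref{e:smart-contour-result}.

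The main obstacle is the geometric construction of a contour that stays inside the region where either the hypothesis \eqref{e:smart-contour-2-est} applies (the box) or Proposition~\ref{l:radial} applies (the cone $|\arg\lambda|\leq \psi$, $|\lambda|\geq L$), together with the verification of admissibility. The three conditions in \eqref{e:smart-contour-cond} play distinct roles: the first pushes the left segment beyond where $R_\theta$ may have poles via \eqref{e:angular-est-3}; the second keeps $|\Imag\lambda|$ on the horizontal pieces above the region covered by \eqref{e:angular-est-2}; and the third is the gluing condition that lets the box join the cone cleanly. The sharper bound on the connector pieces at $\Real\lambda=L$ is needed to avoid a spurious logarithmic factor that would arise from the naive estimate $\|R_r\|\leq C_r/L$.
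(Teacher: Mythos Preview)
Your proof is correct and follows the same strategy as the paper: reduce to bounded $|k|$ via Proposition~\ref{l:large-k-estimate}, then estimate the contour integral~\eqref{e:r-omega-int} piece by piece along an admissible contour tailored to the hypotheses. The only difference is cosmetic: the paper connects the rays $\arg\lambda=\pm\psi$ to the horizontal segments $\Imag\lambda=\pm l_2$ by \emph{circular arcs} on $|\lambda|=L$ (with $|\Imag\lambda|\geq l_2$), whereas you use a rectilinear stair-step through $(L,\pm L\sin\psi)$. Both connectors lie in the cone $\{|\lambda|\geq L,\ |\arg\lambda|\leq\psi\}$, both give an $O(1/l_2)$ contribution, and the remaining pieces and estimates coincide; in particular the paper gets $O(1/l_2)$ on its arcs simply from $\|R_r\|\leq C/L$, $\|R_\theta\|\leq C/l_2$, and arc length $O(L)$, so the refinement $|\lambda|\geq|\Imag\lambda|$ you invoke on the vertical connectors is not strictly needed, though it does the job.
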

\begin{proof} First of all, by Proposition~\ref{l:large-k-estimate},
it suffices to establish the estimate~\eqref{e:smart-contour-result}
for the operator $R_g(\omega,k)$, where $|k|\leq C_k(1+|\omega|)$.
Now, by~\eqref{e:r-omega-int}, it suffices to construct
an admissible contour in the sense of Definition~\ref{d:admissible}
and estimate the norms of $R_r$ and $R_\theta$ on this contour. 
We take the contour $\gamma$ composed of:
\begin{itemize}
\item the rays $\gamma_{1\pm}=\{\arg\lambda=\pm\psi,|\lambda|\geq L\}$;
\item the arcs $\gamma_{2\pm}=\{|\arg\lambda|\leq\psi,\ |\lambda|=L,\ \pm\Imag\lambda\geq l_2\}$;
\item the segments $\gamma_{3\pm}$ of the lines $\{\Imag\lambda=\pm l_2\}$ connecting
$\gamma_{2\pm}$ with $\gamma_4$;
\item the segment $\gamma_4=\{\Real\lambda=-l_1,\ |\Imag\lambda|\leq l_2\}$.
\end{itemize}
Then $\gamma$ divides the complex plane into two domains;
we refer to the domain containing positive real numbers as $\Gamma_2$
and to the other domain as $\Gamma_1$. We claim that $R_\theta(\omega,\cdot)|_{\mathcal D'_k}$ has no poles
in $\Gamma_1$, $R_r(\omega,\cdot,k)$ has no poles in $\Gamma_2$, and the $L^2\to L^2$ operator norm estimates
\begin{align}
\label{e:smart-contour-est-1}
\|R_\theta(\omega,\lambda)\|\leq C/|\lambda|,\
\|1_{K_r}R_r(\omega,\lambda,k)1_{K_r}\|\leq C/|\lambda|,\
\lambda&\in\gamma_{1\pm};\\
\label{e:smart-contour-est-2}
\|R_\theta(\omega,\lambda)|_{\mathcal D'_k}\|\leq C/l_2,\
\|1_{K_r}R_r(\omega,\lambda,k)1_{K_r}\|\leq C/(1+|\omega|^2),\
\lambda&\in\gamma_{2\pm};\\
\label{e:smart-contour-est-3}
\|R_\theta(\omega,\lambda)|_{\mathcal D'_k}\|\leq C/l_2,\
\|1_{K_r}R_r(\omega,\lambda,k)1_{K_r}\|\leq C_1,\
\lambda&\in\gamma_{3\pm};\\
\label{e:smart-contour-est-4}
\|R_\theta(\omega,\lambda)\|\leq C/l_1,\
\|1_{K_r}R_r(\omega,\lambda,k)1_{K_r}\|\leq C_1,\
\lambda&\in\gamma_4
\end{align}
hold for some global constant $C$; then~\eqref{e:smart-contour-result} follows
from these estimates and~\eqref{e:r-omega-int}.

\begin{figure}
\includegraphics{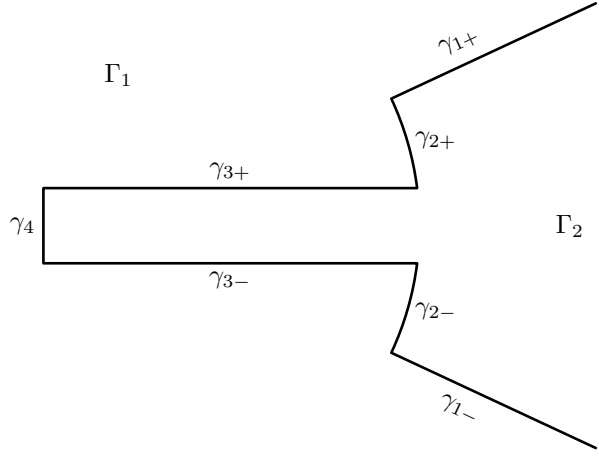}
\caption{The admissible contour $\gamma$ used in Proposition~\ref{l:smart-contour}}
\end{figure}

First, we prove that $R_\theta(\omega,\cdot)|_{\mathcal D'_k}$ has no poles $\lambda\in\Gamma_1$. First of all,
assume that $|\lambda|\geq L$. Then $|\arg\lambda|\geq\psi$ and we can apply
part~3 of Proposition~\ref{l:angular}; we also get the first half of~\eqref{e:smart-contour-est-1}.
Same argument works for $\Real\lambda\leq -l_1$, and we get the first half of~\eqref{e:smart-contour-est-4}.
We may now assume that $|\lambda|\leq L$ and $\Real\lambda\geq -l_1$; it follows
that $|\Imag\lambda|\geq l_2$. But in that case, we can apply~\eqref{e:angular-est-2},
and we get the first halves of~\eqref{e:smart-contour-est-2} and~\eqref{e:smart-contour-est-3}. 

Next, we prove that $R_r(\omega,\cdot,k)$
has no poles $\lambda\in\Gamma_2$. First of all,
assume that $|\lambda|\geq L$ and $\Real\lambda\geq 0$. Then
$|\arg\lambda|\leq\psi$ and we can apply
part~2 of Proposition~\ref{l:radial}; we also get the second halves of~\eqref{e:smart-contour-est-1}
and~\eqref{e:smart-contour-est-2}. Now, in the opposite case, \eqref{e:smart-contour-2-cond}
is satisfied and we can use~\eqref{e:smart-contour-2-est} to get the second
halves of~\eqref{e:smart-contour-est-3} and~\eqref{e:smart-contour-est-4}.
\end{proof}

\begin{proof}[Proof of Theorem~\ref{t:upper-half-plane}]
First, we take care of the resonances near zero.
By Proposition~\ref{l:large-k-estimate}, we can assume that $k$ is bounded
by some constant. Next, if $\omega=0$ and $a=0$, then $R_g(\omega,k)$ only
has a pole for $k=0$, and in the latter case, $\lambda=0$ is the only
common pole of $R_\theta(0,\cdot)$ and $R_r(0,\cdot,0)$. (In fact, the poles
of $R_\theta(0,\cdot)|_{\mathcal D'_k}$ are given by $\lambda=l(l+1)$ for $l\geq |k|$; an integration
by parts argument shows that $R_r(0,\cdot,k)$ cannot have poles with $\Real\lambda>0$.)
The sets of poles of the resolvents $R_\theta(\omega,\lambda)|_{\mathcal D'_k}$
and $R_r(\omega,\lambda,k)$ depend continuously on $a$ in the sense that, if 
there are no poles of one of these resolvents for $(\omega,\lambda)$ in a fixed
compact set for $a=0$, then this is still true for $a$ small enough. It follows 
from here and the first parts of Propositions~\ref{l:angular} and~\ref{l:radial} that
there exists $\varepsilon_\omega,\varepsilon_\lambda>0$ such that for $a$ small enough,
\begin{itemize}
\item $R_g(\omega,k)$ does not have poles in $\{|\omega|\leq\varepsilon_\omega\}$
unless $k=0$;
\item if $|\omega|\leq\varepsilon_\omega$, then all common poles of $R_\theta(\omega,\cdot)|_{\mathcal D'_0}$
and $R_r(\omega,\cdot,0)$ lie in $\{|\lambda|\leq\varepsilon_\lambda\}$;
\item the decompositions~\eqref{e:r-theta-zero} and~\eqref{e:r-r-zero}
hold for $|\omega|\leq\varepsilon_\omega$, $|\lambda|\leq\varepsilon_\lambda$;
\item we have $\lambda_r(\omega)\neq\lambda_\theta(\omega)$ for $0<|\omega|\leq\varepsilon_\omega$. 
\end{itemize}
It follows immediately that $\omega=0$ is the only pole of $R_g$ in $\{|\omega|\leq\varepsilon_\omega\}$.
To get the meromorphic decomposition, we repeat the argument at the end of Section~2 in our
particular case. Note that for small $\omega\neq 0$,
$$
R_g(\omega,0)={1\over 2\pi i}\int_\gamma R_r(\omega,\lambda,0) \otimes R_\theta(\omega,\lambda)|_{\mathcal D'_0}\,d\lambda+\Hol(\omega)
$$
Here $\gamma$ is a small contour surrounding $\lambda_\theta(\omega)$, but not $\lambda_r(\omega)$;
the integration is done in the clockwise direction; $\Hol$ denotes a family of operators
holomorphic near zero.
By~\eqref{e:r-theta-zero} and~\eqref{e:r-r-zero},
we have
$$
\begin{gathered}
R_g(\omega,0)=\Hol(\omega)+{1\over 2\pi i}\int_\gamma {S_{r0}(\omega,\lambda) \otimes S_{\theta 0}(\omega,\lambda)
\over (\lambda-\lambda_r(\omega))(\lambda-\lambda_\theta(\omega))}\,d\lambda\\
=\Hol(\omega)+{1\over \lambda_r(\omega)-\lambda_\theta(\omega)}
{1\over 2\pi i}\int_\gamma (S_{r0}(\omega,\lambda) \otimes S_{\theta 0}(\omega,\lambda))
\bigg({1\over\lambda-\lambda_r(\omega)}-{1\over\lambda-\lambda_\theta(\omega)}\bigg)\,d\lambda\\
=\Hol(\omega)+{1\over \lambda_r(\omega)-\lambda_\theta(\omega)}S_{r0}(\omega,\lambda_\theta(\omega))
\otimes S_{\theta 0}(\omega,\lambda_\theta(\omega))\\
=\Hol(\omega)+{i(1 \otimes 1)\over 4\pi(1+\alpha)(r_+^2+r_-^2+2a^2)\omega}.
\end{gathered} 
$$
 
Now, let us consider the case $|\omega|>\varepsilon_\omega,\ \Imag\omega>0$.
We will apply Proposition~\ref{l:smart-contour}
with $l_1=|\omega|^2/C_{1r},\ l_2=|\omega|\Imag\omega/C_{1r}$.
Here $C_{1r}$ is the constant in Proposition~\ref{l:radial}. Then~\eqref{e:smart-contour-cond}
is true for small $a$
and~\eqref{e:smart-contour-2-est}
follows from~\eqref{e:smart-contour-2-cond} for small $a$
by part~5 of Proposition~\ref{l:radial}, with $C_1=C_{1r}/(|\omega|\Imag\omega)$. It remains
to use~\eqref{e:smart-contour-result}.

Finally, assume that $\omega$ is a real $k$-resonance and $|\omega|>\varepsilon_\omega$. Then by
Proposition~\ref{l:large-k-estimate}, and part~3 of Proposition~\ref{l:radial},
if $a$ is small enough, then the operator $R_r(\omega,\cdot,k)$ cannot have
a pole for $\lambda\in \mathbb R$. However, the operator $P_\theta(\omega)$ is
self-adjoint and thus only has real eigenvalues, a contradiction.  
\end{proof}

%%%%%%%%%%%%%%%%%%%%%%%%%%%%%%%%%%%%%%%%%%%%%%%%%%%%%%%%%%%%%%%%%%%%%%%%%%%%%%%%
%                               RADIAL RESOLVENT                               %
%%%%%%%%%%%%%%%%%%%%%%%%%%%%%%%%%%%%%%%%%%%%%%%%%%%%%%%%%%%%%%%%%%%%%%%%%%%%%%%%
\section{Construction of the radial resolvent}

In this section, we prove Proposition~\ref{l:radial}, except for part 2, which is
proved in Section~6.
We start with a change of variables that maps 
$(r_-,r_+)$ to $(-\infty,\infty)$:
\begin{prop}\label{l:regge-wheeler}
Define $x=x(r)$ by 
\begin{equation}\label{e:regge-wheeler}
x=\int_{r_0}^r {ds\over\Delta_r(s)}.
\end{equation}
(Here $r_0\in (r_-,r_+)$ is a fixed number.) Then there exists a
constant $X_0$ such that for $\pm x>X_0$, we have $r=r_\pm\mp F_\pm
(e^{\mp A_\pm x})$, where $F_\pm(w)$ are real analytic on $[0,e^{-A_\pm X_0})$
and holomorphic in the discs $\{|w|< e^{-A_\pm X_0} \}\subset \mathbb C$.
\end{prop}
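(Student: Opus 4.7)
The plan is to isolate the logarithmic singularity of the integral~\eqref{e:regge-wheeler} at each horizon and then invert a holomorphic change of variable via the inverse function theorem.

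Since $\Delta_r$ is a polynomial in $r$ vanishing simply at $r=r_+$ with $\partial_r\Delta_r(r_+)=-A_+$, near $r=r_+$ I would factor
$$
\Delta_r(r)=A_+(r_+-r)\bigl(1+h_+(r_+-r)\bigr),
$$
where $h_+$ is a polynomial with $h_+(0)=0$. Then $1/\Delta_r=A_+^{-1}(r_+-r)^{-1}\bigl(1+\tilde h_+(r_+-r)\bigr)$ with $\tilde h_+$ analytic at $0$ and vanishing there. Integrating this expression against $ds$ from the base point $r_0$ yields, for $r$ close to $r_+$,
$$
x(r)=-A_+^{-1}\ln(r_+-r)+G_+(r_+-r),
$$
with $G_+$ analytic in a fixed neighborhood of $0$. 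The analogous computation at $r_-$, using $\Delta_r=A_-(r-r_-)(1+h_-(r-r_-))$, produces $x(r)=A_-^{-1}\ln(r-r_-)+G_-(r-r_-)$.

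Next, setting $u=r_+-r$ and $w=e^{-A_+x}$, the identity above becomes $w=u\,e^{-A_+G_+(u)}=:\psi_+(u)$. The map $\psi_+$ is holomorphic in a disc about $u=0$, satisfies $\psi_+(0)=0$, and $\psi_+'(0)=e^{-A_+G_+(0)}\neq 0$; by the holomorphic inverse function theorem it admits a holomorphic inverse $F_+$ defined on a disc $\{|w|<\rho_+\}\subset\mathbb C$. Because the Taylor coefficients of $\psi_+$ at $0$ are real, those of $F_+$ are real too, so $F_+$ is real analytic on $[0,\rho_+)$. An identical argument at $r_-$, inverting $w=u\,e^{A_-G_-(u)}$, yields a holomorphic inverse $F_-$ on $\{|w|<\rho_-\}$ with $r-r_-=F_-(e^{A_-x})$ for $-x$ large.

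Finally, I would choose $X_0$ large enough that $e^{-A_\pm X_0}\leq\rho_\pm$, which guarantees that the exponentials $e^{\mp A_\pm x}$ for $\pm x>X_0$ fall inside the corresponding discs of holomorphy. The only substantive point is ensuring that the analytic expansions of $h_\pm$, and hence the inverses $F_\pm$, reach a uniform disc; this is automatic since $\Delta_r$ is a global polynomial whose other zeros sit at a fixed distance from $r_\pm$, controlling the radii $\rho_\pm$ from below. No part of the argument is deep; the main care required is sign bookkeeping in the two cases and in the definition of $A_\pm$.
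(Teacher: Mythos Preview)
Your proof is correct and follows essentially the same route as the paper's: isolate the logarithmic singularity by writing $-A_+x=\ln(r_+-r)+G$ with $G$ holomorphic near $r_+$, exponentiate, and apply the holomorphic inverse function theorem. You supply more detail (the explicit factorization of $\Delta_r$, the reality of the Taylor coefficients, and the remark that the other zeros of $\Delta_r$ control the radii $\rho_\pm$), but the core argument is identical.
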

\begin{proof} We concentrate on the behavior of $x$ near $r_+$.
It is easy to see that $-A_+x(r)=\ln(r_+-r)+G(r)$,
where $G$ is holomorphic near $r=r_+$. Exponentiating, we get
$$
w=e^{-A_+x}=(r_+-r)e^{G(r)}.
$$
It remains to apply the inverse function theorem to solve for $r$ as a function of $w$ near zero.
\end{proof}

After the change of variables $r\to x$, 
we get $P_r(\omega,k)+\lambda=\Delta_r^{-1}P_x(\omega,\lambda,k)$, where
\begin{equation}\label{e:p-x}
\begin{gathered}
P_x(\omega,\lambda,k)=D_x^2+V_x(x;\omega,\lambda,k),
\\
V_x=\lambda\Delta_r-(1+\alpha)^2((r^2+a^2)\omega-ak)^2.
\end{gathered}
\end{equation}
(We treat $r$ and $\Delta_r$ as functions of $x$ now.) We put
\begin{equation}\label{e:omega-pm}
\omega_\pm=(1+\alpha)((r_\pm^2+a^2)\omega-ak),
\end{equation}
so that $V_x(\pm\infty)=-\omega_\pm^2$. Also, by Proposition~4.1,
we get
\begin{equation}\label{e:v-x-complex}
V_x(x)=V_\pm(e^{\mp A_\pm x}),\ \pm x>X_0,
\end{equation}
where $V_\pm(w)$ are functions holomorphic in the discs $\{|w|<e^{-A_\pm X_0}\}$.

We now define outgoing functions:
\begin{defi}\label{d:outgoing} Fix $\omega,k,\lambda$.
A function $u(x)$ (and the corresponding function of $r$) is called outgoing
at $\pm\infty$ iff
\begin{equation}\label{e:u-pm}
u(x)=e^{\pm i\omega_\pm x} v_\pm(e^{\mp A_\pm x}),
\end{equation}
where $v_\pm(w)$ are holomorphic in a neighborhood of zero.
We call $u(x)$ outgoing if it is outgoing at both infinities.
\end{defi}

Let us construct certain solutions outgoing at one of the infinities:
\begin{prop}\label{l:outgoing}
There exist solutions $u_\pm(x;\omega,\lambda,k)$ to the equation $P_x u_\pm=0$
of the form
$$
u_\pm(x;\omega,\lambda,k)=e^{\pm i\omega_\pm x} v_\pm(e^{\mp A_\pm x};\omega,\lambda,k),
$$
where $v_\pm(w;\omega,\lambda,k)$ is holomorphic in $\{|w|<W_\pm\}$ and
\begin{equation}\label{e:v-pm-equ}
v_\pm(0;\omega,\lambda,k)={1\over \Gamma(1-2i\omega_\pm A_\pm^{-1})}.
\end{equation}
These solutions are holomorphic in $(\omega,\lambda)$ and are unique
unless $\nu=2i\omega_\pm A_\pm^{-1}$ is a positive integer.
\end{prop}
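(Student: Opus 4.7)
The approach is to reduce the construction to a Fuchsian ODE on a complex disc and apply Frobenius theory; by symmetry I focus on $u_+$. Using the substitution $w = e^{-A_+ x}$ provided by Proposition~\ref{l:regge-wheeler} and the ansatz $u_+ = e^{i\omega_+ x} v_+(w)$, a direct computation (using that $\omega_+^2 + V_+(0) = 0$, so that $\omega_+^2 + V_+(w) = w\tilde V_+(w)$ with $\tilde V_+$ holomorphic on $\{|w| < e^{-A_+ X_0}\}$ and jointly holomorphic in $(\omega,\lambda)$) converts $P_x u_+ = 0$ into
\[
w v_+'' + (1-\nu)\, v_+' - A_+^{-2}\tilde V_+(w)\, v_+ \;=\; 0, \qquad \nu = 2i\omega_+ A_+^{-1}.
\]
This is a Fuchsian equation with a regular singular point at $w=0$ and indicial exponents $0$ and $\nu$.

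The plan is to seek a formal solution $v_+(w) = \sum_{n\geq 0} a_n w^n$. Substitution yields the recursion
\[
n(n-\nu)\, a_n \;=\; \sum_{j=0}^{n-1} c_{nj}(\omega,\lambda,k)\, a_j, \qquad n \geq 1,
\]
with $c_{nj}$ coming from the Taylor coefficients of $A_+^{-2}\tilde V_+$ and hence jointly holomorphic in $(\omega,\lambda)$. The normalization~\eqref{e:v-pm-equ} is designed precisely so that $\tilde a_n := \Gamma(n+1-\nu)\, a_n$ satisfies a recursion whose coefficients are only polynomial in $\nu$: the vanishing factor $n-\nu$ on the left-hand side at $\nu=n$ is absorbed by the pole of $\Gamma(n+1-\nu)$ relative to $\Gamma(1-\nu)$. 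By induction each $\tilde a_n$ is entire in $\nu$ and holomorphic in $(\omega,\lambda)$, so $a_n = \tilde a_n/\Gamma(n+1-\nu)$ is holomorphic in $(\omega,\lambda) \in \mathbb{C}^2$ because $1/\Gamma$ is entire. A standard majorant argument, using a uniform bound on $\tilde V_+$ on a fixed disc $|w| \leq r < e^{-A_+ X_0}$, shows the power series converges on a disc $|w|<W_+$ with $W_+$ uniform on compact subsets of the parameters; locally uniform convergence then preserves holomorphy in $(\omega,\lambda)$.

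For uniqueness when $\nu$ is not a positive integer, the difference of two candidate $u_+$'s has the same form with $v_+(0)=0$; the first non-vanishing coefficient $a_{n_0}$, $n_0\geq 1$, would have to satisfy $n_0(n_0-\nu)a_{n_0} = 0$ with $n_0-\nu \neq 0$, forcing $a_{n_0}=0$, and the recursion then propagates this to all $n$, giving uniqueness. When $\nu \in \mathbb{Z}_{>0}$, setting $n_0=\nu$ makes $a_{n_0}$ a free parameter in the recursion, producing the expected one-parameter family of solutions. The main technical obstacle is the bookkeeping needed to verify that the $\Gamma$-normalization genuinely yields holomorphy across the resonant locus $\nu \in \mathbb{Z}_{>0}$; everything else is routine Frobenius theory combined with a majorant estimate.
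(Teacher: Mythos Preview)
Your proof is correct and follows essentially the same route as the paper: reduce to a Fuchsian equation in $w=e^{-A_+x}$, derive the Taylor recursion $n(n-\nu)a_n=\sum_{j<n}c_{nj}a_j$, and prove convergence by a majorant bound. The only notable difference is in handling the resonant values $\nu\in\mathbb{Z}_{>0}$: the paper argues via Cramer's rule that the zeros of the determinant of the first $\nu$ recursion equations match the zeros of $1/\Gamma(1-\nu)$, whereas you make this cancellation explicit through the substitution $\tilde a_n=\Gamma(n+1-\nu)a_n$, which turns the recursion into $n\,\tilde a_n=\sum_{j<n}c_{nj}\prod_{m=j+1}^{n-1}(m-\nu)\,\tilde a_j$ with $\tilde a_0=1$, manifestly entire in $\nu$. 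Both arguments encode the same mechanism; yours is a bit more transparent, the paper's a bit more direct.
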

\begin{proof} We only construct the function $u_+$.
Let us write the Taylor series for $v_+$ at zero:
$$
v_+(w)=\sum_{j\geq 0} v_j w^j. 
$$
Put $w=e^{-A_+x}$; then the equation $P_xu_+=0$ is equivalent to
$$
((A_+ wD_w-\omega_+)^2+V_x)v_\pm=0.
$$
By~\eqref{e:p-x} and Proposition~\ref{l:regge-wheeler}, $V_x$ is a holomorphic
function of $w$ for $|w|<W_+$.
If $V_x=\sum_{j\geq 0}V_j w^j$ is the corresponding Taylor series,
then we get the following system of
linear equations on the coefficients $v_j$:
\begin{equation}\label{e:outgoing-system}
jA_+(2i\omega_+-jA_+)v_j+\sum_{0<l\leq j}V_l v_{j-l}=0,\ j>0.
\end{equation}
If $\nu$ is not a positive integer, then this system has a unique
solution under the condition $v_0=\Gamma(1-\nu)^{-1}$.  This solution
can be uniquely holomorphically continued to include the cases when
$\nu$ is a positive integer.  Indeed, one defines the coefficients
$v_0,\dots,v_\nu$ by Cramer's Rule using the first $\nu$ equations
in~\eqref{e:outgoing-system} (this can be done since the zeroes of the
determinant of the corresponding matrix match the poles of the gamma
function), and the rest are uniquely determined by the remaining
equations in the system~\eqref{e:outgoing-system}.

We now prove that the series above converges in the disc
$\{|w|<W_+\}$. We take $\varepsilon>0$; then $|V_j|\leq M
(W_+-\varepsilon)^{-j}$ for some constant $M$.  Then one can use
induction and~\eqref{e:outgoing-system} to see that $|v_j|\leq
C(W_+-\varepsilon)^{-j}$ for some constant $C$. Therefore, the Taylor
series for $v$ converges in the disc $\{|w|<W_+-\varepsilon\}$; since
$\varepsilon$ was arbitrary, we are done.
\end{proof}

The condition~\eqref{e:v-pm-equ} makes it possible for $u_\pm$ to be zero for certain
values of $\omega_\pm$. However, we have the following
\begin{prop}\label{l:exceptional}
Assume that one of the solutions $u_\pm$ is identically zero. Then every solution $u$
to the equation $P_xu=0$ is outgoing at the corresponding infinity.
\end{prop}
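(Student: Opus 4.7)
The plan is to work at $+\infty$ (the $-\infty$ case being entirely symmetric) and carry out a Frobenius-type analysis at the regular singular point $w = 0$, where $w = e^{-A_+ x}$. Writing a solution of $P_x u = 0$ as $u(x) = e^{i\omega_+ x} v(w) = w^{-i\omega_+/A_+} v(w)$ transforms the equation into the ODE $((A_+ w D_w - \omega_+)^2 + V_x(w))\, v = 0$ from the proof of Proposition~\ref{l:outgoing}, whose indicial exponents at $w = 0$ are $0$ and $\nu = 2i\omega_+/A_+$. By Definition~\ref{d:outgoing}, $u$ is outgoing at $+\infty$ if and only if $v$ is holomorphic at $w = 0$, so the task reduces to showing that every solution $v$ is holomorphic at $w = 0$.

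Since $v_+(0) = 1/\Gamma(1-\nu)$, the hypothesis $u_+ \equiv 0$ forces $\nu$ to be a positive integer $n$. In this integer-difference case, standard Frobenius theory always produces one pure-power solution $v_1(w) = w^n \phi_1(w)$ with $\phi_1$ holomorphic and $\phi_1(0) \neq 0$, together with a second solution of the form
\begin{equation*}
v_2(w) = \phi_2(w) + C\,(\log w)\, v_1(w),
\end{equation*}
where $\phi_2$ is holomorphic with $\phi_2(0) = 1$ and $C \in \mathbb C$ is the obstruction coming from the failure of the recurrence~\eqref{e:outgoing-system} at $j = n$. If $C = 0$, both $v_1$ and $v_2$ are holomorphic at $w = 0$; then every solution, being a linear combination of $v_1$ and $v_2$, is holomorphic, and we are done.

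It remains to rule out $C \neq 0$. For this I would use the family $\Phi(w;\nu)$ of solutions obtained by running the recurrence~\eqref{e:outgoing-system} with the first coefficient normalized to $1$ at generic $\nu$ near $n$: the $n$-th coefficient develops a simple pole at $\nu = n$ whose residue is proportional to $C$, and this propagates into a singular part of $\Phi$ equal to $C/(\nu - n)$ times the Frobenius solution at exponent $n$. Proposition~\ref{l:outgoing} rescales by the factor $1/\Gamma(1-\nu)$, which vanishes to first order at $\nu = n$ with nonzero residue $(-1)^{n-1}(n-1)!$; the pole and the zero cancel exactly, and in the limit $v_+(\cdot;\omega_0)$ becomes a nonzero multiple of $v_1$. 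This contradicts $u_+ \equiv 0$, forcing $C = 0$ and completing the proof. The main obstacle is the residue bookkeeping in the limit, but once one tracks how the simple pole of the Frobenius tail cancels against the simple zero of $1/\Gamma(1-\nu)$, the identification with $v_1$ is forced by the recurrence.
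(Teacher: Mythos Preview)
Your proof is correct, and the underlying mechanism is the same as the paper's, but the packaging differs in an instructive way.

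The paper does not explicitly invoke Frobenius theory or track the logarithmic obstruction $C$. Instead, having observed (as you do) that $\nu=2i\omega_+/A_+$ must be a positive integer, it constructs two linearly independent outgoing solutions directly: $u_1$ is the Frobenius solution at the larger exponent (your $v_1$), and $u_2$ is obtained as the limit $\lim_{\omega\to\omega_0}\Gamma(1-\nu)u_+$. Since $u_+$ vanishes at $\omega_0$ and $\Gamma(1-\nu)$ has a simple pole there, the limit exists; since it is a locally uniform limit of functions of the form $e^{i\omega_+ x}(\text{holomorphic in }w)$, it is automatically outgoing; and since $\Gamma(1-\nu)v_+(0;\nu)\equiv 1$, the limit has leading behavior $e^{i\omega_{0+}x}(1+o(1))$, so it is linearly independent from $u_1$. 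That already forces every solution to be outgoing, with no residue bookkeeping.

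Your route makes the same content explicit in classical ODE language: you identify the paper's $u_2$ with the exponent-$0$ Frobenius solution and show, by matching the simple pole of the normalized recurrence against the simple zero of $1/\Gamma(1-\nu)$, that the log coefficient $C$ must vanish under the hypothesis $u_+\equiv 0$. This is a genuine equivalence (indeed $u_+(\cdot;\omega_0)\equiv 0$ if and only if $C=0$), and your argument correctly verifies the only nontrivial direction needed here. The paper's limit construction is slightly more economical because it bypasses the Frobenius dichotomy altogether; your version, on the other hand, makes the ODE structure transparent and would generalize without change to any regular singular point with integer indicial gap.
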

\begin{proof} Assume that $u_+(x;\omega_0,\lambda_0,k_0)\equiv 0$. (The argument for $u_-$ is similar.)
Put $\nu=2i\omega_{0+} A_+^{-1}$; 
by~\eqref{e:v-pm-equ}, it has to be a positive integer. Similarly to Proposition~\ref{l:outgoing},
we can construct a nonzero solution $u_1$ to the equation $P_x u_1=0$ with
$$
u_1(x)=e^{- i\omega_{0+} x}\tilde v_1(e^{- A_+ x}) 
$$
and $\tilde v_1$ holomorphic at zero. We can see that
$u_1(x)=e^{i\omega_{0+} x}v_1(e^{- A_+ x})$,
where
$v_1(w)=w^\nu \tilde v_1(w)$
is holomorphic; therefore, $u_1$ is outgoing. Note that $u_1(x)=o(e^{ i\omega_{0+} x})$
as $x\to +\infty$.

Now, since $u_+(x;\omega_0,\lambda_0,k_0)\equiv 0$, we can define
$$
u_2(x)=\lim_{\omega\to\omega_0}\Gamma(1-2i\omega_+A_+^{-1})u_+(x;\omega,\lambda_0,k_0);
$$
it will be an outgoing solution to the equation $P_x u_2=0$ and have
$u_2(x)=e^{i\omega_{0+} x}(1+o(1))$ as $x\to+\infty$. We have
constructed two linearly independent outgoing solutions to the
equation $P_xu=0$; since this equation only has a two-dimensional
space of solutions, every its solution must be outgoing.
\end{proof} 

The next statement follows directly from the definition of an outgoing solution and will
be used in later sections:
\begin{prop}\label{l:u-complex} Fix $\delta_r>0$ and let $K_x$ be the image of the set
$K_r=(r_-+\delta_r,r_+-\delta_r)$ under the change of variables $r\to x$.
Assume that $X_0$ is chosen large enough so that Proposition~\ref{l:regge-wheeler} holds
and $K_x\subset (-X_0,X_0)$. Let
$u(x)\in H^2_{\loc}(\mathbb R)$ be any outgoing function in the sense of
Definition~\ref{d:outgoing} and assume that $f=P_x u$ is supported in $K_x$. Then:

1. $u$ can be extended holomorphically to the two half-planes $\{\pm\Real z>X_0\}$
and satisfies the equation $P_z u=0$ in these half-planes, where
$P_z=D_z^2+V_x(z)$ and $V_x(z)$ is well-defined by~\eqref{e:v-x-complex}.

2. If $\gamma$ is a contour in the complex plane given by $\Imag z=F(\Real z)$, $x_-\leq \Real z\leq x_+$,
and $F(x)=0$ for $|x|\leq X_0$, then we can define the restriction to $\gamma$
of the holomorphic extension of $u$ by
$$
u_\gamma(x)=u(x+iF(x))
$$
and $u_\gamma$ satisfies the equation $P_\gamma u_\gamma=f$, where
$$
P_\gamma=\bigg({1\over 1+iF'(x)}D_x\bigg)^2+V_x(x+iF(x)).
$$

3. Assume that $\gamma$ is as above, with $x_\pm=\pm\infty$, and
$F'(x)=c=\const$ for large $|x|$. Then $u_\gamma(x)=O(e^{\mp\Imag((1+ic)\omega_\pm)x})$
as $x\to\pm\infty$. As a consequence, if $\Imag((1+ic)\omega_\pm)>0$,
then $u_\gamma(x)\in H^2(\mathbb R)$.
\end{prop}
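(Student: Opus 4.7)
The plan is to verify the three parts in order, using only the outgoing representation $u(x)=e^{\pm i\omega_\pm x}v_\pm(e^{\mp A_\pm x})$ for $\pm x>X_0$ (Definition~\ref{d:outgoing}) combined with the fact that $V_x$ extends holomorphically in $w=e^{\mp A_\pm z}$ to the discs $\{|w|<e^{-A_\pm X_0}\}$ (Proposition~\ref{l:regge-wheeler}). For Part~1, I would simply substitute the complex variable $z$ into the outgoing formula: for $\pm\Real z>X_0$, set
$$
u(z)=e^{\pm i\omega_\pm z}\,v_\pm(e^{\mp A_\pm z}).
$$
Since $|e^{\mp A_\pm z}|=e^{-A_\pm\Real z}<e^{-A_\pm X_0}$, this lies in the disc of holomorphy of $v_\pm$, so $u(z)$ is holomorphic on each of the two half-planes. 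The equation $P_z u=0$ is an identity between two holomorphic functions which agrees with $P_x u=f=0$ on the real parts of these half-planes (since $\supp f\subset K_x\subset(-X_0,X_0)$), so it holds throughout by analytic continuation.

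Part~2 is a chain-rule verification. Differentiating $u_\gamma(x)=u(x+iF(x))$ yields $\partial_x u_\gamma=(1+iF'(x))\,u'(x+iF(x))$, hence
$$
u'(x+iF(x))=\frac{1}{1+iF'(x)}\,\partial_x u_\gamma(x),
$$
and iterating gives
$$
(D_z^2 u)(x+iF(x))=\left(\frac{1}{1+iF'(x)}D_x\right)^2 u_\gamma(x).
$$
Combined with $V_x(z)u(z)\big|_{z=x+iF(x)}=V_x(x+iF(x))u_\gamma(x)$, this produces $(P_\gamma u_\gamma)(x)=(P_z u)(x+iF(x))$. For $|x|\le X_0$ the contour lies on the real axis and the right-hand side equals $f(x)$; for $|x|>X_0$ both sides vanish by Part~1 and the support condition on $f$.

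For Part~3, choose $X_1>X_0$ so that $F(x)=cx+d_\pm$ on the two rays $\pm x\ge X_1$, with (possibly different) constants $d_\pm$. The elementary identities $i-c=i(1+ic)$ and $-i+c=-i(1+ic)$ give
$$
e^{\pm i\omega_\pm(x+i(cx+d_\pm))}=e^{\mp\omega_\pm d_\pm}\,e^{\pm i(1+ic)\omega_\pm x},
$$
whose modulus equals a constant times $e^{\mp\Imag((1+ic)\omega_\pm)x}$. Since $|e^{\mp A_\pm(x+i(cx+d_\pm))}|=e^{-A_\pm|x|}\to 0$ as $|x|\to\infty$, the factor $v_\pm(e^{\mp A_\pm(x+iF(x))})$ remains bounded (indeed converges to $v_\pm(0)$). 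This establishes the pointwise bound $u_\gamma(x)=O(e^{\mp\Imag((1+ic)\omega_\pm)x})$, and the same exponential control transfers to the $D_x$-derivatives of $u_\gamma$ via the chain rule of Part~2, which introduces only bounded factors. Hence if $\Imag((1+ic)\omega_\pm)>0$ at both ends, $u_\gamma\in H^2(\mathbb R)$. No step here poses a real obstacle: Part~1 is the content of \emph{outgoing} itself, Part~2 is a routine chain rule, and Part~3 is the one-line algebraic identity $\pm i \mp ic(\pm i) = \pm i - c$ applied to the exponential factor, together with vanishing of the inner variable $e^{\mp A_\pm z}$. The only care needed is sign bookkeeping between the two infinities.
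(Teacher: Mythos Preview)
Your proof is correct and is precisely the elaboration the paper has in mind: the paper states only that the proposition ``follows directly from the definition of an outgoing solution'' and gives no further argument, so your write-up simply fills in the routine details (holomorphic extension via the outgoing representation, chain rule along the contour, and the exponential bookkeeping at infinity). One small point worth making explicit: Definition~\ref{d:outgoing} only guarantees holomorphy of $v_\pm$ in \emph{some} neighborhood of zero, but since $P_xu=0$ on $\{\pm x>X_0\}$ and the ODE coefficients are holomorphic in $w=e^{\mp A_\pm x}$ throughout $\{|w|<e^{-A_\pm X_0}\}$ by~\eqref{e:v-x-complex}, $v_\pm$ extends to the full disc by analytic continuation of ODE solutions, justifying your substitution for all $\pm\Real z>X_0$.
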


We are now ready to prove Proposition~\ref{l:radial}.

\begin{proof}[Proof of part 1]
Given the functions $u_\pm$,
define the operator $S_x(\omega,\lambda,k)$ on $\mathbb R$ by its Schwartz kernel
$$
\begin{gathered}
S_x(x,x';\omega,\lambda,k)=u_+(x)u_-(x')[x>x']
+u_-(x)u_+(x')[x<x'].
\end{gathered}
$$
The operator $S_x(\omega,\lambda)$ acts $L^2_{\comp}(\mathbb R) \to
H^2_{\loc}(\mathbb R)$ and $P_xS_x=W(\omega,\lambda,k)$, where the Wronskian
$$
W(\omega,\lambda,k)=u_+(x;\omega,\lambda,k)\cdot \partial_x u_-(x;\omega,\lambda,k)-u_-(x;\omega,\lambda,k)\cdot \partial_xu_+(x;\omega,\lambda,k)
$$
is constant in $x$. Moreover, $W(\omega,\lambda,k)=0$ if and only if
$u_+(x;\omega,\lambda,k)$ and $u_-(x;\omega,\lambda,k)$ are linearly
dependent as functions of $x$. Also, the image of $S_x$ consists of
outgoing functions.

Now, we define the radial resolvent $R_r(\omega,\lambda,k)=R_x(\omega,\lambda,k)\Delta_r$,
where
\begin{equation}\label{e:r-x}
R_x(\omega,\lambda,k)={S_x(\omega,\lambda,k)\over W(\omega,\lambda,k)}.
\end{equation}
It is clear that $R_r$ is a meromorphic family of operators
$L^2_{\comp}\to H^2_{\loc}$ and $(P_r+\lambda)R_r$ is the identity
operator. We now prove that $R_x$, and thus $R_r$, has poles of finite
rank. Fix $k$ and take $(\omega_0,\lambda_0)\in
\{W=0\}$; we need to prove that for every $l$, the principal part 
of the Laurent decomposition of $\partial_\omega^l
R_x(\omega_0,\lambda,k)$ at $\lambda=\lambda_0$ consists of
finite-dimensional operators.  We use induction on $l$. One has
$P_x(\omega,\lambda,k)R_x(\omega,\lambda,k)=1$; differentiating this
identity $l$ times in $\omega$, we get
$$
P_x(\omega_0,\lambda,k) \partial^l_\omega R_x(\omega_0,\lambda,k)=\delta_{l0}1+\sum_{m=1}^l c_{ml}\partial^m_\omega P_x(\omega_0,\lambda,k)\partial^{l-m}_\omega R_x
(\omega_0,\lambda,k).
$$
(Here $c_{ml}$ are some constants.)
The right-hand side has poles of finite rank by the induction hypothesis. Now,
consider the Laurent decomposition
$$
\partial^l_\omega R_x(\omega_0,\lambda,k)=Q(\lambda)+\sum_{j=1}^N {R_j\over (\lambda-\lambda_0)^j}.
$$
Here $Q$ is holomorphic at $\lambda_0$. Multiplying by $P_x$, we get
$$
\sum_{j=1}^N {P_x(\omega_0,\lambda,k) R_j\over (\lambda-\lambda_0)^j}\sim\sum_{j=1}^N {L_j\over (\lambda-\lambda_0)^j}
$$
up to operators holomorphic at $\lambda_0$. Here $L_j$ are some finite-dimensional operators.
We then have
$$
\begin{gathered}
P_x(\omega_0,\lambda_0,k)R_N=L_N,\\
P_x(\omega_0,\lambda_0,k)R_{N-1}=L_{N-1}-(\partial_\lambda P_x(\omega_0,\lambda_0,k))R_N,\ \dots
\end{gathered}
$$
Each of the right-hand sides has finite rank and the kernel of $P_x(\omega_0,\lambda_0,k)$
is two-dimensional; therefore, each $R_j$ is finite-dimensional as required.
(We also see immediately that the image of each $R_j$ consists of smooth functions.)

Finally, we establish the decomposition at zero.  As in part 1 of
Proposition~\ref{l:angular}, it suffices to compute $S_x(0,0,0)$ and
the first order terms in the Taylor expansion of $W$ at $(0,0,0)$. We
have $u_\pm(x;0,0,0)=1$ for all $x$; therefore, $S_x(x,x';0,0,0)=1$.
Next, put $u_{\omega\pm}(x)=\partial_\omega u_\pm(x;0,0,0)$ and
$u_{\lambda\pm}(x)=\partial_\lambda u_\pm(x;0,0,0)$.  By
differentiating the equation $P_x u_\pm=0$ in $\omega$ and $\lambda$
and recalling the boundary conditions at $\pm\infty$, we get
$$
\begin{gathered}
\partial_x^2 u_{\lambda\pm}(x)=\Delta_r,\\
u_{\lambda\pm}(x)=v_{\lambda\pm}(e^{\mp A_\pm x}),\ \pm x\gg 0;\\
\partial_x^2 u_{\omega\pm}(x)=0,\\
u_{\omega\pm}(x)=\pm i(1+\alpha)(r_\pm^2+a^2)x+v_{\omega\pm}(e^{\mp A_\pm x}),\ \pm x\gg 0,
\end{gathered}
$$
for some functions $v_{\lambda\pm},v_{\omega\pm}$ real analytic at zero. We then find
$$
\begin{gathered}
\partial_\lambda W(0,0,0)=\partial_x(u_{-\lambda}-u_{+\lambda})=\int_{-\infty}^\infty \Delta_r\,dx
=r_+-r_-,\\
\partial_\omega W(0,0,0)=\partial_x(u_{-\omega}-u_{+\omega})=-i(1+\alpha)(r_+^2+r_-^2+2a^2).\qedhere
\end{gathered} 
$$
\end{proof}

\begin{proof}[Proof of part 3]
Assume that
$\omega$ and $\lambda$ are both real and $R_r$ has a pole at $(\omega,\lambda,k)$.
Let $u(x)$ be the corresponding resonant state; we know that it has the asymptotics
$$
\begin{gathered}
u_\pm(x)=e^{\pm i\omega_\pm x}U_\pm(1+O(e^{\mp A_\pm x})),\
x\to \pm\infty;\\
\partial_xu_\pm(x)=e^{\pm i\omega_\pm x}U_\pm(\pm i\omega_\pm+O(e^{\mp A_\pm x})),\
x\to \pm\infty 
\end{gathered}
$$
for some nonzero constants $U_\pm$.
Since $V_x(x;\omega,\lambda,k)$ is real-valued,
both $u$ and $\bar u$ solve the equation $(D_x^2+V_x(x))u=0$.
Then the Wronskian $W_u(x)=u\cdot \partial_x \bar u-\bar u\cdot \partial_x u$
must be constant; however,
$$
W_u(x)\to 
\mp 2i \omega_\pm |U_\pm|^2
\text{ as }x\to \pm\infty.
$$
Then we must have $\omega_+\omega_-\leq 0$; it follows immediately that
$|\omega|=O(|ak|)$.
\end{proof}
	
\begin{proof}[Proof of part 4]
First, assume that neither of $u_\pm$ is identically zero.
Then the resolvent $R_x$, and thus $R_r$, has a pole iff the functions
$u_\pm$ are linearly dependent, or, in other words, if there exists a
nonzero outgoing solution $u(x)$ to the equation $P_xu=0$. Now, if one
of $u_\pm$, say, $u_+$, is identically zero, then by
Proposition~\ref{l:exceptional}, $u_-$ will be an outgoing solution at
both infinities.
\end{proof}

\begin{proof}[Proof of part 5]
Assume that $u(x)$ is outgoing and $P_x(\omega,\lambda,k)u=f\in L^2(K_x)$.
Since $\Imag\omega>0$, we have $\Imag\omega_\pm>0$ and thus $u\in H^2(\mathbb R)$.

First, assume that $|\arg\omega-\pi/2|<\varepsilon$, where
$\varepsilon>0$ is a constant to be chosen later. Then
$$
\Real V_x(x)=(1+\alpha)^2(r^2+a^2)^2(\Imag\omega)^2+\Real\lambda\cdot\Delta_r-(1+\alpha)^2((r^2+a^2)\Real\omega-ak)^2;
$$
using~\eqref{e:radial-cond-2}, we can choose $\varepsilon$ and $C_{1r}$ so that
$\Real V_x(x)\geq|\omega|^2/C>0$ for all $x\in \mathbb R$. Then
$$
\begin{gathered}
\|u\|_{L^2(\mathbb R)}\cdot \|f\|_{L^2(\mathbb R)}\geq
\Real \int \bar u(x) (D_x^2+V_x(x))u(x)\,dx\\
\geq\int \Real V_x(x)|u|^2\,dx
\geq C^{-1}|\omega|^2 \|u\|_{L^2(\mathbb R)}^2
\end{gathered}
$$
and~\eqref{e:radial-est-2} follows.

Now, assume that $|\arg\omega-\pi/2|\geq\varepsilon$. Then
$$
\Imag V_x(x)=-2(1+\alpha)^2((r^2+a^2)\Real\omega-ak)(r^2+a^2)\Imag\omega+\Imag\lambda\cdot\Delta_r;
$$
it follows from~\eqref{e:radial-cond-2} that we can choose $C_{1r}$ so that the sign
of $\Imag V_x(x)$ is constant in $x$ (positive if $\arg\omega>\pi/2$ and negative otherwise) and,
in fact, $|\Imag V_x(x)|\geq |\omega|\Imag\omega/C>0$ for all $x$. Then (assuming that $\Imag V_x(x)>0$)
$$
\begin{gathered}
\|u\|_{L^2(\mathbb R)}\cdot \|f\|_{L^2(\mathbb R)}\geq
\Imag \int \bar u(x) (D_x^2+V_x(x))u(x)\,dx\\
=\int \Imag V_x(x)|u|^2\,dx
\geq C^{-1}|\omega|\Imag\omega\|u\|_{L^2(\mathbb R)}^2
\end{gathered}
$$
and~\eqref{e:radial-est-2} follows.
\end{proof}

%%%%%%%%%%%%%%%%%%%%%%%%%%%%%%%%%%%%%%%%%%%%%%%%%%%%%%%%%%%%%%%%%%%%%%%%%%%%%%%%
%                         SEMICLASSICAL PRELIMINARIES                          %
%%%%%%%%%%%%%%%%%%%%%%%%%%%%%%%%%%%%%%%%%%%%%%%%%%%%%%%%%%%%%%%%%%%%%%%%%%%%%%%%
\section{Preliminaries from semiclassical analysis}

In this section, we list certain facts from semiclassical analysis needed in the
further analysis of our radial operator. For a general introduction to semiclassical
analysis, the reader is referred to~\cite{e-z}.

Let $a(x,\xi)$ belong to the symbol class
$$
S^m=\{a(x,\xi)\in C^\infty(\mathbb R^2)\mid \sup_{x,\xi} \langle\xi\rangle^{|\beta|-m}|\partial^\alpha_x
\partial^\beta_\xi a(x,\xi)|\leq C_{\alpha\beta}\text{ for all }\alpha,\beta\}.
$$
Here $m\in \mathbb R$ and
$\langle\xi\rangle=\sqrt{1+|\xi|^2}$.
Following~\cite[Section~8.6]{e-z},
we define the corresponding semiclassical
pseudodifferential operator $a^w(x,hD_x)$ by the formula
$$
a^w(x,hD_x)u(x)={1\over 2\pi h}\int e^{{i\over h}(x-y)\eta}a\bigg({x+y\over 2},\eta\bigg)
u(y)\,dy d\eta.
$$
Here $h>0$ is the semiclassical parameter.
We denote by $\Psi^m$ the class of all semiclassical pseudodifferential operators
with symbols in $S^m$.
Introduce the semiclassical Sobolev spaces $H^l_h\subset \mathcal D'(\mathbb R)$ with the norm
$\|u\|_{H^l_h}=\|\langle hD_x\rangle^l u\|_{L^2}$;
then for $a\in S^m$, we have
$$
\|a^w(x,hD_x)\|_{H^l_h\to H^{l-m}_h}\leq C,
$$
where $C$ is a constant depending on $a$, but not on $h$. Also, if
$a(x,\xi)\in C_0^\infty(\mathbb R^2)$, then
\begin{equation}\label{e:l2-l-infty}
\|a^w(x,hD_x)\|_{L^2(\mathbb R)\to L^\infty(\mathbb R)}\leq Ch^{-1/2},
\end{equation}
where $C$ is a constant depending on $a$, but not on $h$.
(See~\cite[Theorem~7.10]{e-z} for the proof.)

General facts on multiplication of pseudodifferential operators can be found
in~\cite[Section~8.6]{e-z}. We will need the following: for $a\in S^m$ and $b\in S^n$,
\footnote{We write $A(h)=O_X(h^k)$ for some Fr\'echet space $X$, if for each
seminorm $\|\cdot\|_X$ of $X$, there exists a constant $C$ such that $\|A(h)\|_X\leq Ch^k$.
We write $A(h)=O_X(h^\infty)$ if $A(h)=O_X(h^k)$ for all $k$.}
\begin{gather}\label{e:semiclassical-disjoint-support}
\text{if }\supp a\cap\supp b=\emptyset,\text{ then }a^w(x,hD_x)b^w(x,hD_x)=O_{L^2\to H^N_h}(h^\infty)
\text{ for all }N;\\
a^w(x,hD_x)b^w(x,hD_x)=(ab)^w(x,hD_x)+O_{\Psi^{m+n-1}}(h),\\
[a^w(x,hD_x),b^w(x,hD_x)]=-ih\{a,b\}^w(x,hD_x)+O_{\Psi^{m+n-2}}(h^2).
\end{gather}
Here $\{\cdot,\cdot\}$ is the Poisson bracket, defined by
$\{a,b\}=\partial_\xi a\cdot\partial_x b-\partial_\xi b\cdot\partial_x a$.
Also, if $A\in\Psi^m$, then the adjoint operator $A^*$ also lies in $\Psi^m$
and its symbol is the complex conjugate of the symbol of $A$. 

One can study pseudodifferential operators on
manifolds~\cite[Appendix~E]{e-z}, and on particular on the circle
$\mathbb S^1=\mathbb R/2\pi \mathbb Z$. If $a(x,\xi)=a(\xi)$ is a
symbol on $T^* \mathbb S^1$ that is independent of $x$, then
$a^w(hD_x)$ is a Fourier series multiplier modulo $O(h^\infty)$:
for each $N$,
\begin{equation}\label{e:fourier-series-multiplier}
\text{if }u(x)=\sum_{j\in \mathbb Z} u_j e^{ijx},\text{ then }
a(hD_x)u(x)=\sum_{j\in \mathbb Z} a(hj)u_j e^{ijx}+O_{H^N_h}(h^\infty)\|u\|_{L^2}.
\end{equation}

In the next three propositions, we assume that $P(h)\in\Psi^m$ and
$P(h)=p^w(x,hD_x)+O_{\Psi^{m-1}}(h)$, where $p(x,\xi)\in S^m$.
\begin{prop}\label{l:elliptic} (Elliptic estimate)
Suppose that
the function $\chi\in S^0$ is chosen so that
$|p|\geq \langle\xi\rangle^m/C>0$ on $\supp\chi$ for some $h$-independent
constant $C$. Also, assume that either the set $\supp\chi$ or its complement
is precompact.
Then there exists a constant $C_1$ such that for each $u\in H^m_h$,
\begin{equation}\label{e:elliptic-estimate}
\|\chi^w(x,hD_x)u\|_{H^m_h}\leq C_1\|P(h)u\|_{L^2}+O(h^\infty)\|u\|_{L^2}.
\end{equation}
\end{prop}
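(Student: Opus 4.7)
The plan is to build a left parametrix $Q(h)\in\Psi^{-m}$ for $P(h)$ microlocalized to $\supp\chi$, by the standard iterative symbolic construction from the ellipticity of $p$, and then read off the estimate by applying $Q$ to $P(h)u$.

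First I would define the principal parametrix symbol $q_0 = \chi/p$. On $\supp\chi$ the denominator satisfies $|p|\geq\langle\xi\rangle^m/C$, and by continuity the same lower bound (with a worse constant) holds in some open neighborhood of $\supp\chi$; extending by zero outside, $q_0$ becomes a smooth function on $\mathbb R^2$. The precompactness hypothesis on $\supp\chi$ or its complement is precisely what guarantees that $q_0\in S^{-m}$ globally: in the first case $q_0\in C_0^\infty\subset S^{-\infty}$; in the second, $q_0$ agrees with $1/p$ outside a compact set, and the usual quotient estimates put it in $S^{-m}$. By construction $q_0 p = \chi$.

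Next I would iterate in powers of $h$. Using the Weyl composition formula together with $P(h)=p^w+O_{\Psi^{m-1}}(h)$, one gets
\begin{equation*}
q_0^w(x,hD_x)\,P(h) = \chi^w(x,hD_x) + h\,r_1^w(x,hD_x),
\end{equation*}
with $r_1\in S^{-1}$ supported where $q_0$ is, hence where $p$ is still elliptic. Setting $q_1=-r_1/p\in S^{-m-1}$ cancels the leading remainder, and continuing this procedure and Borel-summing the formal series $\sum h^j q_j$ yields $Q(h)\in\Psi^{-m}$ satisfying
\begin{equation*}
Q(h)\,P(h) = \chi^w(x,hD_x) + E(h),\qquad E(h)=O_{\Psi^{-\infty}}(h^\infty).
\end{equation*}
Applying this identity to $u\in H^m_h$ and using that $Q:L^2\to H^m_h$ is bounded uniformly in $h$ while $E(h):L^2\to H^m_h$ has operator norm $O(h^\infty)$, one obtains
\begin{equation*}
\|\chi^w(x,hD_x)u\|_{H^m_h}\leq \|Q(h)P(h)u\|_{H^m_h}+\|E(h)u\|_{H^m_h}\leq C_1\|P(h)u\|_{L^2}+O(h^\infty)\|u\|_{L^2},
\end{equation*}
which is exactly \eqref{e:elliptic-estimate}.

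The only genuine subtlety is at the initial step: the division $q_0=\chi/p$ must produce a globally defined element of $S^{-m}$, not merely a locally defined function on $\supp\chi$. This is the sole place where the dichotomy on $\supp\chi$ in the hypothesis is used; after that, everything is routine Weyl calculus, and no information about $p$ outside a neighborhood of $\supp\chi$ enters.
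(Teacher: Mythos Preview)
Your argument is correct and follows the same parametrix scheme as the paper's proof. The only cosmetic differences are that the paper takes $q_0$ to equal $p^{-1}$ on a neighborhood of $\supp\chi$ and writes the identity as $(Q_N(h)P(h)-1)\chi^w=O_{\Psi^{-N-1}}(h^{N+1})$ with a finite-order truncation, whereas you fold $\chi$ into the symbol via $q_0=\chi/p$ and Borel-sum; both variants are standard and yield the same estimate.
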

\begin{proof} 
The proof follows the standard parametrix construction.
We find a sequence of symbols $q_j(x,\xi;h)\in S^{-m-j}$, $j\geq 0$,
such that for
$$
Q_N(h)=\sum_{0\leq j\leq N} h^j q_j^w(x,hD_x),
$$
we get
\begin{equation}\label{e:elliptic-eq}
(Q_N(h)P(h)-1)\chi^w(x,hD_x)=O_{\Psi^{-N-1}}(h^{N+1});
\end{equation}
applying this operator equation to $u$, we prove the proposition.

We can take any $q_0\in\Psi^{-m}$ such that $q_0=p^{-1}$ near $\supp\chi$;
such a symbol exists under our assumptions. The rest of $q_j$
can be constructed by induction using the equation~\eqref{e:elliptic-eq}.
\end{proof}

\begin{prop}\label{l:garding} (G\r arding inequalities)
Suppose that $\chi\in C_0^\infty(\mathbb R^2)$.

1. If $\Real p\geq 0$ near $\supp\chi$, then there exists a constant $C$
such that for every $u\in L^2$,
\begin{equation}\label{e:sharp-garding}
\Real (P(h)\chi^w u,\chi^w u)\geq -Ch\|\chi^w u\|_{L^2}^2-O(h^\infty)\|u\|_{L^2}^2.
\end{equation}

2. If $\Real p\geq 2\varepsilon>0$ near $\supp\chi$ for some constant $\varepsilon>0$,
then for $h$ small enough and every $u\in L^2$,
\begin{equation}\label{e:garding}
\Real (P(h)\chi^w u,\chi^w u)\geq \varepsilon \|\chi^w u\|_{L^2}^2-O(h^\infty)\|u\|_{L^2}^2.
\end{equation}
\end{prop}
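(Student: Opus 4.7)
The plan is to derive both parts from the classical sharp Gårding inequality for globally nonnegative symbols, see e.g.\ \cite{e-z}, combined with a cutoff argument that exploits the compact support of $\chi$ in phase space.

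\emph{Part 1.} First I would replace $P(h)$ by its self-adjoint part: the adjoint formula in the symbol calculus gives $\tfrac12(P(h)+P(h)^*)=(\Real p)^w(x,hD_x)+O_{\Psi^{m-1}}(h)$, so it suffices to bound $((\Real p)^w v,v)$ with $v:=\chi^w(x,hD_x)u$, plus a lower-order remainder to be absorbed. Next I would choose $\tilde\chi\in C_0^\infty(\mathbb R^2)$ with $\tilde\chi=1$ on a neighborhood of $\supp\chi$ and with $\supp\tilde\chi$ still contained in the open set where $\Real p\geq 0$, and then construct a symbol $p_0\in S^m$ with $p_0\geq 0$ everywhere on $\mathbb R^2$ and $p_0=\Real p$ on $\supp\tilde\chi$ (a standard extension). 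Writing $r:=\Real p-p_0\in S^m$, this symbol vanishes on $\supp\tilde\chi$ and in particular on $\supp\chi$.

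Having made this decomposition, I would estimate the three pieces of
$$\Real(P(h)v,v)=(p_0^w v,v)+(r^w v,v)+(R(h)v,v),\quad R(h):=\tfrac12(P(h)+P(h)^*)-(\Real p)^w\in\Psi^{m-1},$$
as follows. (i) The classical sharp Gårding inequality applied to the globally nonnegative symbol $p_0$ gives $(p_0^w v,v)\geq-Ch\|v\|_{L^2}^2$. (ii) Since $\supp r\cap\supp\chi=\emptyset$, \eqref{e:semiclassical-disjoint-support} yields $r^w\chi^w=O_{L^2\to L^2}(h^\infty)$, so $|(r^w v,v)|=O(h^\infty)\|u\|_{L^2}^2$. (iii) Writing $\chi^w=\tilde\chi^w\chi^w+O_{L^2\to H^N_h}(h^\infty)$ (using $\tilde\chi=1$ on $\supp\chi$), the operator $R(h)\tilde\chi^w$ maps $L^2\to L^2$ with norm $O(h)$, since $\tilde\chi^w:L^2\to H^{m-1}_h$ is bounded uniformly in $h$ and $R(h):H^{m-1}_h\to L^2$ has norm $O(h)$; hence $|(R(h)v,v)|\leq O(h)\|v\|_{L^2}^2+O(h^\infty)\|u\|_{L^2}^2$. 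Summing the three bounds produces \eqref{e:sharp-garding}.

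\emph{Part 2.} I would simply apply part~1 to the operator $P(h)-2\varepsilon\in\Psi^m$, whose principal symbol has real part $\Real p-2\varepsilon\geq 0$ near $\supp\chi$. Part~1 yields $\Real((P(h)-2\varepsilon)\chi^w u,\chi^w u)\geq-C_0h\|\chi^w u\|_{L^2}^2-O(h^\infty)\|u\|_{L^2}^2$, so that $\Real(P(h)\chi^w u,\chi^w u)\geq(2\varepsilon-C_0h)\|\chi^w u\|_{L^2}^2-O(h^\infty)\|u\|_{L^2}^2$; choosing $h$ small enough that $C_0h\leq\varepsilon$ gives~\eqref{e:garding}. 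The factor of two in the hypothesis is precisely what absorbs the sharp Gårding error term.

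\emph{Main obstacle.} The nontrivial analytic input is the classical sharp Gårding inequality, which I quote. The only genuine technical point is step~(iii): when $m>1$ the remainder $R(h)=O_{\Psi^{m-1}}(h)$ is not a priori $L^2$-bounded, but because it appears only in the pairing against $v=\chi^w u$ which is microlocalized on the compact set $\supp\chi$, one can insert $\tilde\chi^w$ to land in $\Psi^{-\infty}$ up to $O_{L^2\to H^N_h}(h^\infty)$ errors, thereby making the contribution controllable by $O(h)\|v\|^2+O(h^\infty)\|u\|^2$.
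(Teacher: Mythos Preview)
Your proof is correct and follows essentially the same strategy as the paper: localize to the region where $\Real p\geq 0$, invoke the standard sharp G\aa rding inequality, and control the remainders using the disjoint-support property~\eqref{e:semiclassical-disjoint-support}; Part~2 is verbatim the paper's argument. The only cosmetic difference is packaging: rather than extending $\Real p$ to a globally nonnegative symbol $p_0$, the paper sandwiches $P(h)$ by a cutoff $\chi_1^w$ (with $\chi_1=1$ near $\supp\chi$ and $\Real p\geq 0$ near $\supp\chi_1$) and applies sharp G\aa rding directly to $\chi_1^w P(h)\chi_1^w$, which automatically lives in $\Psi^{-\infty}$ and so sidesteps the $H^{(m-1)/2}_h$-vs-$L^2$ issue you handle via the $\tilde\chi^w$ insertion.
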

\begin{proof}
1. Take $\chi_1\in C_0^\infty(\mathbb R^2;\mathbb R)$ such that
$\chi_1=1$ near $\supp\chi$, but $\Real p\geq 0$ near
$\supp\chi_1$. Then, apply the standard sharp G\r arding
inequality~\cite[Theorem~4.24]{e-z} to the operator $\chi_1^w
P(h)\chi_1^w$ and the function $\chi^w u$, and
use~\eqref{e:semiclassical-disjoint-support}.

2. Apply part 1 of this proposition to the operator $P(h)-2\varepsilon$.
\end{proof}

\begin{prop}\label{l:exponentiation}(Exponentiation of pseudodifferential operators)
Assume that $G\in C_0^\infty(\mathbb R^2)$, $s\in \mathbb R$,
and define the operator $e^{sG^w}:L^2\to L^2$ as
$$
e^{sG^w}=\sum_{j\geq 0} {(sG^w)^j\over j!}.
$$
Assume that $|s|$ is bounded by an $h$-independent constant.
Then:

1. $e^{sG^w}\in\Psi^0$ is a pseudodifferential operator.

2. $e^{sG^w}P(h)e^{-sG^w}=P(h)+ish(H_p G)^w+O_{L^2\to L^2}(h^2)$.
\end{prop}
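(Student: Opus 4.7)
The plan is to treat $U(s) := e^{sG^w}$ as the solution of the operator ODE $\partial_s U = G^w U$, $U(0) = I$. Since $G \in C_0^\infty$, the operator $G^w$ is smoothing and bounded on every semiclassical Sobolev space $H^r_h$ uniformly in $h$, so the exponential series converges in operator norm whenever $|s|$ is uniformly bounded. For part~1 I would build an asymptotic parametrix $\tilde U(s) \in \Psi^0$ solving the ODE modulo $O(h^\infty)$ and compare it to $U(s)$; for part~2 I would differentiate in $s$, insert the commutator formula recorded earlier in the section, and apply a Gronwall-type argument.

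For part~1, construct $u(s;x,\xi;h) \sim \sum_{j \geq 0} h^j u_j(s;x,\xi)$ so that $\tilde U(s) := u(s)^w(x,hD_x)$ satisfies $(\partial_s - G^w) \tilde U(s) = O(h^\infty)$ in every Sobolev pairing and $\tilde U(0) = I + O(h^\infty)$. The leading transport equation $\partial_s u_0 = G u_0$, $u_0(0) = 1$, yields $u_0(s;x,\xi) = e^{sG(x,\xi)}$, which equals $1$ off $\supp G$, so $u_0 - 1 \in C_0^\infty$. The higher $u_j$ are determined recursively by $\partial_s u_j - G u_j = F_j(u_0,\dots,u_{j-1})$, where $F_j$ collects the $j$-th order contributions of the Moyal expansion of $G \# u$; each $F_j$ vanishes off $\supp G$, so each $u_j$ with $j \geq 1$ is compactly supported. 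Borel summation then gives $u \in S^0$ and $\tilde U(s) \in \Psi^0$. The difference $E(s) = U(s) - \tilde U(s)$ obeys $E'(s) = G^w E(s) + R(s)$ with $R(s)$ and $E(0)$ both $O(h^\infty)$ as operators between any pair of Sobolev spaces; Duhamel's formula, combined with the uniform bound $\|e^{sG^w}\|_{H^r_h \to H^r_h} = O(1)$ for each $r$, yields $E(s) = O_{H^{-N}_h \to H^N_h}(h^\infty)$ for every $N$. Hence $E(s)$ is smoothing with $O(h^\infty)$ norms, and $U(s) = \tilde U(s) + E(s) \in \Psi^0$.

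For part~2, set $A(s) = e^{sG^w} P(h) e^{-sG^w}$ and $B(s) = A(s) - P(h) - ish(H_p G)^w$; the goal is $B(s) = O_{L^2\to L^2}(h^2)$. Clearly $B(0) = 0$, and $A'(s) = [G^w, A(s)]$. The commutator formula from the section combined with the paper's sign convention $\{G,p\} = -H_p G$ gives $[G^w, P(h)] - ih(H_p G)^w = O_{L^2\to L^2}(h^2)$, the $L^2\to L^2$ upgrade of the $O_{\Psi^{m-2}}(h^2)$ error following from compact support of $G$. Similarly $[G^w, (H_p G)^w] = O_{L^2\to L^2}(h)$, so a direct computation yields $B'(s) = [G^w, B(s)] + O_{L^2\to L^2}(h^2)$. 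Conjugating by $e^{\pm sG^w}$, the operator $\tilde B(s) := e^{-sG^w} B(s) e^{sG^w}$ satisfies $\tilde B'(s) = e^{-sG^w}(B'(s) - [G^w, B(s)])e^{sG^w} = O_{L^2\to L^2}(h^2)$ with $\tilde B(0) = 0$, hence $\tilde B(s) = O(h^2)$ by integration and $B(s) = e^{sG^w}\tilde B(s) e^{-sG^w} = O_{L^2\to L^2}(h^2)$.

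The main obstacle is part~1: establishing that the parametrix remainder $E(s)$ is smoothing with $O(h^\infty)$ norms in every Sobolev pairing $H^{-N}_h \to H^N_h$. This rests on the uniform boundedness of $e^{sG^w}$ on each $H^r_h$, which in turn uses that $G$ is compactly supported so that $G^w$ is smoothing. With that ingredient in place, part~2 reduces to a clean Gronwall estimate on top of the symbolic calculus already developed.
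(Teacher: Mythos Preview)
Your proof is correct. For Part~1 you carry out in detail precisely the evolution-equation parametrix construction that the paper only sketches (the paper cites \cite[Theorem~8.3]{e-z} and remarks that the full symbol is determined by the ODE $\partial_s U = G^w U$ and equals $1$ off $\supp G$); your Borel-summation-plus-Duhamel argument is the standard way to make that sketch rigorous. For Part~2 there is a mild but genuine difference: the paper's hint ``differentiate both sides in $s$, divide by $h$, compare principal symbols'' implicitly invokes Part~1 to know that $A(s)=e^{sG^w}P(h)e^{-sG^w}\in\Psi^m$ with principal symbol $p$ uniformly in $s$, so that $h^{-1}[G^w,A(s)]$ has principal symbol $H_pG$ and integrating gives the claim. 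Your Gronwall-via-conjugation argument instead works entirely in $L^2\to L^2$ operator norm, using only the commutator formula and the elementary uniform $L^2$-boundedness of $e^{\pm sG^w}$; it never needs the pseudodifferential structure of $e^{sG^w}$. Both routes are short, but yours has the pleasant feature of decoupling Part~2 from Part~1.
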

\begin{proof} 1. See for example~\cite[Theorem~8.3]{e-z} (with $m(x,\xi)=1$).
The full symbol of $e^{sG^w}$ can be recovered from the evolution equation satisfied
by this family of operators; we see that it is equal to 1 outside of
a compact set.

2. It suffices to differentiate both sides of the equation in $s$, divide
them by $h$, and compare the principal symbols.
\end{proof} 

%%%%%%%%%%%%%%%%%%%%%%%%%%%%%%%%%%%%%%%%%%%%%%%%%%%%%%%%%%%%%%%%%%%%%%%%%%%%%%%%
%                SEMICLASSICAL ANALYSIS OF THE RADIAL OPERATOR                 %
%%%%%%%%%%%%%%%%%%%%%%%%%%%%%%%%%%%%%%%%%%%%%%%%%%%%%%%%%%%%%%%%%%%%%%%%%%%%%%%%
\section{Analysis near the zero energy}

In this section, we prove part 2 of Proposition~\ref{l:radial}.
Take $h>0$ such that $\Real\lambda=h^{-2}$. Put
$$
\tilde\mu=h^2\Imag\lambda,\
\tilde k=hk,\
\tilde\omega=h\omega,\
\tilde\omega_\pm=h\omega_\pm;
$$
then \eqref{e:radial-cond} implies that
\begin{equation}\label{e:semiclassical-small}
|\mu|\leq \varepsilon_r,\
|a\tilde k|\leq \varepsilon_r,\
|\tilde\omega|\leq\varepsilon_r,\
|\tilde\omega_\pm|\leq \varepsilon_r,
\end{equation}
where $\varepsilon_r>0$ and $h$ can be made arbitrarily small by
choice of $C_r$ and $\psi$. If $P_x$ is the operator
in~\eqref{e:p-x}, then $P_x=h^{-2}\tilde P_x$, where
$$
\begin{gathered}
\tilde P_x(h;\tilde\omega,\tilde\mu,\tilde k)=h^2D_x^2+\widetilde V_x(x;\tilde\omega,\tilde\mu,\tilde k),\\
\widetilde V_x(x;\tilde\omega,\tilde\mu,\tilde k)=(1+i\tilde\mu)\Delta_r-(1+\alpha)^2((r^2+a^2)\tilde\omega-a\tilde k)^2.
\end{gathered}
$$

Now, we use Proposition~\ref{l:u-complex}.
Let $u$ be an outgoing function in the sense of Definition~\ref{d:outgoing}
and assume that $f=\tilde P_x u$ is supported in $K_x$. Then $u$ satisfies~\eqref{e:u-pm}
for $|x|>X_0$ and some functions $v_\pm$.
Fix $x_+>X_0$ and consider the function
\begin{equation}\label{e:v-1}
v_1(y)=v_+(e^{-A_+(x_++iy)};\omega,\lambda,k),\
y\in \mathbb R.
\end{equation}
This is a $2\pi/A_+$-periodic function; we can think of it as a
function on the circle. It follows from the differential
equation satisfied by $v_+$ together with Cauchy-Riemann equations that
$Q(h)v_1(y)=0$, where
$$
Q(h;\tilde\omega,\tilde \mu,\tilde k)=(-ihD_y+\tilde\omega_+)^2+\widetilde V_x(x_++iy;\tilde\omega,\tilde\mu,\tilde k).
$$
Let $q(y,\eta)$ be the semiclassical symbol of $Q$:
$$
q(y,\eta)=(-i\eta+\tilde\omega_+)^2+\widetilde V_x(x_++iy).
$$
For small $h$, the function $v_1(y)$ has to be (semiclassically)
microlocalized on the
set $\{q=0\}$. Since the symbol $q$ is complex-valued, in a generic
situation this set will consist of isolated points. Also, since $v_1$
is the restriction to a certain circle of the function $v_+$, which is
holomorphic inside this circle, it is microlocalized in $\{\eta\leq
0\}$. Therefore, if the equation $q(y,\eta)=0$ has only one root with
$\eta\leq 0$, then the function $v_1$ has to be microlocalized at this
root. If furthermore $\bar q$ satisfies H\"ormander's hypoellipticity
condition, one can obtain an asymptotic decomposition of $v_1$ in
powers of $h$. We will only need a weak corollary of such
decomposition; here is a self-contained proof of the required
estimates:

\begin{prop}\label{l:vertical}
Assume that $x_+>X_0$ is chosen so that:
\begin{itemize}
\item the equation $q(y,\eta)=0$, $y\in \mathbb S^1$, has exactly one root $(y_0,\eta_0)$ such that
$\eta_0<0$;
\item the equation $q(y,\eta)=0$ has no roots with $\eta=0$;
\item the condition $i\{q,\bar q\}<0$
is satisfied at $(y_0,\eta_0)$;
\item $\Real(\eta_0+i\tilde\omega_+)<0$.
\end{itemize}
(If all of the above hold, we say that we have \textbf{vertical
control} at $x_+$ and $(y_0,\eta_0)$ is called the
\textbf{microlocalization point}.) Let $\eta(y)$ be the family of
solutions to $q(y,\eta(y))=0$ with $\eta(y_0)=\eta_0$.  Then for each
$N$, each $\chi(y,\eta)\in C_0^\infty$ that is equal to 1 near
$(y_0,\eta_0)$, and $h$ small enough, we have
\begin{gather}\label{e:vertical-1}
\|(1-\chi^w(y,hD_y))v_1\|_{H^N_h}=O(h^\infty)\|v_1\|_{L^2},\\
\label{e:vertical-2}
\|(hD_y-\eta(y))v_1\|_{H^N_h}=O(h)\|v_1\|_{L^2},\\
\label{e:vertical-3}
\|v_1\|_{L^2}\leq Ch^{1/4}|v_1(y_0)|,\\
\label{e:vertical-4}
|(hD_y-\eta_0)v_1(y_0)|\leq Ch^{1/2}\|v_1\|_{L^2},\\
\label{e:vertical-5}
\Real\bigg({h\partial_x u_+(x_++iy_0)\over u_+(x_++iy_0)}\bigg)\leq -{1\over C}<0.
\end{gather}
%If we have vertical control for all values of parameters
%$(x_+,a,\tilde\omega,\tilde\mu,\tilde k)$ varying in some compact set, then the
%bounds on how small $h$ should be and the constants in the estimates
%above can be chosen uniformly.
Similar statements are true for $u_+$
replaced by $u_-$, with the opposite inequality sign in
\eqref{e:vertical-5}.
\end{prop}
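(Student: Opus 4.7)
The plan is to prove the five estimates in order: microlocalization of $v_1$ at $(y_0,\eta_0)$ from the holomorphy of $v_+$; a first-order approximation from factoring $q$; pointwise bounds from a coherent-state/FBI argument; and a direct evaluation of the logarithmic derivative of $u_+$.

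First I would establish \eqref{e:vertical-1}. Since $v_+(w)=\sum_{j\geq 0}c_j w^j$ is holomorphic, the substitution $w=e^{-A_+(x_++iy)}$ gives a Fourier series for $v_1$ with one-sided (non-positive) frequencies, so by \eqref{e:fourier-series-multiplier} $v_1$ is microlocalized in $\{\eta\leq 0\}$ modulo $O(h^\infty)$. On $\{\eta\leq 0\}\setminus U$ for a small neighborhood $U$ of $(y_0,\eta_0)$, the symbol $q$ is elliptic (by the first two bullets, and because $q$ grows quadratically in $\eta$), so Proposition~\ref{l:elliptic} applied to $Qv_1=0$ yields \eqref{e:vertical-1}. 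For \eqref{e:vertical-2}, the fourth hypothesis gives $\partial_\eta q(y_0,\eta_0)=-2(\eta_0+i\tilde\omega_+)\neq 0$, so the implicit function theorem factors $q(y,\eta)=e(y,\eta)(\eta-\eta(y))$ near $(y_0,\eta_0)$ with $e$ nonvanishing; extending globally and composing symbols, $Q=E^w\circ(hD_y-\eta(y))+O_{\Psi^0}(h)$ microlocally near $(y_0,\eta_0)$, with $E$ elliptic. Inverting $E$ microlocally in $Qv_1=0$ and using \eqref{e:vertical-1} to discard contributions from $(1-\chi^w)v_1$ converts this into \eqref{e:vertical-2}.

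The pointwise bounds \eqref{e:vertical-3} and \eqref{e:vertical-4} are the crux and the main technical obstacle. The hypothesis $i\{q,\bar q\}(y_0,\eta_0)<0$ is the standard non-degeneracy condition forcing $v_1$ to behave like a Gaussian coherent state concentrated at $(y_0,\eta_0)$ with width $\sqrt h$. I would conjugate $Q$ by $e^{G^w/h}$ (Proposition~\ref{l:exponentiation}) for a smooth compactly supported real $G$ vanishing at $(y_0,\eta_0)$ with Hessian chosen so that the transported symbol $q+ih\{q,G\}+O(h^2)$ becomes elliptic in a punctured neighborhood of $(y_0,\eta_0)$; the sign of $i\{q,\bar q\}$ is exactly what allows such a $G$ to exist. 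The result is that $v_1/\|v_1\|_{L^2}$ equals a Gaussian coherent state at $(y_0,\eta_0)$ modulo $O(h^{1/2})$ in $L^\infty$, and the $L^2\to L^\infty$ bound \eqref{e:l2-l-infty} then converts its $L^2$-scale into $|v_1(y_0)|\sim h^{-1/4}\|v_1\|_{L^2}$, giving \eqref{e:vertical-3}; the operator $hD_y$ shifts the central frequency by $\eta_0$ with an $O(h^{1/2})$ error, giving \eqref{e:vertical-4}. Making the sharp powers $h^{1/4}$ and $h^{1/2}$ rigorous is the delicate part, as it requires the Gaussian ansatz to match the symplectic geometry of $q$ precisely.

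Finally, \eqref{e:vertical-5} reduces to algebra. Differentiating $u_+(x)=e^{i\omega_+x}v_+(e^{-A_+x})$ and using \eqref{e:v-1} together with $hv_1'(y)=i(hD_y v_1)(y)$ gives
\[
\frac{h\,\partial_x u_+(x_++iy_0)}{u_+(x_++iy_0)}=i\tilde\omega_++\frac{(hD_y v_1)(y_0)}{v_1(y_0)}.
\]
Combining \eqref{e:vertical-2} at $y=y_0$ with \eqref{e:vertical-3}--\eqref{e:vertical-4} yields $(hD_y v_1)(y_0)=\eta_0 v_1(y_0)+O(h^{3/4})|v_1(y_0)|$, so the right-hand side equals $i\tilde\omega_++\eta_0+O(h^{3/4})$. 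Taking real parts and invoking the last hypothesis $\Real(\eta_0+i\tilde\omega_+)<0$ yields \eqref{e:vertical-5} for $h$ small. The argument for $u_-$ is the same up to sign changes.
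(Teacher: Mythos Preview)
Your arguments for \eqref{e:vertical-1}, \eqref{e:vertical-2}, and \eqref{e:vertical-5} match the paper's. The divergence is in \eqref{e:vertical-3} and \eqref{e:vertical-4}, and there your proposal has a real gap. You invoke Proposition~\ref{l:exponentiation} to conjugate by $e^{G^w/h}$, but that proposition is stated only for $e^{sG^w}$ with $|s|$ bounded independently of $h$; taking $s=1/h$ is exactly the regime where the calculus breaks down and the conjugated operator is no longer pseudodifferential in the standard class. The heuristic that $v_1$ looks like a Gaussian coherent state is correct, but you have not supplied a mechanism that actually produces the sharp powers $h^{1/4}$ and $h^{1/2}$, and you acknowledge this yourself.

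The paper avoids this entirely with a much more elementary argument. For \eqref{e:vertical-3}, having already established \eqref{e:vertical-2}, one writes $f=(h\partial_y-i\eta(y))v_1$ with $\|f\|_{L^2}=O(h)\|v_1\|_{L^2}$, solves this first-order ODE by Duhamel from the base point $y_0$, and uses that $i\{q,\bar q\}(y_0,\eta_0)<0$ is equivalent to $\Imag\partial_y\eta(y_0)>0$; this makes the phase $\Phi(y)=\int_{y_0}^y\eta$ satisfy $\Imag(\Phi(y)-\Phi(y'))\geq\beta((y-y_0)^2-(y'-y_0)^2)$ for $y'$ between $y_0$ and $y$. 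A direct Schur-type kernel estimate then shows the Duhamel integral operator has $L^2\to L^2$ norm $O(h^{-1/2})$, and since the homogeneous part $e^{i\Phi(y)/h}v_1(y_0)$ has $L^2$ norm $\sim h^{1/4}|v_1(y_0)|$, the bound \eqref{e:vertical-3} follows. For \eqref{e:vertical-4} the paper is even simpler than your coherent-state route: combine \eqref{e:vertical-2} with the $L^2\to L^\infty$ bound \eqref{e:l2-l-infty} (losing $h^{-1/2}$) and the microlocalization \eqref{e:vertical-1}, to get $\|(hD_y-\eta(y))v_1\|_{L^\infty}=O(h^{1/2})\|v_1\|_{L^2}$, then evaluate at $y=y_0$. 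No Gaussian ansatz is needed.
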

\begin{proof}
\textit{\eqref{e:vertical-1}:} We know that
$$
\inf\{\eta\mid q(y,\eta)=0,\ (y,\eta)\neq (y_0,\eta_0)\}>0.
$$
Therefore, we can decompose $1=\chi+\chi_++\chi_0$, where $\chi_+$
depends only on the $\eta$ variable, is supported in $\{\eta>0\}$, and
is equal to 1 for large positive $\eta$ and near every root of the
equation $q(y,\eta)=0$ with $\eta>0$. Since $v_+$ is holomorphic at
zero, its Taylor series provides the Fourier series for $v_1$; it then
follows from \eqref{e:fourier-series-multiplier} that
$$
\|\chi_+^w(y,hD_y)v_1\|_{H_h^N}=O(h^\infty)\|v_1\|_{L^2}.
$$
Next, the symbol $q$ is elliptic near $\supp\chi_0$; therefore, by
Proposition~\ref{l:elliptic} (whose proof applies
without changes to our case), since $Q(h)v_1=0$, we have
$$
\|\chi_0^w(y,hD_y)v_1\|_{H_h^N}=O(h^\infty)\|v_1\|_{L^2}.
$$
This finishes the proof.\smallskip

\textit{\eqref{e:vertical-2}:}
Take a small cutoff $\chi$ as above, and factor $q=(\eta-\eta(y))q_1$,
where $q_1(y,\eta)$ is nonzero near $\supp\chi$. We then find a
compactly supported symbol $r_1$ with $r_1q_1=1$ near
$\supp\chi$. Now, we have
$$
\begin{gathered}
\|\chi^w(y,hD_y)(r_1^w(y,hD_y)q_1^w(y,hD_y)-1)(hD_y-\eta(y))v_1\|_{H^N_h}=O(h)\|v_1\|_{L^2},\\
\|(1-\chi^w(y,hD_y))(r_1^w(y,hD_y)q_1^w(y,hD_y)-1)(hD_y-\eta(y))v_1\|_{H^N_h}=O(h^\infty)\|v_1\|_{L^2},\\
\|r_1^w(y,hD_y)(q_1^w(y,hD_y)(hD_y-\eta(y))-Q(h))v_1\|_{H^N_h}=O(h)\|v_1\|_{L^2}.
\end{gathered}
$$
It remains to add these up.\smallskip

\textit{\eqref{e:vertical-3}:} 
We cut off $v_1$ to make it supported in a small $\varepsilon$-neighborhood
of $y_0$. Put $f=(h\partial_y-i\eta(y))v_1$; we know that
$\|f\|_{L^2}\leq Ch\|v_1\|_{L^2}$. Now, put
$$
\Phi(y)=\int_{y_0}^y \eta(y')\,dy'.
$$
The condition $i\{q,\bar q\}|_{(y_0,\eta_0)}<0$ is equivalent to
$$
\Imag \partial_y\eta(y_0)>0;
$$
it follows that
\begin{equation}\label{e:vertical-3-ineq}
\Imag(\Phi(y)-\Phi(y'))\geq\beta ((y-y_0)^2-(y'-y_0)^2)
\end{equation}
for some $\beta>0$, $|y-y_0|<\varepsilon$, and $y'$ between $y$ and $y_0$.
(To see that,
represent the left-hand side as an integral.)
Now,
$$
v_1(y)=e^{i\Phi(y)/h}v_1(y_0)+h^{-1}\int_{y_0}^y e^{i(\Phi(y)-\Phi(y'))/h}f(y')\,dy'.
$$
Let $Tf(y)$ be the second term in the sum above; it suffices to prove that
$$
\|Tf\|_{L^2(y_0-\varepsilon,y_0+\varepsilon)}\leq Ch^{-1/2}\|f\|_{L^2(y_0-\varepsilon,y_0+\varepsilon)}.
$$
This can be reduced to the inequalities
$$
\begin{gathered}
\sup_{0\leq y-y_0<\varepsilon} \int_{y_0}^y |e^{i(\Phi(y)-\Phi(y'))/h}|\,dy'=O(h^{1/2}),\\
\sup_{0\leq y'-y_0<\varepsilon} \int_{y'}^{y_0+\varepsilon} |e^{i(\Phi(y)-\Phi(y'))/h}|\,dy=O(h^{1/2}).
\end{gathered}
$$
and similar inequalities for the case $y,y'<y_0$.  We now
use~\eqref{e:vertical-3-ineq}; after a change of variables, it
suffices to prove that
$$
\begin{gathered}
\sup_{y>0}\int_0^y e^{(y')^2-y^2}\,dy'<\infty,\
\sup_{y'>0}\int_{y'}^\infty e^{(y')^2-y^2}\,dy<\infty.
\end{gathered}
$$
To prove the first of these inequalities, make the change of variables
$y'=ys$; then the integral becomes
$$
\int_0^1 ye^{y^2(s^2-1)}\,ds.
$$
However, $ye^{y^2(s^2-1)}\leq C(1-s^2)^{-1/2}$, and the integral
of the latter converges.

After the change of variables $y=y'+s$, the integral of the second inequality above becomes
$$
\int_0^\infty e^{-2y's-s^2}\,ds.
$$
This can be estimated by $\int e^{-s^2}\,ds$.\smallskip

\textit{\eqref{e:vertical-4}:} Let $\chi\in C_0^\infty(\mathbb R^2)$ have $\chi=1$ near $(y_0,\eta_0)$.
Combining~\eqref{e:l2-l-infty} and~\eqref{e:vertical-2} with
$$
\|(1-\chi^w(y,hD_y))(hD_y-\eta(y))v_1\|_{L^\infty}=O(h^\infty)\|v_1\|_{L^2},
$$
we get $\|(hD_y-\eta(y))v_1\|_{L^\infty}=O(h^{1/2})\|v_1\|_{L^2}$;
it remains to take $y=y_0$.

\textit{\eqref{e:vertical-5}:} Follows immediately from \eqref{e:vertical-3},
\eqref{e:vertical-4}, \eqref{e:v-1}, Cauchy-Riemann equations, and
the fact that $\Real(\eta_0+i\tilde\omega_+)<0$.
\end{proof}

If $\tilde P_x$ were a semiclassical Schr\"odinger operator with a strictly
positive potential, then a standard integration by parts argument
would give us $\|u\|_{L^2}\leq C\|\tilde P_xu\|_{L^2}$ on any interval for
each function $u$ satisfying the condition~\eqref{e:vertical-5} at the
right endpoint of this interval and the opposite condition at its left
endpoint.  We now generalize this argument to our case.  Assume that
we have vertical control at the points $x_\pm$, $\pm x_\pm>X_0$, and
let $(y_\pm,\eta_\pm)$ be the corresponding microlocalization points.
Let $\gamma$ be a contour in the $z$ plane; we say that we have
\textbf{horizontal control} on $\gamma$ if:
\begin{itemize}
\item $\gamma\cap \mathbb \{|\Real z|\leq X_0\}\subset \mathbb R$;
\item the endpoints of $\gamma$ are $z_\pm=x_\pm+iy_\pm$;
\item $\gamma$ is given by $\Imag z=F(\Real z)$, where $F$ is a smooth function and $F'(x_\pm)=0$;
\item $\Real[(1+iF'(x))\widetilde V_x(x+iF(x))]\geq {1\over C_1}> 0$ for all $x$.
\end{itemize}

Now, let $u$ be as in the beginning of this section
and define $u_\gamma(x)$, $x_-\leq x\leq x_+$,
by Proposition~\ref{l:u-complex}. Then $\tilde P_\gamma u_\gamma=f$, where
$$
\tilde P_\gamma=\bigg({1\over 1+iF'(x)}hD_x\bigg)^2+\widetilde V_x(x+iF(x)).
$$
\begin{figure}
\includegraphics{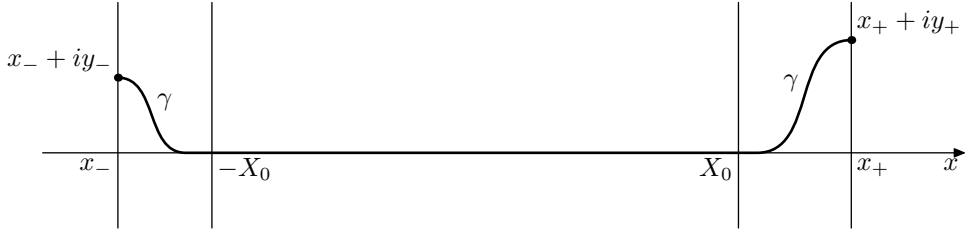}
\caption{A contour with horizontal control}
\end{figure}
If we have vertical control at the endpoints of $\gamma$, then
by~\eqref{e:vertical-5},
$$
\pm\Real(\overline{u_\gamma(x_\pm)}h\partial_x u_\gamma(x_\pm))\leq -|u_\gamma(x_\pm)|^2/C<0.
$$
Now, assume that we have horizontal control on $\gamma$.
Then we can integrate by parts to get
\begin{equation}\label{e:ibp}
\begin{gathered}
\int_{x_-}^{x_+} \Real (\overline{u_\gamma}(1+iF'(x))f)\,dx=
\int_{x_-}^{x_+} \Real (\overline{u_\gamma}\cdot (1+iF'(x))\tilde P_\gamma u_\gamma)\,dx\\
= \int_{x_-}^{x_+} \Real {|hD_xu_\gamma |^2\over 1+iF'(x)}\,dx
+\int_{x_-}^{x_+} \Real[(1+iF'(x))\widetilde V_x(x+iF(x))]\cdot |u_\gamma|^2\,dx\\
-h^2\Real(\overline{u_\gamma}\partial_x u_\gamma)|_{x=x_-}^{x_+}
\geq {1\over C_1}(\|u_\gamma\|_{L^2}^2+h(|u_\gamma(x_+)|^2+|u_\gamma(x_-)|^2)).
\end{gathered} 
\end{equation}
Therefore,
$$
\|u_\gamma\|_{L^2}\leq C\|f\|_{L^2}.
$$
It follows that the operator $R_x$ from~\eqref{e:r-x} is correctly defined and
$$
\|1_{K_x}R_x 1_{K_x}\|_{L^2\to L^2}\leq Ch^2,
$$
This proves the estimate~\eqref{e:radial-est} under the assumptions made above.

We now prove~\eqref{e:radial-est-1.5}. We concentrate on
the estimate on $K_+$; the case of $K_-$ is considered in a similar fashion.
First of all, it follows from~\eqref{e:ibp} that
\begin{equation}\label{e:exponential-est-1}
|u_\gamma(x_+)|\leq Ch^{-1/2}\|f\|_{L^2}.
\end{equation}
Now, assume that we have vertical control at every point of the interval
$I_+=[x_+,x_++1]$ and let $(y(x),\eta(x))$, $x\in I_+$, be the corresponding
microlocalization points. Let $v_x(z)=e^{-i\omega_+ z}u(z)$ and 
put $v_2(x)=v_x(x+iy(x))$; then
\begin{equation}\label{e:exponential-est-2}
|v_2(x_+)|\leq Ce^{\Imag(\omega_+ z_+)}|u_\gamma(x_+)|.
\end{equation}
Now, by Proposition~\ref{l:vertical}, we have
\begin{equation}\label{e:exponential-est-3}
|h \partial_x \ln |v_2(x)|-\eta(x)|\leq Ch^{3/4},\
x\in I_+.
\end{equation}
Integrating~\eqref{e:exponential-est-3} and combining it with~\eqref{e:exponential-est-1}
and~\eqref{e:exponential-est-2}, we see that if
\begin{equation}\label{e:exponential-integral-cond}
\Imag(\tilde\omega_+z_+)+\int_{x_+}^{x_++1}\eta(x)\,dx<-2\delta_0
\end{equation}
for some $\delta_0>0$, then
$
|v_2(x_++1)|\leq C e^{-\delta_0/h}\|f\|_{L^2}
$.
Next, $v_2(x_++1)$ is the value of $v$ at the microlocalization point;
therefore, by Proposition~\ref{l:vertical} and~\eqref{e:l2-l-infty},
$$
\sup_{y\in \mathbb R} |v_x(x_++1+iy)|\leq Ch^{-1/4}e^{-\delta_0/h}\|f\|_{L^2}. 
$$
Finally, recall that $v_x(z)=v_w(e^{-A_+z})$, where the function
$v_w(w)$ is holomorphic inside the disc $B_w=\{|w|\leq e^{-A_+(x_++1)}\}$.
The change of variables $w\to r$ is holomorphic by Proposition~\ref{e:regge-wheeler};
let $K_+^w$ be the image of $K_+$ under this change of variables. If $\delta_{r0}$
is small enough, then $K_+^w$ lies in the interior of $B_w$; then by the maximum principle
and Cauchy estimates on derivatives, we can estimate $\|v_w\|_{C^N(K_+^w)}$
for each $N$ by $O(h^\infty)\|f\|_{L^2}$. This completes the proof of~\eqref{e:radial-est-1.5}
if the conditions above are satisfied.

To prove part~2 of Proposition~\ref{l:radial}, it remains to establish
both vertical and horizontal control in our situation:
\begin{prop}\label{l:geometric}
Assume that $\delta_r>0$.
Then there exist $\varepsilon_r$ and $x_\pm$, $\pm x_\pm>X_0$, such that
under the conditions~\eqref{e:semiclassical-small}, 
\begin{itemize}
\item we have vertical control at every point of the intervals $I_+=[x_+,x_++1]$
and $I_-=[x_--1,x_-]$;
\item we have horizontal control on a certain contour $\gamma$;
\item the inequality~\eqref{e:exponential-integral-cond} (and its analogue on $I_-$)
holds.
\end{itemize}
\end{prop}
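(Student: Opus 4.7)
The plan is to take the simplest admissible geometry: set $y_\pm = 0$ and let $\gamma = [x_-, x_+]$ with $F \equiv 0$. Then verify all three conditions by continuity from the ``unperturbed'' case $\tilde\omega = \tilde k = \tilde\mu = 0$, where the potential reduces to $\widetilde V_x = \Delta_r(r(x))$, which is strictly positive on the real interior and decays exponentially at $\pm\infty$ with the explicit asymptotic form provided by Proposition~\ref{l:regge-wheeler}.

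First I would fix $x_\pm$ with $\pm x_\pm$ slightly larger than $X_0$, so that the representation $V_x(x) = V_\pm(e^{\mp A_\pm x})$ from~\eqref{e:v-x-complex} is valid on a neighborhood of $x_\pm$ in $\mathbb C$, and $\Delta_r(r(x))$ is bounded below by an absolute positive constant on $[x_-, x_+]$ and on $I_\pm$. Horizontal control with $F \equiv 0$ then reduces to $\Real\widetilde V_x(x) \geq 1/C_1 > 0$ on $[x_-, x_+]$: in the unperturbed case this is $\Delta_r(r(x))$, bounded below by a positive constant on this compact interval, and the perturbation is $O(\varepsilon_r^2) + O(\varepsilon_r)\Delta_r$, which preserves positivity once $\varepsilon_r$ is small enough.

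Next, for vertical control at each $x \in I_+$, in the unperturbed case the symbol is $q_x(y,\eta) = -\eta^2 + \Delta_r(r(x+iy))$, and the leading-order asymptotic $\Delta_r(r(x+iy)) \approx A_+c_+ e^{-A_+(x+iy)}$ shows that on the circle $y \in \mathbb R/(2\pi/A_+)\mathbb Z$ the equation $q_x = 0$ has the unique real root $(y_0,\eta_0(x)) = (0,-\sqrt{\Delta_r(r(x))})$. A direct Poisson-bracket computation gives $i\{q_x,\bar q_x\}|_{(0,\eta_0)} = -4A_+|\eta_0(x)|^3 < 0$, and $\Real(\eta_0 + i\tilde\omega_+) = \eta_0 + O(\varepsilon_r) < 0$ since $|\eta_0(x)|$ is uniformly bounded below on $I_+$. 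These strict open conditions persist for $\varepsilon_r$ small, and an implicit-function argument produces the perturbed microlocalization point $(y(x),\eta(x))$ with $\eta(x) = -\sqrt{\Delta_r(r(x))} + O(\varepsilon_r)$. The analysis on $I_-$ is symmetric, with the sign of $\eta_0$ reversed.

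Finally, for the integral inequality, with $z_+ = x_+$ real one has
$$
\Imag(\tilde\omega_+ z_+) + \int_{x_+}^{x_++1} \eta(x)\,dx = x_+\Imag\tilde\omega_+ - \int_{x_+}^{x_++1}\sqrt{\Delta_r(r(x))}\,dx + O(\varepsilon_r).
$$
Since $x_+$ is fixed first and the integral is a fixed positive constant $c_+ > 0$ depending on $\delta_r$, setting $\delta_0 = c_+/3$ and shrinking $\varepsilon_r$ so that $x_+|\Imag\tilde\omega_+| < c_+/3$ yields the bound. The main obstacle is essentially bookkeeping: the quantifier order requires first choosing $x_\pm$ (and hence $c_+$, $\delta_1$, $\delta_2$, $X_0$) as absolute constants determined by $\delta_r$, and only then selecting $\varepsilon_r$ small enough to preserve all of horizontal control, the non-degeneracy and sign conditions of vertical control at every point of $I_\pm$, and the integral inequality \eqref{e:exponential-integral-cond} simultaneously and uniformly over the parameter box~\eqref{e:semiclassical-small}.
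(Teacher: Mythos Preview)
Your overall strategy matches the paper's: verify everything in the unperturbed case $\tilde\omega=\tilde\mu=a\tilde k=0$ and then argue by continuity. However, there is a genuine gap in the perturbation step. You declare $y_\pm=0$ and $F\equiv 0$ throughout, but the $y_\pm$ are not free parameters: they are the $y$-coordinates of the microlocalization points, determined by $q(y,\eta)=0$, and after perturbation they generically leave the real axis. Indeed, at $y=0$ the value $\widetilde V_x(x_+)$ acquires a nonzero imaginary part (from $i\tilde\mu\Delta_r$ and the complex $\tilde\omega$), so there is no real $\eta$ with $q(0,\eta)=0$. You yourself write that the implicit function theorem produces a perturbed point $(y(x),\eta(x))$, and then in the next paragraph set $z_+=x_+$ real; these are inconsistent. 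Since the definition of horizontal control requires the endpoints of $\gamma$ to be exactly $z_\pm=x_\pm+iy_\pm$, the real segment $[x_-,x_+]$ no longer qualifies once $y_\pm\neq 0$. This matters because the boundary terms in the integration by parts~\eqref{e:ibp} are controlled by~\eqref{e:vertical-5}, which is available only at the microlocalization point.

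The paper's remedy is to allow $F$ to be a $C^\infty$-small deformation of zero with $F(x_\pm)=y_\pm$ and $F=0$ near $K_r$; horizontal control then persists because $\Real[(1+iF')\widetilde V_x(x+iF(x))]$ changes only by $O(\varepsilon_r)$. Your treatment of the integral inequality should likewise use $z_+=x_++iy_+$, though the extra term $y_+\Real\tilde\omega_+=O(\varepsilon_r^2)$ is harmless. A minor additional point: taking $x_\pm$ ``slightly larger than $X_0$'' may not guarantee uniqueness of the root of $q=0$ with $\eta<0$; one needs $x_+$ large enough that the leading exponential in $\Delta_r(x_++iy)$ dominates the remainder uniformly in $y$.
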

\begin{proof}
Let us first assume that $a\tilde k=\tilde\mu=\tilde\omega=0$. Then $\tilde\omega_\pm=0$
and $q(y,\eta)=-\eta^2+\Delta_r(x_++iy)$.  Therefore, if we choose
$x_+$ large enough, there exists exactly one solution $(y_0,\eta_0)$
to the equation $q(y,\eta)=0$ with $\eta\leq 0$, and this solution has
$y_0=0$. It is easy to verify that in that case we have vertical
control on $I_+$. Similarly one can choose the point $x_-$; moreover,
we can assume that $K_r\subset (x_-,x_+)$ after the change of
variables $r\to x$. Next, since $\widetilde V_x=\Delta_r$, we can take
$\gamma$ to be the interval~$[x_-,x_+]$ of the real line. The
condition~\eqref{e:exponential-integral-cond} holds because $\eta(x)<0$
for every $x$ and $\tilde\omega_\pm=0$.

Now, fix $x_\pm$ as above.  The parameters of our problem are $a$,
varying in a compact set, $\Lambda$ and $M$, both fixed, and
$a\tilde k,\tilde\mu,\tilde\omega$. By the implicit function theorem, if the last
three parameters are small enough, the (open) conditions of vertical
control and the condition~\eqref{e:exponential-integral-cond}
are still satisfied, yielding $y_\pm$ close to zero. Then one
can take the contour $\gamma$ defined by $\Imag z=F(\Real z)$, where
$F=0$ near $K_r$, $F(x_\pm)=y_\pm$, and $F$ is small in $C^\infty$.
For small values of $a\tilde k,\tilde\mu,\tilde\omega$, we will still have horizontal
control on this $\gamma$, proving the proposition.
\end{proof}

%%%%%%%%%%%%%%%%%%%%%%%%%%%%%%%%%%%%%%%%%%%%%%%%%%%%%%%%%%%%%%%%%%%%%%%%%%%%%%%%
%                             RESONANCE FREE STRIP                             %
%%%%%%%%%%%%%%%%%%%%%%%%%%%%%%%%%%%%%%%%%%%%%%%%%%%%%%%%%%%%%%%%%%%%%%%%%%%%%%%%
\section{Resonance free strip}

In this section, we prove Theorem~\ref{t:resonance-free-strip}.
First of all, by Proposition~\ref{l:smart-contour}, it suffices to prove
\begin{prop}\label{l:resonance-free-radial}
Fix $\delta_r>0$, $\varepsilon_e>0$, and a large constant $C'$.
Then there exist constants $a_0>0$ and $C''$ such that for
$$
\begin{gathered}
|\Real\lambda|+k^2\leq C'|\Real\omega|^2,\
|a|<a_0,\
|\Real\omega|\geq 1/C'',\\
|\Imag\omega|\leq 1/C'',\
|\Imag\lambda|\leq |\Real\omega|/C''
\end{gathered}
$$
we have
$$
\|1_{K_r}R_r(\omega,\lambda,k)1_{K_r}\|_{L^2\to L^2}\leq C''|\omega|^{\varepsilon_e-1}.
$$
\end{prop}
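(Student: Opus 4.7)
The plan is to reduce to a semiclassical resolvent estimate with $h=1/|\Real\omega|$. Setting $\tilde\omega=h\omega$, $\tilde\lambda=h^2\lambda$, and $\tilde k=hk$, the hypotheses ensure that these rescaled parameters lie in a compact set with $|\Real\tilde\omega|=1$ and $\Imag\tilde\omega,\Imag\tilde\lambda=O(h)$. In the notation of Section~6, $\tilde P_x=h^2P_x$ has bounded potential $\widetilde V_x$, and since $R_r=R_x\Delta_r=h^2\tilde R_x\Delta_r$ where $\tilde R_x=\tilde P_x^{-1}$, the stated bound is equivalent to
\[
\|1_{K_x}\tilde R_x 1_{K_x}\|_{L^2\to L^2}=O(h^{-1-\varepsilon_e}),
\]
which is exactly the size expected for a semiclassical resolvent at an energy level supporting at worst normally hyperbolic trapping.

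First I would invoke the forthcoming Proposition~\ref{l:complex-scaling} to deform $\tilde P_x$ near $x=\pm\infty$ into a complex-scaled operator $\tilde P_{x,\theta}\in\Psi^2$ whose semiclassical principal symbol $p_\theta$ is elliptic outside a compact subset of $T^*\mathbb R$ and which agrees with $\tilde P_x$ on $K_x$; outgoing solutions of $\tilde P_x u=f$ with $\supp f\subset K_x$ correspond to $L^2$ solutions of $\tilde P_{x,\theta}u_\theta=f$, so it suffices to bound $\tilde P_{x,\theta}^{-1}:L^2\to L^2$. In the uncomplexified region, $p_\theta=\xi^2+V_0(x;\tilde\omega,\tilde\lambda,\tilde k)$ is the symbol of a classical Schr\"odinger operator whose zero energy surface carries the only possible classical trapping. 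I would then analyze the Hamilton flow of $\Real p_\theta$ on $\{\Real p_\theta=0\}$: either every null bicharacteristic escapes into the elliptic region at infinity, or $V_0$ has a unique nondegenerate maximum $x_*\in(x_-,x_+)$ with $V_0(x_*)=0$.

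In the nontrapping case, the standard escape function construction (as in~\cite{m}) produces $G\in C_c^\infty(T^*\mathbb R;\mathbb R)$ with $H_{p_\theta}G\le -1$ on $\{\Real p_\theta=0\}\cap T^*K_x$; using Proposition~\ref{l:exponentiation} to conjugate $\tilde P_{x,\theta}$ by $e^{tG^w/h}$ adds an imaginary piece $\gtrsim h$ to the symbol on the real characteristic variety, and a positive commutator argument then gives $\|\tilde R_x\|=O(h^{-1})$, which is stronger than needed; this is Proposition~\ref{l:radial-nontrapping}. In the trapping case, $(x_*,0)$ is a hyperbolic fixed point of the Hamilton flow and the trapped set reduces to this single orbit, so it is $r$-normally hyperbolic for every $r$; I would then apply the methods of~\cite{w-z}, constructing an escape function $G\in S_{1/2}$ that strictly decreases along the flow outside an $h^{1/2}$-neighborhood of $(x_*,0)$ together with a weight $h^{\varepsilon_e}$ chosen to dominate the quadratic degeneracy at the trapped set, producing the desired $O(h^{-1-\varepsilon_e})$ bound through a refined positive commutator estimate in the slightly exotic calculus.

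The main obstacle will be executing this analysis uniformly in the three-parameter family $(\tilde\omega,\tilde\lambda,\tilde k)$: the location of $x_*$, the sign and size of $V_0''(x_*)$, and the linearized expansion rate at $(x_*,0)$ all vary with these parameters, and even the nontrapping/trapping dichotomy itself depends on them, so the escape-function constructions and the commutator bounds must be made quantitatively stable over the compact parameter range permitted by the hypotheses. A secondary difficulty flagged in the introduction is that \cite{w-z} regularizes at infinity with an absorbing potential $-iW$, whereas here ellipticity at $x=\pm\infty$ is provided instead by complex scaling; the boundary terms of the positive commutator estimate must therefore be reworked, replacing the absorbing term by the ellipticity of $\Real p_\theta$ outside $K_x$. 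Combining the nontrapping and trapping bounds then yields Proposition~\ref{l:resonance-free-radial}.
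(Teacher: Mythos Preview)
Your proposal is correct and follows essentially the same route as the paper: semiclassical rescaling with $h=(\Real\omega)^{-1}$, complex scaling to an operator elliptic at infinity (Proposition~\ref{l:complex-scaling}), a nontrapping/trapping dichotomy for the real potential $\widetilde V_0$ (Proposition~\ref{l:radial-nontrapping}, which actually distinguishes three cases, the first two being nontrapping), and then the escape-function commutator argument in the nontrapping case (Proposition~\ref{l:nontrapping}) together with the Wunsch--Zworski logarithmic weight in the trapping case (Proposition~\ref{l:trapping}). The only cosmetic difference is in the trapping step: rather than an abstract $S_{1/2}$ escape function plus an $h^{\varepsilon_e}$ weight, the paper writes down the explicit modified escape function $G_1=-\chi_0\log\bigl((\varphi_-^2+h/\tilde h)/(\varphi_+^2+h/\tilde h)\bigr)+C_0\log(1/h)G$ and conjugates by $e^{sG_1^w}$ with $s$ small, the $h^{-\varepsilon_e}$ loss arising from $\|e^{sG_1^w}\|\le h^{-Cs}$.
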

Indeed, we take $C'$ large enough so that $C_k^2(1+|\omega|)^2+L\leq C'|\omega|^2/2$;
then, we put $l_1=L$ and $l_2=|\Real\omega|/C''$.

Next, we reformulate Proposition~\ref{l:resonance-free-radial} in semiclassical
terms. Without loss of generality, we may assume that $\Real\omega>0$. Put
$h=(\Real\omega)^{-1}$ and consider the rescaled operator
$$
\tilde P_x=h^2P_x=h^2D_x^2+(\tilde\lambda+ih\tilde\mu)\Delta_r-(1+\alpha)^2((r^2+a^2)(1+ih\nu)-a\tilde k)^2.
$$
Here $P_x$ is the operator in~\eqref{e:p-x} and
$$
\tilde\lambda=h^2\Real\lambda,\
\tilde k=hk,\
\tilde\mu=h\Imag\lambda,\
\nu=\Imag\omega.
$$
Then it suffices to prove that for $h$ small enough and under the conditions
\begin{equation}\label{e:semiclassical-cond}
|\tilde\lambda|\leq C',\
|\tilde k|\leq C',\
|\tilde \mu|\leq 1/C'',\
|\nu|\leq 1/C'',
\end{equation}
for each $f(x)\in L^2\cap \mathcal E'(K_x)$ and solution $u(x)$
to the equation $\tilde P_x u=f$ which is outgoing in the sense of Definition~\ref{d:outgoing},
we have
\begin{equation}\label{e:semiclassical-res}
\|u\|_{L^2(K_x)}\leq Ch^{-1-\varepsilon_e}\|f\|_{L^2}.
\end{equation}
(Here $K_x$ is the image of $K_r=(r_-+\delta_r,r_+-\delta_r)$ under the change of variables $r\to x$.)
We write $\tilde P_x=h^2D_x^2+\widetilde V_0+ih\widetilde V_1$, where
$$
\begin{gathered}
\widetilde V_0=\tilde\lambda\Delta_r-(1+\alpha)^2(r^2+a^2-a\tilde k)^2,\\
\widetilde V_1=\tilde\mu\Delta_r-\nu(1+\alpha)^2(r^2+a^2)((r^2+a^2)(2+ih\nu)-2a\tilde k)^2.
\end{gathered}
$$
We note that $\widetilde V_0$ is real-valued and
$\|\widetilde V_1\|_{L^\infty}\leq C/C''$
for some global constant $C$.

\smallskip

We now apply the method of complex scaling.
(This method was first developed by Aguilar and Combes in~\cite{a-c}; see~\cite{s-z} and
the references there for more recent developments.)
Consider the contour $\gamma$ in the complex plane given by
$\Imag x=F(\Real x)$, with $F$ defined by
\begin{equation}\label{e:complex-scaling-f}
F(x)=\begin{cases}
0,&|x|\leq R;\\
F_0(x-R),&x\geq R;\\
-F_0(-x-R),&x\leq -R.
\end{cases}
\end{equation}
Here $R>X_0$ is large and $F_0\in C_0^\infty(0,\infty)$ is a fixed function
such that $F'_0\geq 0$ and $F''_0\geq 0$ for all $x$ and $F'_0(x)=1$ for $x\geq 1$.
(We could use a contour which forms an arbitrary fixed angle $\tilde\theta\in (0,\pi/2)$ with
the horizonal axis for large $x$; we choose the angle $\pi/4$ to simplify the
formulas.)
\begin{figure}
\includegraphics{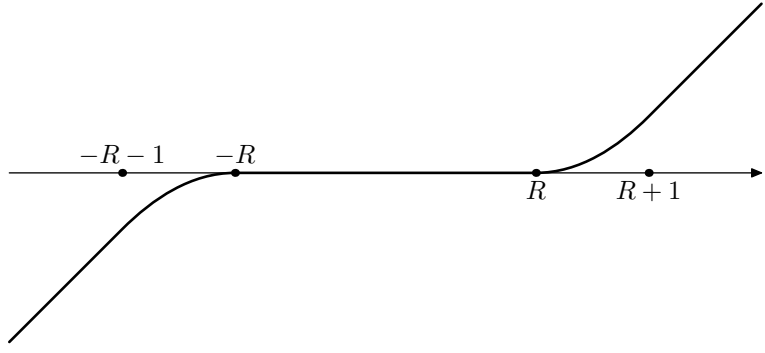}
\caption{The contour used for complex scaling}
\end{figure}
Now, let $u$ be an outgoing solution to the equation $\tilde P_x u=f\in L^2\cap \mathcal E'(K_x)$,
as above.
By Proposition~\ref{l:u-complex}, we can define the restriction $u_\gamma$ of $u$
to $\gamma$ and $\tilde P_\gamma u_\gamma=f$, where
$$
\tilde P_\gamma=\bigg({h\over 1+iF'(x)}D_x\bigg)^2+\widetilde V_0(x+iF(x))+ih\widetilde V_1(x+iF(x)).
$$
Also, for $a$ and $h$ small enough, $u_\gamma$ lies in $H^2(\mathbb R)$.
Therefore, in order to prove~\eqref{e:semiclassical-res}, it is enough to show
that for each $u_\gamma\in H^2(\mathbb R)$, we have
\begin{equation}\label{e:semiclassical-res2}
\|u_\gamma\|_{L^2(\mathbb R)}\leq Ch^{-1-\varepsilon_e}\|\tilde P_\gamma u_\gamma\|_{L^2(\mathbb R)}.
\end{equation}
Let $p_0$ and $p_{\gamma 0}$ be the semiclassical principal symbols of $\tilde P_x$ and
$\tilde P_\gamma$:
$$
\begin{gathered}
p_0(x,\xi)=\xi^2+\widetilde V_0(x),\\
p_{\gamma 0}(x,\xi)={\xi^2\over (1+iF'(x))^2}+\widetilde V_0(x+iF(x)).
\end{gathered}
$$
The key property of the operator $\tilde P_\gamma$, as opposed to $\tilde P_x$,
is ellipticity at infinity, which follows from
the fact that $\widetilde V_0(\pm\infty)=-\tilde\omega_{0\pm}^2$, where
$$
\tilde\omega_{0\pm}=(1+\alpha)(r_\pm^2+a^2-a\tilde k)\geq 1/C>0
$$
if $a$ is small enough. Certain other properties of the symbol $p_{\gamma 0}$
can be derived using only the behavior of $\widetilde V_0$ near infinity given by~\eqref{e:v-x-complex};
we state them for a general class of potentials:
\begin{prop}\label{l:complex-scaling}
Assume that $V(x)$, $x>0$, is a real-valued potential such that
for $x>X_0$, we have
$V(x)=V_+(e^{-A_+x})$ for a certain constant $A_+>0$
and a function $V_+(w)$ holomorphic in $\{|w|<e^{-A_+X_0}\}$;
assume also that $V_+(0)<0$. Let $F(x)$ be as in~\eqref{e:complex-scaling-f}, for $R>X_0$, and put
$$
\begin{gathered}
p(x,\xi)=\xi^2+V(x),\\
p_\gamma(x,\xi)={\xi^2\over (1+iF'(x))^2}+V(x+iF(x)).
\end{gathered}
$$
Then there exists a constant $C_c$ such that 
for $R$ large enough and $\delta>0$ small enough,
\begin{gather}
\label{e:complex-scaling-1}
\text{if }x\geq R+1,\text{ then  }|p_\gamma(x,\xi)|\geq 1/C_c>0,\\
\label{e:complex-scaling-2}
\text{if }|p_\gamma(x,\xi)|\leq e^{-A_+R},\text{ then }\Imag p_\gamma(x,\xi)\leq 0,\\
\label{e:complex-scaling-3}
\text{if }|p_\gamma(x,\xi)|\leq\delta,\text{ then }|p(x,\xi)|\leq C_c\delta,\
|\nabla (\Real p_\gamma-p)(x,\xi)|\leq C_c\delta.
\end{gather}
Similar facts hold if $V$ is defined on $x<0$ instead.
\end{prop}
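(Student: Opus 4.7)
My plan is to treat the three assertions by exploiting the piecewise structure of $F$: the region $[0, R]$ is trivial because $F \equiv 0$ there forces $p_\gamma = p$ (which is real), while claim (1), once established, excludes small-$|p_\gamma|$ points on $[R+1, \infty)$ and so eliminates this region from (2) and (3) as well. All real work is thus confined to the transition zone $[R, R+1]$, where $F'(x)$ increases from $0$ to $1$ and $F(x)$ is small.

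For (1), on $[R+1,\infty)$ one has $F'(x) \equiv 1$, so $(1+iF')^{-2} = -i/2$ and $\Real p_\gamma(x,\xi) = \Real V(x+iF(x))$ is independent of $\xi$. Holomorphy of $V_+$ at $0$ gives $|V(x+iF) - V_+(0)| \leq C e^{-A_+x} \leq C e^{-A_+R}$, and together with $V_+(0) < 0$ this yields $\Real p_\gamma \leq V_+(0)/2 < 0$ for $R$ large. For (2), I use
\begin{equation*}
\Imag p_\gamma = -\frac{2F'(x)\xi^2}{(1+F'(x)^2)^2} + \Imag V(x+iF(x))
\end{equation*}
together with the integral formula $\Imag V(x+iF) = \int_0^{F(x)}\Real V'(x+is)\,ds$ and the asymptotic $V'(x+is) = O(e^{-A_+x})$ to get $|\Imag V(x+iF)| \leq C e^{-A_+R} F(x)$. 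The crucial geometric observation is that since $F(R) = F'(R) = 0$ and $F'$ is nondecreasing on $[R, R+1]$ (by $F'' \geq 0$), one has $F(x) = \int_R^x F'(s)\,ds \leq F'(x)(x-R) \leq F'(x)$. Meanwhile the condition $|p_\gamma| \leq e^{-A_+R}$ combined with $\Real V(x+iF) = V_+(0) + O(e^{-A_+R})$ forces $(1-F'^2)\xi^2/(1+F'^2)^2 \approx |V_+(0)|$, so $F'(x) < 1$ strictly and $\xi^2 \geq c > 0$. Assembling these estimates gives $\Imag p_\gamma \leq F'(x)(Ce^{-A_+R} - c') \leq 0$ for $R$ large.

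For (3), I choose $\delta \leq e^{-A_+R}$ so that the argument of (2) applies and yields $\xi^2 \sim |V_+(0)|$. Feeding this lower bound on $\xi^2$ back into the imaginary-part equation and using $|\Imag V(x+iF)| \leq C e^{-A_+R} F'$ together with $|\Imag p_\gamma| \leq \delta$ sharpens the control to $F'(x) = O(\delta)$ and hence $F(x) = O(\delta)$. A direct expansion then gives
\begin{equation*}
p - p_\gamma = \xi^2\bigl[1 - (1+iF')^{-2}\bigr] + [V(x) - V(x+iF)] = O(F'\xi^2) + O(F) = O(\delta),
\end{equation*}
so $|p| \leq |p_\gamma| + |p-p_\gamma| \leq C_c\delta$. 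For the gradient bound, I write $\Real p_\gamma - p = (A-1)\xi^2 + [\Real V(x+iF) - V(x)]$ with $A = (1-F'^2)/(1+F'^2)^2$, Taylor expand $V$ in the imaginary direction (so that $\Real V(x+iF) - V(x) = O(F^2)$ with $x$-derivative $O(F F') + O(F^2)$), and use $A - 1 = O(F'^2)$, $\partial_x A = O(F' F'')$, to conclude $|\nabla(\Real p_\gamma - p)| = O(\delta)$. The main technical obstacle throughout is the coupled estimate trading control of $F$ for control of $F'$ via the inequality $F \leq F'$, which is what ultimately allows the sharp bound on $\Imag V$ to be absorbed into the sign-definite term arising from complex scaling.
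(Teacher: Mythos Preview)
Your proof is correct and follows essentially the same approach as the paper: localize to the transition zone $[R,R+1]$, use the convexity inequality $F(x)\leq F'(x)$, and show that the sign-definite term $-2F'\xi^2/(1+F'^2)^2$ in $\Imag p_\gamma$ dominates the error $\Imag V(x+iF)=O(e^{-A_+R}F')$. The only organizational difference is that the paper takes the \emph{argument} of the complex relation $\xi^2/(1+iF')^2=-V(x+iF)+O(\delta)$ to obtain $F'(x)=O(\delta)$ directly (and then specializes $\delta=e^{-A_+R}$ for the sign statement), whereas you first separate real and imaginary parts, use the real part to get $\xi^2\geq c$, and only afterwards extract $F'=O(\delta)$ from the imaginary part; both routes are equivalent.
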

\begin{proof} Without loss of generality, we assume that $A_+=1$ and $V_+(0)=-1$. 
First of all, if $x\geq R+1$, then
$$
p_\gamma(x,\xi)=-i\xi^2/2+V(x+iF(x))=-i\xi^2/2-1+O(e^{-R}).
$$
For $R$ large enough, we then get $|p_\gamma(x,\xi)|\geq 1/2$, thus
proving~\eqref{e:complex-scaling-1}.

For the rest of the proof, we may assume that $R\leq x\leq R+1$. Then,
since $F'_0$ is increasing, we get $0\leq F(x)\leq F'(x)$.
Suppose that $|p_\gamma(x,\xi)|\leq \delta$; then
\begin{equation}\label{e:complex-scaling-4}
\begin{gathered}
{\xi^2\over (1+iF'(x))^2}=-V(x+iF(x))+O(\delta)\\
=-V(x)(1+O(\delta+e^{-R}F(x)))=1+O(\delta+e^{-R}).
\end{gathered}
\end{equation}
Taking the arguments of both sides, we get
$$
F'(x)\leq C(\delta+e^{-R}F(x))\leq C\delta+Ce^{-R}F'(x).
$$
Then for $R$ large enough,
$$
|p_\gamma(x,\xi)|\leq\delta\to F'(x)\leq C\delta.
$$
This proves~\eqref{e:complex-scaling-3}, if we note that $F''$ is bounded and
$$
\Real p_\gamma(x,\xi)-p(x,\xi)=\xi^2 G_1(F'(x)^2)+G_2(F(x),x)
$$
for certain smooth functions $G_1$ and $G_2$ that are equal to
zero at $F'=0$ and $F=0$, respectively.

Now, putting $\delta=e^{-R}$ and taking the arguments and then the absolute values
of both sides of~\eqref{e:complex-scaling-4}, we get for $|p_\gamma|\leq\delta$,
$$
F'(x)=O(e^{-R}),\
\xi^2=1+O(e^{-R}).
$$
Therefore,
$$
\Imag p_\gamma(x,\xi)=-2F'(x)+O(e^{-R}(F(x)+F'(x)))=F'(x)(-2+O(e^{-R})),
$$
which proves~\eqref{e:complex-scaling-2}.
\end{proof} 

Now, we study the trapping properties of the Hamiltonian flow of $p_0$ at the zero
energy:
\begin{prop}\label{l:radial-nontrapping}
There exist constants $C_V$ and $\delta_V$ such that for $a$ small enough
and every $\tilde\lambda$, $\tilde k$
satisfying~\eqref{e:semiclassical-cond}, at least one of the three dynamical cases below holds:
\begin{enumerate}
\item $\widetilde V_0\leq-\delta_V$ everywhere;
\item $\{|\widetilde V_0|\leq \delta_V\}=[x_1,x_2]\sqcup [x_3,x_4]$, where
$-C_V\leq x_1<x_2<x_3<x_4\leq C_V$ and $\widetilde V_0'\geq 1/C_V$ on $[x_1,x_2]$,
$\widetilde V_0'\leq -1/C_V$ on $[x_3,x_4]$;
\item $\{\widetilde V_0\geq -\delta_V\}=[x_1,x_2]$ with $|x_j|\leq C_V$,
$\widetilde V_0''\leq -1/C_V$ on $[x_1,x_2]$.
\end{enumerate}
\end{prop}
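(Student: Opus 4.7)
The plan is to rewrite $\widetilde V_0(x) = \Delta_r(r)\bigl(\tilde\lambda - V_*(r)\bigr)$ with
\[
V_*(r;a,\tilde k) = \frac{(1+\alpha)^2(r^2+a^2-a\tilde k)^2}{\Delta_r(r)},
\]
and then classify the three cases according to the position of $\tilde\lambda$ relative to $\min V_*$. First I would analyze $V_*$ at $a=0$, where it reduces to $r^4/\Delta_r$; using the identity $4\Delta_r - r\Delta_r' = 2r(r-3M_0)$ one checks that $V_*$ has a unique nondegenerate minimum at the photon sphere $r=3M_0\in(r_-,r_+)$, with value $V_{\min}^0>0$, and blows up to $+\infty$ at both endpoints. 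Because $|a\tilde k|\le aC'$ is small, $V_*$ is a $C^\infty$-perturbation of $r^4/\Delta_r$ on every fixed compact subinterval of $(r_-,r_+)$ and the endpoint blow-up is preserved. The implicit function theorem then produces a unique nondegenerate minimum $r_*(a,\tilde k)$ close to $3M_0$, with $V_{\min}(a,\tilde k)$ continuous in the parameters and trapped between two positive constants uniformly.

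Next I would fix small positive constants $\delta_1,\delta_2$ and split on $\tilde\lambda$. In \emph{Case 1} ($\tilde\lambda \le V_{\min}-\delta_1$) we have $\tilde\lambda-V_*\le -\delta_1$ everywhere, hence $\widetilde V_0\le-\delta_1\Delta_r$, which is uniformly negative on $K_r$; near the horizons $\widetilde V_0\to -(1+\alpha)^2(r_\pm^2+a^2-a\tilde k)^2$ is also uniformly negative. In \emph{Case 2} ($\tilde\lambda\ge V_{\min}+\delta_2$) the equation $V_*=\tilde\lambda$ has exactly two roots $r^{(1)}<r_*<r^{(2)}$, both staying in a compact subinterval of $(r_-,r_+)$ since $V_*$ blows up at the endpoints and $\tilde\lambda\le C'$; nondegeneracy of $V_*''(r_*)$ yields $|V_*'(r^{(j)})|\ge c(\delta_2)>0$. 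A direct computation in the $x$-variable (using $r'(x)=\Delta_r$) gives $\widetilde V_0'(x^{(j)}) = -\Delta_r(r^{(j)})^2 V_*'(r^{(j)})$, uniformly bounded below in modulus, and choosing $\delta_V$ small yields the two required monotonic intervals. In \emph{Case 3} ($V_{\min}-\delta_1\le\tilde\lambda\le V_{\min}+\delta_2$) the unique critical point of $\widetilde V_0$ lies near $x_* = x(r_*)$, and at $\tilde\lambda=V_{\min}$ a direct computation gives
\[
\widetilde V_0''(x_*) = -\Delta_r(r_*)^3\, V_*''(r_*) < 0,
\]
with a bound away from $0$ uniform in the parameters. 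By continuity in $\tilde\lambda,a,\tilde k$ this negativity persists in a fixed neighborhood of $x_*$, and shrinking $\delta_1,\delta_2,\delta_V$ ensures that $\{\widetilde V_0\ge-\delta_V\}$ is a single interval contained in that neighborhood on which $\widetilde V_0''\le -1/C_V$.

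The main obstacle is the Case~3 bound: one must combine the Taylor expansion of $\widetilde V_0$ in $x$ at its maximum with the uniform nondegeneracy of $V_*''(r_*)$ to obtain the second-derivative bound throughout the whole hump region, and also exclude any additional critical point of $\widetilde V_0$ bifurcating off $r_*$ as $\tilde\lambda$ crosses $V_{\min}$. Both are implicit-function-theorem arguments made uniform by compactness of the parameter set $\{|a|\le a_0,\,|\tilde\lambda|\le C',\,|\tilde k|\le C'\}$, together with the observation that the zero-level curve $\{\widetilde V_0'=0\}$ in $(x,\tilde\lambda)$-space is transverse at $(x_*,V_{\min})$ because $\partial_{\tilde\lambda}\widetilde V_0'(x_*,V_{\min}) = \Delta_r(r_*)\Delta_r'(r_*)\ne 0$.
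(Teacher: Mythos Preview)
Your argument is correct and follows the same overall strategy as the paper --- reduce to $a=0$, locate the unique critical point at the photon sphere $r=3M_0$, and then split into three cases according to the position of $\tilde\lambda$ --- but you choose the reciprocal factorization. The paper writes
\[
\widetilde V_0(x)=G_V(r)\bigl(F_V(r)-\tilde\lambda^{-1}\bigr),\qquad
G_V=\tilde\lambda(1+\alpha)^2(r^2+a^2-a\tilde k)^2,\quad
F_V=\frac{\Delta_r}{(1+\alpha)^2(r^2+a^2-a\tilde k)^2},
\]
so that $F_V=1/V_*$; it then shows directly that at $a=0$ one has $\partial_r F_V\ge 1/C$ for $r\le 3M_0-\varepsilon$, $\partial_r F_V\le -1/C$ for $r\ge 3M_0+\varepsilon$, and $\partial_r^2 F_V\le -1/C$ for $|r-3M_0|\le\varepsilon$, which persist under perturbation. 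The advantage of factoring out the non-vanishing prefactor $G_V$ rather than $\Delta_r$ is that the sign analysis is uniform up to the horizons, so Case~1 follows in one line and the global uniqueness of the critical point is immediate from the sign of $\partial_r F_V$; your route instead requires the separate near-horizon check in Case~1 and an additional compactness argument to rule out stray critical points of $V_*$ away from $r_*$, both of which you sketch correctly. One small slip: the transversality you cite at the end, $\partial_{\tilde\lambda}\widetilde V_0'(x_*,V_{\min})\ne 0$, is not the condition that excludes bifurcation of critical points in $x$; for that you want $\partial_x\widetilde V_0'=\widetilde V_0''\ne 0$, which you have already established via $\widetilde V_0''(x_*)=-\Delta_r(r_*)^3 V_*''(r_*)<0$.
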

\begin{proof}
\begin{figure}
\includegraphics{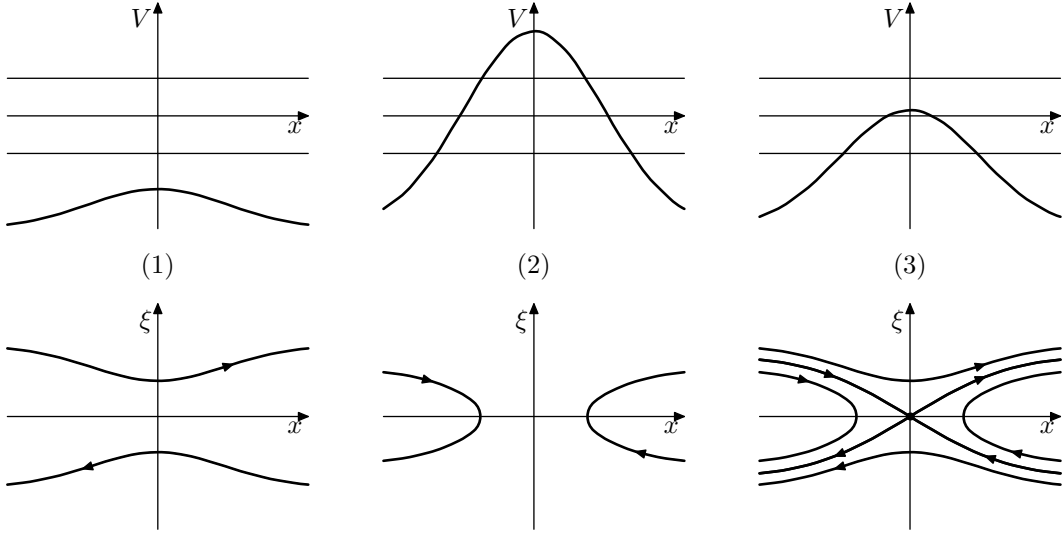}
\caption{Three cases for the potential $\widetilde V_0$ and its Hamiltonian flow
near the zero energy. The horizontal lines correspond to $\widetilde V_0=\pm\delta_V$.}
\end{figure} First of all, if $\tilde\lambda$ is small enough or $\tilde\lambda<0$, then
we have $\widetilde V_0<0$ everywhere and therefore case (1) holds for $\delta_V$ small enough.
Therefore, we may assume that $1/C\leq \tilde\lambda\leq C$ for some constant $C$.
Now, we write
$$
\begin{gathered}
\widetilde V_0(x)=G_V(r)(F_V(r)-\tilde\lambda^{-1}),\\
G_V(r)=\tilde\lambda(1+\alpha)^2(r^2+a^2-a\tilde k)^2,\\
F_V(r)={\Delta_r\over (1+\alpha)^2(r^2+a^2-a\tilde k)^2}.
\end{gathered}
$$
Note that $1/C\leq G_V(r)\leq C$ for $a$ small enough,
some constant $C$, and all $r$.
As for $F_V$, there exists $\varepsilon>0$ such that for $a$ small enough,
$\partial_r F_V(r)\geq 1/C>0$ for $r\leq 3M-\varepsilon$,
$\partial_r F_V(r)\leq -1/C<0$ for $r\geq 3M+\varepsilon$, and
$\partial_r^2 F_V(r)\leq -1/C<0$ for $|r-3M|\leq\varepsilon$. Indeed,
this is true for $a=0$ and follows for small $a$ by a perturbation argument.
Let $r_0\in [3M-\varepsilon,3M+\varepsilon]$ be the point where $F_V$ achieves its maximal value.
Take small $\delta_1>0$; then we have one of the following three cases, each of which in turn
implies the corresponding case in the statement of this proposition:
\begin{enumerate}
\item $F_V(r_0)-\tilde\lambda^{-1}\leq -\delta_1$. Then $\widetilde V_0(x)<-\delta_V$ for all $x$ and
$\delta_V>0$ small enough.
\item $F_V(r_0)-\tilde\lambda^{-1}\geq\delta_1$.
Then for $\delta_2<\delta_1/2$,
$\{|F_V-\tilde\lambda^{-1}|\leq\delta_2\}=[x_1,x_2]\sqcup [x_3,x_4]$, where
$x_2<x_3$, $x_j$ are bounded by a global constant (since $\tilde\lambda$ is bounded from above),
and $\partial_r F_V(r)>1/C_\delta>0$ for $x\in [x_1,x_2]$, $\partial_r F_V(r)<-1/C_\delta<0$ for
$x\in [x_3,x_4]$. Here $C_\delta$ is a constant depending on $\delta_1$,
but not on $\delta_2$. It follows that for $\delta_2$ small enough depending on $\delta_1$, we have
$\widetilde V'_0(x)>0$ for $x\in [x_1,x_2]$ and $\widetilde V'_0(x)<0$ for $x\in [x_3,x_4]$;
also, for $\delta_V$ small enough, we have $\{|\widetilde V_0|\leq\delta_V\}\subset [x_1,x_2]\sqcup [x_3,x_4]$.
\item $|F_V(r_0)-\tilde\lambda^{-1}|<\delta_1$. Then $\{F_V-\tilde\lambda^{-1}>-\delta_1\}=[x_1,x_2]$
with $\partial_r^2 F_V(r)<-1/C<0$ for $x\in [x_1,x_2]$. For $\delta_1$ small enough, we then get
$\widetilde V''_0<-1/C<0$ for $x\in [x_1,x_2]$, and for $\delta_V$ small enough, we have
$\{\widetilde V_0\geq -\delta_V\}\subset [x_1,x_2]$.\qedhere
\end{enumerate}
\end{proof}

We are now ready to prove~\eqref{e:semiclassical-res2} and, therefore, Theorem~\ref{t:resonance-free-strip}.
Fix $R$ large enough so that Proposition~\ref{l:complex-scaling} holds.
The first two cases are in Proposition~\ref{l:radial-nontrapping} are
nontrapping; it follows that there exists
an escape function $G\in C_0^\infty(\mathbb R^2)$ such that
$H_{p_0} G<0$ on $\{|p_0|\leq\delta_V/2\}\cap \{|x|\leq R+2\}$. In the third case, we have
hyperbolic trapping with the trapped set consisting of a single
point $(x_0,0)$, where $x_0$ is the point where $\widetilde V_0$
achieves its maximal value; therefore, there still exists an escape function
$G\in C_0^\infty(\mathbb R^2)$ such that $H_{p_0}G\leq 0$ on $\{|p_0|\leq\delta_V/2\}\cap \{|x|\leq R+2\}$
and $H_{p_0}G<0$ on
$\{|p_0|\leq\delta_V/2\}\cap \{|x|\leq R+2\}\setminus U(x_0,0)$, where $U$ is
a neighborhood of $(x_0,0)$ which can be made arbitrarily small by the choice of $G$
(see~\cite[Proposition~A.6]{g-s}). 
Now, given Proposition~\ref{l:complex-scaling}, we can choose $\delta_0>0$
such that
\begin{equation}\label{e:complex-scaling-1.1}
\Imag p_{\gamma 0}\leq 0\text{ on }\{|p_{\gamma 0}|\leq \delta_0\}
\end{equation}
and for cases (1) and (2) of Proposition~\ref{l:radial-nontrapping}, we have
\begin{equation}\label{e:complex-scaling-1.2}
H_{\Real p_{\gamma 0}}G\leq -1/C<0\text{ on }\{|p_{\gamma 0}|\leq \delta_0\},
\end{equation}
and for case (3) of Proposition~\ref{l:radial-nontrapping}, we have
\begin{equation}\label{e:complex-scaling-1.3}
\begin{gathered}
H_{\Real p_{\gamma 0}}G\leq 0\text{ on }\{|p_{\gamma 0}|\leq\delta_0\},\\
H_{\Real p_{\gamma 0}}G\leq -1/C<0\text{ on }\{|p_{\gamma 0}|\leq\delta_0\}\setminus U(x_0,0).
\end{gathered}
\end{equation}

Armed with these inequalities, we can handle the nontrapping cases even without requiring
that $\mu$ and $\nu$ be small. The statement below follows the method initially developed
in~\cite{m} and is a special case of the results in~\cite[Chapter~6]{d}; however, we choose
to present the proof in our simple case:
\begin{prop}\label{l:nontrapping}
Assume that either case (1) or case (2) of Proposition~\ref{l:radial-nontrapping} holds.
Then for $\tilde\lambda$ and $\tilde k$ bounded by $C'$, $\tilde\mu$ and $\nu$ bounded by
some constant, and $h$ small enough, we have
\begin{equation}\label{e:nontrapping-est}
\|u_\gamma\|_{L^2}\leq Ch^{-1}\|\tilde P_\gamma u_\gamma\|_{L^2}
\end{equation}
for each $u_\gamma\in H^2(\mathbb R)$.
\end{prop}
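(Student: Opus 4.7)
The plan is to combine the elliptic estimate outside the characteristic set $\{p_{\gamma 0}=0\}$ with a positive commutator / conjugation argument driven by the escape function $G$, using the favorable sign of $\Imag p_{\gamma 0}$ from complex scaling to dominate the bounded-but-not-small contribution of $\widetilde V_1$.

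First I would carry out the microlocal splitting and the elliptic reduction. Fix cutoffs $\chi_0,\chi_1\in C_0^\infty(\mathbb R^2)$ with $\chi_0\equiv 1$ near $\{p_{\gamma 0}=0\}$ and $\supp\chi_1\subset\{|p_{\gamma 0}|\leq\delta_0\}$. Outside $\supp\chi_0$ the symbol $p_{\gamma 0}$ is semiclassically elliptic: for large $|\xi|$ this is automatic, for $|x|\geq R+1$ it is \eqref{e:complex-scaling-1}, and for bounded $|\xi|$ and $|x|\leq R+1$ we have $|p_{\gamma 0}|\geq\delta_0/2$ by the choice of $\chi_0$. Proposition~\ref{l:elliptic} then yields
$$\|(1-\chi_0^w)u_\gamma\|_{L^2}\leq C\|\tilde P_\gamma u_\gamma\|_{L^2}+O(h^\infty)\|u_\gamma\|_{L^2},$$
reducing the problem to controlling $\chi_0^w u_\gamma$.

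Next I would conjugate by the escape function and extract an order-$h$ imaginary part. Let $s>0$ be a large real constant (to be fixed, independent of $h$), set $u_s=e^{sG^w}u_\gamma$, and form
$$Q_s=e^{sG^w}\tilde P_\gamma e^{-sG^w}=\tilde P_\gamma+ish(H_{p_{\gamma 0}}G)^w+O_{L^2\to L^2}(h^2)$$
via Proposition~\ref{l:exponentiation}. A direct calculation gives
$$\Imag\sigma(Q_s)=\Imag p_{\gamma 0}+h\bigl(sH_{\Real p_{\gamma 0}}G+\Real\widetilde V_1(x+iF)\bigr)+O(h^2).$$
By \eqref{e:complex-scaling-1.1} the first summand is $\leq 0$ on $\supp\chi_1$, and by \eqref{e:complex-scaling-1.2} together with $\|\widetilde V_1\|_{L^\infty}\leq C_V$ the order-$h$ summand is bounded above by $(-s/C+C_V)h$ there. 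Fixing $s$ so large that $-\Imag\sigma(Q_s)\geq h/C'$ on $\supp\chi_1$ (with $C'$ independent of $h$), a sharp G\r arding-type estimate applied to the self-adjoint operator $-\Imag Q_s$ produces
$$\bigl(-(\Imag Q_s)\chi_0^w u_s,\chi_0^w u_s\bigr)\geq \frac{h}{2C'}\|\chi_0^w u_s\|_{L^2}^2-O(h^2)\|u_s\|_{L^2}^2.$$

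Finally I would pair this lower bound with the trivial upper bound
$$\bigl|\Imag(Q_s u_s,u_s)\bigr|\leq\|Q_s u_s\|_{L^2}\|u_s\|_{L^2}=\|e^{sG^w}\tilde P_\gamma u_\gamma\|_{L^2}\|u_s\|_{L^2}\leq C\|\tilde P_\gamma u_\gamma\|_{L^2}\|u_\gamma\|_{L^2},$$
using $L^2$-boundedness of $e^{\pm sG^w}$ from Proposition~\ref{l:exponentiation}. The cross terms $\Imag(Q_s\chi_0^w u_s,(1-\chi_0^w)u_s)$ and $\Imag(Q_s(1-\chi_0^w)u_s,u_s)$ are handled by applying the elliptic estimate of the first step to $u_s$ (legitimate since $Q_s$ shares the principal symbol of $\tilde P_\gamma$), giving
$$h\|u_\gamma\|_{L^2}^2\leq C\bigl(\|\tilde P_\gamma u_\gamma\|_{L^2}\|u_\gamma\|_{L^2}+h^2\|u_\gamma\|_{L^2}^2\bigr),$$
which after absorbing the $h^2$ term for $h$ small and dividing yields \eqref{e:nontrapping-est}. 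The main obstacle is that the positive part of $-\Imag\sigma(Q_s)$ is only of order $h$, so the favorable escape-function contribution $-sH_{\Real p_{\gamma 0}}G$ must beat the unfavorable, also order-$h$, contribution from $\Real\widetilde V_1$; this forces $s$ to be taken large enough to swallow $C_V$, and it forces us to apply the version of sharp G\r arding from part~1 of Proposition~\ref{l:garding} (with its $O(h)$ loss) to a suitably renormalized symbol, rather than the off-the-shelf part~2 which would demand an $h$-independent positive lower bound.
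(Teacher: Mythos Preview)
Your approach is essentially the same as the paper's: reduce to the microlocal region $\{|p_{\gamma 0}|<\delta_0\}$ by ellipticity, conjugate by $e^{sG^w}$, take the imaginary part of the quadratic form, and choose $s$ large so that the favorable term $shH_{\Real p_{\gamma 0}}G$ dominates both the $O(h)$ G\r arding error from $\Imag p_{\gamma 0}\leq 0$ and the $O(h)$ contribution of $\widetilde V_1$. The paper carries this out with cutoffs applied \emph{before} conjugation (setting $u_{\gamma,s}=e^{sG^w}\chi^w u_\gamma$ and using a second cutoff $\chi_1$ with $\chi_1=1$ on $\supp\chi$, so that $(1-\chi_1^w)u_{\gamma,s}=O(h^\infty)$ by disjoint supports), which makes the cross terms immediate rather than requiring an elliptic estimate for $u_s$.

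One point deserves care. Your displayed G\r arding step
$$\bigl(-(\Imag Q_s)\chi_0^w u_s,\chi_0^w u_s\bigr)\geq \frac{h}{2C'}\|\chi_0^w u_s\|^2-O(h^2)\|u_s\|^2$$
is not what sharp G\r arding (Proposition~\ref{l:garding}, part~1) applied to the full symbol of $-\Imag Q_s$ actually gives: the symbol is only $\geq h/C'$, so the inequality yields a lower bound of the form $-C(s)h$ with an $s$-dependent constant coming from the seminorms of the conjugated symbol, and the argument becomes circular. The way out---which is what the paper does and what your closing paragraph gestures at---is to split $\Imag Q_s$ into $\Imag\tilde P_{\gamma 0}$, $h\Real\widetilde V_1$, and $sh(H_{\Real p_{\gamma 0}}G)^w$ \emph{before} applying G\r arding: apply part~1 to $\Imag\tilde P_{\gamma 0}$ alone (error $Ch$ with $C$ independent of $s$), apply part~2 to $(H_{\Real p_{\gamma 0}}G)^w$ alone (gain $\varepsilon$ independent of $s$), and bound the $\widetilde V_1$ term trivially by $\|\widetilde V_1\|_{L^\infty}$. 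Then the total is $\leq h(C+\|\widetilde V_1\|_{L^\infty}-s\varepsilon)\|\cdot\|^2+O_s(h^2)$, and taking $s$ large wins. Make this decomposition explicit and your argument goes through.
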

\begin{proof}
Take $\chi\in C_0^\infty(\mathbb R^2)$ such that
$\supp\chi\subset \{|p_{\gamma 0}|<\delta_0\}$, but $\chi=1$ near $\{p_{\gamma 0}=0\}$.
Next, take $s>0$, to be chosen later, and put
$$
\tilde P_{\gamma,s}=e^{s G^w}\tilde P_\gamma e^{-s G^w},\
u_{\gamma,s}=e^{s G^w}\chi^w u_\gamma.
$$
Take $\chi_1\in C_0^\infty(\mathbb R^2)$ supported in $\{|p_{\gamma 0}|<\delta_0\}$, but such that
$\chi_1=1$ near $\supp\chi$. Then by part~1 of Proposition~\ref{l:exponentiation}
and~\eqref{e:semiclassical-disjoint-support},
\begin{equation}\label{e:nontrapping-est-2}
\|(1-\chi_1^w)u_{\gamma,s}\|=O(h^\infty)\|u_\gamma\|.
\end{equation}
(In the proof of the current proposition, as well as the next one, we
only use $L^2$ norms.)
Also, for some $s$-dependent constant $C$,
$$
C^{-1}\|\chi^w u_\gamma\|\leq\|u_{\gamma,s}\|\leq C\|u_\gamma\|.
$$
Now, by part~2 of Proposition~\ref{l:exponentiation}, we have
$$
\tilde P_{\gamma,s}=\tilde P_{\gamma 0}+ihV_1+ish(H_{p_{\gamma,0}}G)^w+O(h^2).
$$
Here $\tilde P_{\gamma 0}$ is the principal part of $\tilde P_\gamma$ (without $V_1$)
and the constant in $O(h^2)$ depends on $s$. We then have
$$
\begin{gathered}
\Imag(\tilde P_{\gamma,s}\chi_1^w u_{\gamma,s},\chi_1^w u_{\gamma,s})
=\Imag(\tilde P_{\gamma 0}\chi_1^w u_{\gamma,s},\chi_1^w u_{\gamma,s})
+h\Real(V_1\chi_1^w u_{\gamma,s},\chi_1^w u_{\gamma,s})\\
+sh((H_{\Real p_{\gamma,0}}G)^w \chi_1^w u_{\gamma,s},\chi_1^w u_{\gamma,s})
+O(h^2)\|\chi_1^w u_{\gamma,s}\|^2.
\end{gathered}
$$
By~\eqref{e:complex-scaling-1.1} and part~1 of Proposition~\ref{l:garding},
$$
\Imag(\tilde P_{\gamma 0}\chi_1^w u_{\gamma,s},\chi_1^w u_{\gamma,s})
\leq C h\|\chi_1^w u_{\gamma,s}\|^2+O(h^\infty)\|u_\gamma\|^2.
$$
Next, by~\eqref{e:complex-scaling-1.2} and part~2 of Proposition~\ref{l:garding},
$$
((H_{\Real p_{\gamma,0}}G)^w\chi_1^w u_{\gamma,s},\chi_1^w u_{\gamma,s})
\leq -C^{-1} \|\chi_1^w u_{\gamma,s}\|^2+O(h^\infty)\|u_\gamma\|^2.
$$
Adding these up, we get
$$
\Imag(\tilde P_{\gamma,s}\chi_1^w u_{\gamma,s},\chi_1^w u_{\gamma,s})
\leq -h(C_1^{-1}s-C_1-O(h))\|\chi_1^w u_{\gamma,s}\|^2
+O(h^\infty)\|u_\gamma\|^2.
$$
Here the constants in $O(\cdot)$ depend on $s$, but the constant $C_1$ does not.
Therefore, if we choose $s$ large enough and $h$-independent,
then for small $h$ we have the estimate
$$
\|\chi_1^w u_{\gamma,s}\|^2\leq Ch^{-1}\|\tilde P_{\gamma,s}\chi_1^w u_{\gamma,s}\|
\cdot \|\chi_1^w u_{\gamma,s}\|+O(h^\infty)\|u_\gamma\|^2.
$$
Together with~\eqref{e:nontrapping-est-2}, this gives
$$
\|\chi^w u_\gamma\|^2\leq Ch^{-1}\|\tilde P_\gamma u_\gamma\|\cdot \|u_\gamma\|
+Ch^{-1}\|[\tilde P_\gamma,\chi^w]u_\gamma\|\cdot\|u_\gamma\|+O(h^\infty)\|u_\gamma\|^2.
$$
Applying Proposition~\ref{l:elliptic} to estimate $(1-\chi^w)u_\gamma$ and
the commutator term above, we get the estimate~\eqref{e:nontrapping-est}.
\end{proof}
\noindent\textbf{Remark.} The method described above can actually be used
to obtain a \textbf{logarithmic} resonance free region; however, since we expect
the resonances generated by trapping to lie asymptotically on a lattice as
in~\cite{sb-z}, we only go a fixed amount deep into the complex plane.

The third case in Proposition~\ref{l:radial-nontrapping}
is where trapping occurs, and we analyse it as in~\cite{w-z}:
(See also~\cite{b-f-r-z} for a different method of solving the same problem.)
\begin{prop}\label{l:trapping}
Assume that case (3) in Proposition~\ref{l:radial-nontrapping} holds, and fix
$\varepsilon_e>0$.
Then for $\tilde\lambda$ and $\tilde k$ bounded by $C'$ and for
$\tilde\mu,\nu,h$ small enough, we have
\begin{equation}\label{e:trapping-est}
\|u_\gamma\|_{L^2}\leq Ch^{-1-\varepsilon_e} \|\tilde P_\gamma u_\gamma\|_{L^2}
\end{equation}
for each $u_\gamma\in H^2(\mathbb R)$.
\end{prop}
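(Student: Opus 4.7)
The plan is to adapt the Wunsch--Zworski analysis~\cite{w-z} of normally hyperbolic trapping to our one-dimensional setting, where the trapped set of the flow of $\Real p_{\gamma 0}$ on $\{p_{\gamma 0}=0\}$ reduces, in case~(3) of Proposition~\ref{l:radial-nontrapping}, to the single hyperbolic fixed point $(x_0,0)$ with $x_0\in(x_1,x_2)$ the maximum point of $\widetilde V_0$. The linearization of $H_{p_0}$ at $(x_0,0)$ has eigenvalues $\pm\sqrt{-2\widetilde V_0''(x_0)}\in\mathbb R$. On any fixed neighborhood of $\{p_{\gamma 0}=0\}$ disjoint from a small neighborhood $U$ of $(x_0,0)$, the escape function $G$ from~\eqref{e:complex-scaling-1.3} still satisfies $H_{\Real p_{\gamma 0}}G\leq -1/C$, so the argument of Proposition~\ref{l:nontrapping} goes through microlocally outside $U$ and produces the bound $\|(1-\chi_U^w)u_\gamma\|\leq Ch^{-1}\|\tilde P_\gamma u_\gamma\|$ for any cutoff $\chi_U$ supported in $U$. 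The whole difficulty is thus to obtain a replacement bound of size $h^{-1-\varepsilon_e}$ for the microlocal mass of $u_\gamma$ inside $U$.

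To handle $U$, I would follow~\cite{w-z} and add a second, microlocal, weight $G_1$ in the exotic symbol class $S_\delta^0$ for some $\delta<1/2$ arbitrarily close to $1/2$. After a symplectic change of coordinates near $(x_0,0)$ putting the quadratic part of $p_0$ in hyperbolic normal form $q_s\eta_s$ (so that $H_{p_0}q_s=q_s$, $H_{p_0}\eta_s=-\eta_s$ to leading order), the Wunsch--Zworski weight has the schematic form
\begin{equation*}
G_1=\tfrac{\varepsilon_e}{4}\chi(q_s,\eta_s)\bigl[\log(\eta_s^2+h^{1-2\delta})-\log(q_s^2+h^{1-2\delta})\bigr],
\end{equation*}
where $\chi$ is a cutoff equal to $1$ near the origin. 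Then $\|G_1\|_{L^\infty}=O(\log(1/h))$ and a direct computation gives
\begin{equation*}
H_{p_0}G_1=-\tfrac{\varepsilon_e}{2}\chi\!\left(\frac{\eta_s^2}{\eta_s^2+h^{1-2\delta}}+\frac{q_s^2}{q_s^2+h^{1-2\delta}}\right)+O(h^{1-2\delta}),
\end{equation*}
which is bounded above by $-\varepsilon_e/C$ outside an $h^{(1-2\delta)/2}$-neighborhood of the trapped set; inside that neighborhood one uses the standard $S_\delta$ ellipticity together with Proposition~\ref{l:elliptic}, which is applicable because the whole ball is microlocally trivial in the $S_\delta$ calculus. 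Running the conjugation argument of Proposition~\ref{l:nontrapping} with $e^{sG^w+G_1^w}$ in place of $e^{sG^w}$, one obtains $\|e^{G_1^w}\chi_1^w u_\gamma\|\leq Ch^{-1}\|\tilde P_\gamma u_\gamma\|$, using the smallness of $|\tilde\mu|,|\nu|$ to absorb the skew-adjoint contribution $h\widetilde V_1$ into the main gain. Since $e^{\pm G_1^w}$ has $L^2$ operator norm $O(h^{-\varepsilon_e})$ after adjusting the constant in the definition of $G_1$, stripping off the exponential weight yields~\eqref{e:trapping-est}.

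The main obstacle is the careful management of the exotic calculus. Propositions~\ref{l:garding} and~\ref{l:exponentiation} must be extended to symbols in $S_\delta^0$, where composition leaves a remainder of order $h^{1-2\delta}$ instead of $h$ and the conjugation formula in Proposition~\ref{l:exponentiation}(2) carries an $O(h^{2-2\delta})$ remainder. These extensions are standard~\cite{e-z}, but one must verify that the remainder is strictly smaller than the principal gain $h\cdot(\varepsilon_e/C)$ produced by $H_{p_0}G_1$, which forces $\delta<1/2$ strictly and is the origin of the unavoidable $h^{-\varepsilon_e}$ loss. A secondary technical point is that the complex-scaling contour $\gamma$ is flat near $(x_0,0)$ since $|x_0|<R$, so $p_{\gamma 0}=p_0$ is real on the support of $G_1$ and the normal-form construction is untouched by the scaling; meanwhile the ellipticity~\eqref{e:complex-scaling-1} and the sign condition~\eqref{e:complex-scaling-1.1} continue to handle the outgoing tails at infinity exactly as in the nontrapping case.
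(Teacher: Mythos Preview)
Your overall strategy is the same as the paper's: build the Wunsch--Zworski logarithmic weight along the stable/unstable directions at the hyperbolic fixed point $(x_0,0)$, add it to the global escape function $G$ from~\eqref{e:complex-scaling-1.3}, and run the conjugated positive-commutator argument of Proposition~\ref{l:nontrapping}. The paper does this with explicit functions $\varphi_\pm(x,\xi)=\xi\mp\tilde\xi(x)$, $\tilde\xi(x)=\sgn(x-x_0)\sqrt{\widetilde V_0(x_0)-\widetilde V_0(x)}$, rather than a normal form, and with a second parameter $\tilde h$ in place of your $h^{1-2\delta}$; these are cosmetic differences.

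There is, however, a genuine gap at the step you flag as ``inside that neighborhood one uses the standard $S_\delta$ ellipticity together with Proposition~\ref{l:elliptic}.'' No ellipticity is available there: $p_{\gamma 0}(x_0,0)=0$, and the conjugated symbol $p_0+ishH_{p_0}G_1$ still vanishes at $(x_0,0)$ since your own formula shows $H_{p_0}G_1(x_0,0)=0$. The ball of radius $h^{(1-2\delta)/2}$ is not ``below resolution'' of the $S_\delta$ calculus when $\delta$ is close to $1/2$ (in fact it has size close to $1$), so the claim that it is ``microlocally trivial'' is not correct either. What actually produces a gain at the trapped point is the \emph{uncertainty principle} lower bound of~\cite[Section~4.2]{w-z}: because $\{\varphi_+,\varphi_-\}=c(x_0,0)>0$, the quantizations of $\varphi_+$ and $\varphi_-$ cannot both be small, and the operator with symbol $\chi_0 c\bigl(\varphi_-^2/(\varphi_-^2+h/\tilde h)+\varphi_+^2/(\varphi_+^2+h/\tilde h)\bigr)$ is bounded below by $C^{-1}\tilde h$ on functions microlocalized near $(x_0,0)$, not merely by $0$. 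This is the estimate the paper quotes, and it is the heart of the argument; without it you only get $H_{p_0}G_1\leq 0$ near the trapped set, which is not enough to absorb the $h\widetilde V_1$ term.

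A related point: the gain you obtain from $G_1$ near $(x_0,0)$ is of size $\tilde h$ (equivalently $h^{1-2\delta}$), not of size $\varepsilon_e/C$. This is exactly why $\tilde\mu,\nu$ must be small: one needs $\|\widetilde V_1\|_{L^\infty}\ll\tilde h$ after fixing $\tilde h$. Your sketch asserts an $O(1)$ gain, which would make the smallness hypothesis on $\tilde\mu,\nu$ unnecessary; the correct bookkeeping is that the principal negative term is $-C^{-1}s\tilde h\, h\|\chi_1^w u_{\gamma,s}\|^2$, competing against $h\|\widetilde V_1\|_{L^\infty}\|\chi_1^w u_{\gamma,s}\|^2$.
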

\begin{proof} First, we establish~\cite[Lemma~4.1]{w-z} in our case.
Let $x_0$ be the point where $\widetilde V_0$ achieves its maximum value.
We may assume that $|p_0(x_0,0)|=|\widetilde V_0(x_0)|<\delta_0/2$; otherwise,
we are in one of the two nontrapping cases.
Put
$$
\tilde\xi(x)=\sgn(x-x_0)\sqrt{\widetilde V_0(x_0)-\widetilde V_0(x)};
$$
since $\widetilde V''_0(x_0)<0$, it is a smooth function. Then, define the functions
$\varphi_\pm(x,\xi)=\xi\mp\tilde\xi(x)$.
We have
$$
H_{p_0}\varphi_\pm(x,\xi)=\mp c(x,\xi) \varphi_\pm(x,\xi),
$$
where $c(x,\xi)=2 \partial_x\tilde\xi(x)$
is greater than zero near the trapped point $(x_0,0)$. Also,
$\{\varphi_+,\varphi_-\}=c(x,\xi)$.
Next, take $\tilde h>h$ and large $C_0>0$, let $\chi_0\geq 0$ be supported in a small neighborhood
of $(x_0,0)$ with $\chi_0=1$ near this point, and define the modified escape function~\cite[(4.6)]{w-z}
$$
G_1(x,\xi)=-\chi_0(x,\xi)\log{\varphi_-^2(x,\xi)+h/\tilde h\over
\varphi_+^2(x,\xi)+h/\tilde h}+C_0\log(1/h)G(x,\xi).
$$
Here $G$ is an escape function satisfying~\eqref{e:complex-scaling-1.3}.
We can write
\begin{equation}\label{e:wz-h-p}
\begin{gathered}
H_{\Real p_{\gamma,0}} G_1=-2\chi_0 c\bigg({\varphi_-^2\over\varphi_-^2+h/\tilde h}
+{\varphi_+^2\over\varphi_+^2+h/\tilde h}\bigg)\\
-(H_{p_0}\chi_0)\log{\varphi_-^2+h/\tilde h\over\varphi_+^2+h/\tilde h}
+C_0\log(1/h)H_{\Real p_{\gamma 0}} G(x,\xi).
\end{gathered}
\end{equation}
Take $\chi_1$ supported in $\{|p_{\gamma 0}|<\delta_0\}$, but equal to 1 near $\{p_{\gamma 0}=0\}$.
Then one can use the uncertainty principle~\cite[Section~4.2]{w-z}
to show that if $\chi_2$ is
supported inside $\{\chi_0=1\}$, but $\chi_2=1$ near $(x_0,0)$, then
for each $v\in L^2$,
$$
\begin{gathered}
((H_{\Real p_{\gamma, 0}}G_1)^w\chi_1^w v,\chi_1^w v)
\leq (-C^{-1}\tilde h+O(\tilde h^2))\|\chi_2^w v\|^2
+O(\log(1/h))\|(1-\chi_2^w)\chi_1^w v\|^2\\
-C_0C^{-1}\log(1/h)\|(1-\chi_2^w)\chi_1^w v\|^2
+O(C_0h\log(1/h))\|\chi_1^w v\|^2
+O(h^\infty)\|v\|^2\\
\leq -(C^{-1}\tilde h-O(\tilde h^2+C_0 h\log(1/h)))\|\chi_2^w v\|^2\\
-(C_0 C^{-1}\log(1/h)-O(C_0h\log(1/h)+\log(1/h)))\|(1-\chi_2^w)\chi_1^w v\|^2
+O(h^\infty)\|v\|^2.
\end{gathered} 
$$
If we fix $C_0$ large enough and $\tilde h$ small enough and assume that $h$ small enough, then
$$
\begin{gathered}
((H_{\Real p_{\gamma, 0}}G_1)^w\chi_1^w v,\chi_1^w v)
\leq -C^{-1}\log(1/h)\|(1-\chi_2^w)\chi_1^w v\|^2
-C^{-1}\tilde h \|\chi_1^w v\|^2
+O(h^\infty)\|v\|^2.
\end{gathered}
$$ 
Next, we conjugate by exponential pseudodifferential weights. First of all, one can prove that
$$
\|G_1^w\|_{L^2\to L^2}\leq C\log(1/h);
$$
therefore,
$$
\|e^{sG_1^w}\|_{L^2\to L^2}\leq h^{-C|s|}.
$$
Let $\chi$ be supported in $\{\chi_1=1\}$, but $\chi=1$ near $\{p_{\gamma 0}=0\}$, and
$$
P_{\gamma,s}=e^{sG_1^w}P_\gamma e^{-sG_1^w},\
u_{\gamma,s}=e^{sG_1^w}\chi^w u_\gamma;
$$
then~\cite[Section~4.3]{w-z}
$$
P_{\gamma,s}=P_\gamma+ish(H_{p_{\gamma 0}}G_1)^w+O(s^2\tilde h h+sh^{3/2}\tilde h^{3/2}+h^2).
$$
Therefore, since $\Imag p_{\gamma 0}=0$ near $\supp\chi_2$,
$$
\begin{gathered}
\Imag(\tilde P_{\gamma,s}\chi_1^w u_{\gamma,s},\chi_1^w u_{\gamma,s})
=\Imag(\tilde P_{\gamma0}\chi_1^w u_{\gamma,s},\chi_1^w u_{\gamma,s})
+h\Real(V_1\chi_1^w u_{\gamma,s},\chi_1^w u_{\gamma,s})\\
+sh\Real((H_{\Real p_{\gamma,0}}G_1)^w\chi_1^w u_{\gamma,s},\chi_1^w u_{\gamma,s})
+O(s^2h\tilde h+sh^{3/2}\tilde h^{3/2}+h^2)\|\chi_1^w u_{\gamma,s}\|^2\\
\leq O(h)\|(1-\chi_2^w)\chi_1^w u_{\gamma,s}\|^2+h\|V_1\|_{L^\infty}\|\chi_1^w u_{\gamma,s}\|^2
-C^{-1}sh\log(1/h)\|(1-\chi_2^w)\chi_1^w u_{\gamma,s}\|^2\\
-C^{-1}sh\tilde h\|\chi_1^w u_{\gamma,s}\|^2
+O(s^2h\tilde h+sh^{3/2}\tilde h^{3/2}+h^2)\|\chi_1^w u_{\gamma,s}\|^2
+O(h^\infty)\|u_\gamma\|^2.
\end{gathered}
$$
Here $\tilde P_{\gamma 0}$ is the principal part of $\tilde P_\gamma$, as before.
If we choose $s$ small enough independently of $h$, then for small $h$,
$$
\begin{gathered}
\Imag(\tilde P_{\gamma,s}\chi_1^w u_{\gamma,s},\chi_1^w u_{\gamma,s})
\leq -C_1sh\log(1/h)\|(1-\chi_2^w)\chi_1^w u_{\gamma,s}\|^2\\
-h(C^{-1}s\tilde h-\|V_1\|_{L^\infty})\|\chi_1^w u_{\gamma,s}\|^2
+O(h^\infty)\|u_\gamma\|^2.
\end{gathered}
$$
Now, $\|V_1\|_{L^\infty}$ can be made very small by choosing $\tilde\mu$ and $\nu$ small enough.
Then, we get
$$
\|\chi^w u_\gamma\|^2\leq Ch^{-1-Cs}\|u_\gamma\|\cdot\|P_\gamma\chi^w u_\gamma\|
+O(h^\infty)\|u_\gamma\|^2.
$$
By proceeding as in the end of Proposition~\ref{l:nontrapping}, we get~\eqref{e:trapping-est},
provided that $s$ is small enough. 
\end{proof}

%%%%%%%%%%%%%%%%%%%%%%%%%%%%%%%%%%%%%%%%%%%%%%%%%%%%%%%%%%%%%%%%%%%%%%%%%%%%%%%%
%                               ACKNOWLEDGEMENTS                               %                                
%%%%%%%%%%%%%%%%%%%%%%%%%%%%%%%%%%%%%%%%%%%%%%%%%%%%%%%%%%%%%%%%%%%%%%%%%%%%%%%%
\noindent\textbf{Acknowledgements.}
I would like to thank Maciej Zworski for suggesting the problem, lots
of helpful advice, and encouragement, and Kiril Datchev, Mihai Tohaneanu,
Daniel Tataru, and Tobias Schottdorf for some very helpful discussions.
I am also grateful for partial support from NSF grant~DMS-0654436.
Finally, I am especially thankful to an anonymous referee for many
suggestions to improve the manuscript.

%%%%%%%%%%%%%%%%%%%%%%%%%%%%%%%%%%%%%%%%%%%%%%%%%%%%%%%%%%%%%%%%%%%%%%%%%%%%%%%%
%                                  REFERENCES                                  %
%%%%%%%%%%%%%%%%%%%%%%%%%%%%%%%%%%%%%%%%%%%%%%%%%%%%%%%%%%%%%%%%%%%%%%%%%%%%%%%%

\end{document}